\pgfplotsset{width=8.5cm}
\pgfplotsset{compat=1.8}
\newtheorem{THM}{THEOREM}[section]
\newtheorem{theorem}[THM]{Theorem}
\newtheorem{lemma}[THM]{Lemma}
\newtheorem{claim}[THM]{Claim}
\newtheorem{corollary}[THM]{Corollary}
\theoremstyle{definition}
\newtheorem{remark}{Remark}[section]
\newtheorem{problem}{Problem}   % number problems independently
\newcommand{\Input}{\item[\textbf{Input:}]}
\newcommand{\Output}{\item[\textbf{Output:}]}
\tikzset{draw half paths/.style 2 args={%
    decoration={show path construction,
      lineto code={
        \draw [#1] (\tikzinputsegmentfirst) --
        ($(\tikzinputsegmentfirst)!0.2!(\tikzinputsegmentlast)$);
        \draw [#2] ($(\tikzinputsegmentfirst)!0.2!(\tikzinputsegmentlast)$)
        -- (\tikzinputsegmentlast);
      }
    }, decorate
  }}
\tikzset{
  my box/.style = {
    , line cap = round
    , line join = round
  }
}
\newcommand{\highlight}[3]{
  \path [my box, line width = #1, draw = #2, transparency group, opacity=1] #3;
}
\DeclareRobustCommand{\cev}[1]{%
  {\mathpalette\do@cev{#1}}%
}
\newcommand{\FBOX}{\hspace*{\fill}$\rule{0.19cm}{0.19cm}$}
\newenvironment{subproof}[1][\proofname]{%
  \begin{proof}[#1]%
  }{%
  \end{proof}%
}
\def\R{\mathbb R}
\def\Z{\mathbb Z}
\def\E{\mathbb E}
\def\1{\mathbb 1}
\DeclareMathOperator*{\argmin}{arg\,min}
\DeclareMathOperator{\poly}{poly}
\def\niceArrow{-{Stealth[length=2.25mm]}}
\def\NiceArrow{-{Stealth[length=2.5mm]}}
\DeclareRobustCommand{\cev}[1]{%
  {\mathpalette\do@cev{#1}}%
}
\newcommand{\do@cev}[2]{%
  \vbox{\offinterlineskip
    \sbox\z@{$\m@th#1 x$}%
    \ialign{##\cr
      \hidewidth\reflectbox{$\m@th#1\vec{}\mkern0mu$}\hidewidth\cr
      \noalign{\kern-\ht\z@}
      $\m@th#1#2$\cr
    }
  }
}
\newcolumntype{x}[1]{>{\centering\let\newline\\\arraybackslash\hspace{0pt}}m{#1}}
\tikzset{wavy/.style={decorate,decoration={snake,amplitude=.4mm,segment length=2mm,post length=0mm,pre length=0mm},line width=.5}}
\tikzset{draw third paths/.style 2 args={%
    decoration={show path construction,
      lineto code={
        \draw [#1] (\tikzinputsegmentfirst) -- ($(\tikzinputsegmentfirst)!0.05!(\tikzinputsegmentlast)$);
        \draw [#2] ($(\tikzinputsegmentfirst)!0.05!(\tikzinputsegmentlast)$) -- ($(\tikzinputsegmentfirst)!0.95!(\tikzinputsegmentlast)$);
      }
    }, decorate
  }}
\tikzset{paralleledge/.style={to path={
      \pgfextra{%
        \pgfmathsetmacro{\startf}{-(#1-1)/2}
        \pgfmathsetmacro{\endf}{-\startf}
        \pgfmathsetmacro{\stepf}{\startf+1}}
      \ifnum 1=#1 -- (\tikztotarget)  \else
        let \p{mid}=($(\tikztostart)!0.5!(\tikztotarget)$)
        in
        \foreach \i in {\startf,\stepf,...,\endf}
        {%
          (\tikztostart) .. controls ($ (\p{mid})!\i*6pt!90:(\tikztotarget) $) .. (\tikztotarget)
        }
      \fi
      \tikztonodes
    }
  }
}
\tikzset{circleAroundEdges/.style n args={3}{
    decorate, decoration = {markings, mark=at position #3 with {\draw[double=black,white,double distance=.6pt] (0,#2) arc [x radius = #1, y radius = #2, start angle = 90, end angle = -125];} , mark=at position #3 with{\draw[double=black,white,double distance=.6pt] (0,-#2) arc [x radius = #1, y radius = #2, start angle = 270, end angle = 360+125];}}
  }
}
\begin{document}

\title{Separable convex optimization over indegree polytopes
}

\author{N\'ora A.\ Borsik \thanks{Department of Operations Research, E\"otv\"os Lor\'and University, P\'azm\'any P.\ s.\ 1/c, Budapest, Hungary. E-mail: {\tt nborsik@gmail.com}}
  \and P\'eter Madarasi \thanks{HUN-REN Alfr\'ed R\'enyi Institute of Mathematics, and Department of Operations Research, E\"otv\"os Lor\'and University, P\'azm\'any P.\ s.\ 1/c, Budapest, Hungary. E-mail: {\tt madarasip@staff.elte.hu} (corresponding author)}
}

\date{\vspace*{-10pt}}

\maketitle

\begin{abstract}
  We study egalitarian (acyclic) orientations of undirected graphs under indegree-based objectives, such as minimizing the $\varphi$\nobreakdash-sum of indegrees for a strictly convex function $\varphi$, decreasing minimization (dec-min), and increasing maximization (inc-max).
  In the non-acyclic setting of Frank and Murota (2022), a single orientation simultaneously optimizes these three objectives, however, restricting to acyclic orientations confines us to the corners of the indegree polytope, where these fairness objectives do diverge.

  We establish strong hardness results across a broad range of settings: minimizing the $\varphi$\nobreakdash-sum of indegrees is NP-hard for \emph{every} discrete strictly convex function $\varphi$; dec-min and inc-max are NP-hard for every indegree bound $k \geq 2$, as well as without a bound; and the complementary inc-min and dec-max problems are NP-hard even on $3$\nobreakdash-regular graphs.

  On the algorithmic side, we give a polynomial-time algorithm for minimizing the maximum weighted indegree via a weighted smallest-last ordering.
  We also provide an exact exponential-time algorithm for minimizing general separable discrete convex objectives over indegrees, and a polynomial-time algorithm for the non-acyclic case.
  Finally, for maximizing the sum of the products of indegrees and outdegrees, we prove NP-hardness on graphs of maximum degree~$4$, give an algorithm for maximum degree $3$, and provide a $3$\nobreakdash-approximation algorithm.
  Our results delineate the algorithmic frontier of convex integral optimization over indegree (base-)polytopes, and highlight both theoretical consequences and practical implications, notably for scheduling and deadlock-free routing.
  \\

  \noindent{\bf Keywords:} graph orientation, egalitarian orientations, vertex ordering, decreasing minimization, base polyhedron, NP-hardness, approximation algorithms
\end{abstract}

\section{Introduction}\label{sec:introduction}

This paper considers the minimization of a separable discrete convex objective over the indegree vectors of the (acyclic) orientations of an undirected graph, in particular, we investigate the following problem.

\begin{problem}[$\min\sum_{v\in V} \varphi_v(\varrho(v))$ orientation]\label{prob:sumPhi_VRhoV}
  Given a loop-free multigraph $G = (V, E)$ and, for each $v \in V$, a discrete convex function $\varphi_v: \Z_+ \to \R$ (i.e., a function satisfying $\varphi_v(z+1) + \varphi_v(z-1) \geq 2\varphi_v(z)$ for every positive integer $z$), find an orientation that minimizes $\sum_{v \in V} \varphi_v(\varrho(v))$, where $\varrho(v)$ denotes the indegree of vertex $v$.
\end{problem}

This problem includes several notable special cases, such as maximizing $\prod_{v \in V} \varrho(v)$, maximizing $\sum_{v\in V}\varrho(v)\delta(v)$, and minimizing $\sum_{v \in V} \left([\alpha(v) - \varrho(v)]^+ + [\varrho(v) - \beta(v)]^+\right)$ for $\alpha,\beta : V \to \R$.

Of distinguished interest is the so-called \emph{symmetric} case of Problem~\ref{prob:sumPhi_VRhoV}, when our goal is to minimize $\sum_{v \in V}\varphi(\varrho(v))$ for a discrete convex function $\varphi : \Z_+ \to \R$.
As we will see in Section~\ref{sec:decMinIncMax}, this includes as special cases the \emph{decreasingly-minimal (dec-min)} and \emph{increasingly-maximal (inc-max)} objectives, aiming to lexicographically minimize the indegree vector sorted in non-increasing order and lexicographically maximize the indegree vector sorted in non-decreasing order, respectively.
These objectives enforce that the indegree vector becomes ``smooth'', ``equitable'', or ``egalitarian'', and have been studied by Frank and Murota~\cite{frank2022decreasing2, frank2022decreasing1} under the name \emph{egalitarian orientation problems}.
They proved that a single orientation simultaneously optimizes these three types of objectives.
However, restricting to acyclic orientations confines us to the corners of the indegree polytope, where these fairness objectives do diverge, see Sections~\ref{sec:convexHullOfIndegrees}~and~\ref{sec:decMinIncMax}.

\medskip
A significant part of the present paper is devoted to the investigation of such egalitarian orientation problems under the additional requirement that the orientation must be \emph{acyclic}:
\begin{problem}[$\min\sum_{v\in V} \varphi(\varrho(v))$ acyclic orientation]\label{prob:sumPhiRhoV}
  Given a loop-free multigraph $G = (V, E)$ and a discrete strictly convex function $\varphi: \Z_+ \to \R$ (i.e., a function satisfying $\varphi(z+1) + \varphi(z-1) > 2\varphi(z)$ for every positive integer $z$), find an acyclic orientation that minimizes $\sum_{v \in V} \varphi(\varrho(v))$.
\end{problem}

\begin{problem}[decreasingly minimal (dec-min) acyclic orientation]\label{prob:decMin}
  Given a loop-free multigraph $G = (V, E)$, find an acyclic orientation in which the indegree vector sorted in non-increasing order is lexicographically minimal.
\end{problem}

\begin{problem}[increasingly maximal (inc-max) acyclic orientation]\label{prob:incMax}
  Given a loop-free multigraph $G = (V, E)$, find an acyclic orientation in which the indegree vector sorted in non-decreasing order is lexicographically maximal.
\end{problem}

In contrast to the preceding problems, which consider only the indegree distribution, we also investigate the acyclic version of another natural special case of Problem~\ref{prob:sumPhi_VRhoV} that incorporates both the indegree and outdegree of each vertex.
\begin{problem}[$\max\sum_{v\in V}\varrho(v)\delta(v)$ acyclic orientation]\label{prob:rhoTimesDelta}
  Given a loop-free multigraph $G = (V, E)$, find an acyclic orientation that maximizes the sum $\sum_{v \in V} \varrho(v)\delta(v)$, where $\varrho(v)$ and $\delta(v)$ denote the indegree and outdegree of vertex $v$, respectively.
\end{problem}

An important observation is that each of the acyclic orientation problems introduced above can be equivalently reformulated as a \emph{vertex ordering problem}.
Rather than directly searching for an optimal acyclic orientation, one may instead seek a topological ordering of an optimal orientation.
In this reformulation, indegrees $\varrho$ correspond to left-degrees $\cev{d}$, and outdegrees $\delta$ to right-degrees $\vec{d}$.
An ordering of the vertices is optimal for the vertex ordering problem if and only if the corresponding acyclic orientation --- obtained by directing each edge from the earlier to the later vertex in the order --- is optimal for the original orientation problem.
Throughout this paper, we do not explicitly distinguish between acyclic orientation problems and their vertex ordering counterparts, though we primarily adopt the vertex ordering perspective.

\medskip
We note that Problem~\ref{prob:sumPhi_VRhoV} has a self-generalizing nature, namely, one can pose lower and upper bounds $f, g : V \to \Z_+$ on the indegrees of the vertices.
To this end, we introduce a modified discrete convex function $\varphi_v^{fg}$ for each $v \in V$, which turns $\varphi_v$ into a fast-decreasing linear function before $f(v)$, and a fast growing linear function after $g(v)$.
Formally, we define the \emph{$(f, g)$-lifting} of $\varphi_v$ as

\begin{equation}\label{eq:phivfg}
  \varphi_v^{fg}(z)
  = \begin{cases}
    \varphi_v(z)                    &\text{if } f(v) \leq z \leq g(v),\\
    \varphi_v(f(v))+(f(v) - z) M    &\text{if } z < f(v),\\
    \varphi_v(g(v))+(z - g(v)) M    &\text{if } g(v) < z,
  \end{cases}
\end{equation}
where $M$ is a sufficiently large integer.
This modified function penalizes those orientations that violate the indegree bounds $f$ and $g$, restricting the search to $(f,g)$-indegree-bounded orientations if such an orientation exists.
Furthermore, our framework trivially extends to optimizing over the orientations of mixed graphs by removing each directed arc and horizontally shifting the $\varphi_v$ function of its target by one.

\paragraph{Motivation}
We provide a purely mathematical motivation for Problems~\ref{prob:sumPhiRhoV}-\ref{prob:incMax}.
Consider the indegree vectors of all orientations of a loop-free undirected multigraph $G = (V, E)$, and take their convex hull in $\R^V$.
The resulting polytope is called the \emph{indegree polytope} and is denoted by $P_{\varrho}(G)$.
It is known that $P_{\varrho}(G)$ is a base polyhedron and its integer points are precisely the indegree vectors of orientations of $G$~\cite{frank2022decreasing2,frank2022decreasing1}.
Therefore, the unconstrained (possibly cyclic) versions of Problems~\ref{prob:sumPhiRhoV}-\ref{prob:incMax} aim to optimize the corresponding objective function ($\min \sum_{v\in V} \varphi(\varrho(v))$, dec-min, or inc-max) over the integer points of $P_{\varrho}(G)$.
These problems are known to be solvable in strongly polynomial time~\cite{borradaile2017egalitarian,frank2022decreasing2,frank2022decreasing1}, despite the fact that the objective functions are non-linear --- so standard linear programming theorems do not apply directly.
Optimal solutions to these objectives often lie in the interior of the polytope.
In fact, it is not difficult to prove that no corner of $P_{\varrho}(G)$ is optimal unless $G$ is a forest.

It is therefore natural to ask for an optimal \emph{corner} of $P_{\varrho}(G)$.
Surprisingly, the corners of $P_{\varrho}(G)$ --- that is, its vertices or extreme points --- are exactly the indegree vectors of \emph{acyclic} orientations of $G$; see Theorem~\ref{thm:VerticesOfBasePolyAreAcyclicIndigrees}.
Thus, restricting the search to corners naturally yields Problems~\ref{prob:sumPhiRhoV}-\ref{prob:incMax}.

\paragraph{Applications}
First, we propose an application of Problem~\ref{prob:sumPhi_VRhoV}, which is a generalization of the scheduling problem investigated in~\cite{burcea2016scheduling}.
Consider a set $J$ of unit-sized jobs and a set $T$ of timeslots.
Each job $j \in J$ must be assigned to exactly one timeslot from a predefined feasible set $I_j \subseteq T$ associated to the job $j$.
The load $\ell(t)$ of a timeslot $t \in T$ is the total number of jobs assigned to it.
The goal is to minimize the total cost $\sum_{t \in T} h(\ell(t))$ for a discrete convex function $h$, which measures the cost of each timeslot $t$ based on the load $\ell(t)$.
This problem is known to be solvable in polynomial time~\cite{burcea2016scheduling}.

We show how this problem fits into the framework of Problem~\ref{prob:sumPhi_VRhoV}.
In fact, we consider a more general scheduling problem in which each timeslot $t \in T$ has its own discrete convex cost function $h_t$, and our goal is to minimize $\sum_{t \in T} h_t(\ell(t))$.
Consider a bipartite graph $G = (J, T; E)$, where $J$ is the set of jobs, $T$ is the set of timeslots, and $E$ contains an edge between the job $j$ and the timeslot $t$ if and only if $t \in I_j$.
Let $\varphi_t = h_t$, and define $\varphi_j$ as the $(f,g)$-lifting of the all-zero function with $f(j) = g(j) = |I_j| - 1$, enforcing that exactly one arc leaves each job $j$, which enters the timeslot in which the job $j$ is scheduled.
Clearly, the indegree of each timeslot $t$ is exactly its load $\ell(t)$.
Thus, there is a natural cost-preserving correspondence between feasible schedules and feasible orientations for Problem~\ref{prob:sumPhi_VRhoV}.

\smallskip
Furthermore, a practical application of egalitarian acyclic orientation problems arises in the context of network routing.
In this setting, an undirected graph models the network, where vertices represent switches and edges correspond to communication links that provide two channels in the two directions between the endpoints, each with specified capacity.
In \emph{wormhole routing}, each packet is divided into small units called \emph{flits}.
The first and last of these are the so-called \emph{header flit} and the \emph{tail flit}, respectively.
The header flit carries routing information, including source and destination identifiers.
All flits of a packet traverse the network in sequence.
As the header flit travels, it reserves channel capacity for the packet; the reservation is released when the tail flit passes through the given channel.
During routing, each switch attempts to forward any flit waiting at an incoming channel to the next channel along its path --- provided that the next channel has available capacity.

A major challenge is to prevent \emph{deadlock}, a condition in which flits are indefinitely blocked by cyclic dependencies among channels, thereby stalling the system.
Figure~\ref{fig:deadlock} illustrates such a scenario.
\begin{figure}[t]
  \centering
  \begin{tikzpicture}[scale=0.5]
    \tikzset{VertexStyle/.append style = {minimum size = 29pt,inner sep=0pt, shape=rectangle}}
    \SetVertexNoLabel
    \Vertex[x=0, y=0, L=A]{A}
    \Vertex[x=4, y=0, L=B]{B}
    \Vertex[x=4, y=-4, L=C]{C}
    \Vertex[x=0, y=-4, L=D]{D}

    \draw [blue] (0.75,2) node (p1) {$P_1$};
    \draw [\NiceArrow, line width=1.5pt, blue] (0.15,3)--(0.15,1);
    \draw [\NiceArrow, line width=1.5pt, black!35] (-0.15,1)--(-0.15,3);
    \draw [\niceArrow, blue] (0.15,1)--(0.15,-1);
    \draw [\NiceArrow, line width=1.5pt, blue] (0.15,-1)--(0.15,-3);
    \draw [\NiceArrow, line width=1.5pt, black!35] (-0.15,-3)--(-0.15,-1);
    \draw [\niceArrow, blue, dashed] (0.15,-3)--(1,-3.85);

    \draw [cyan] (-2,-3.25) node (p2) {$P_2$};
    \draw [\NiceArrow, line width=1.5pt, cyan] (-3,-3.85)--(-1,-3.85);
    \draw [\NiceArrow, line width=1.5pt, black!35] (-1,-4.15)--(-3,-4.15);
    \draw [\niceArrow, cyan] (-1,-3.85)--(1,-3.85);
    \draw [\NiceArrow, line width=1.5pt, cyan] (1,-3.85)--(3,-3.85);
    \draw [\NiceArrow, line width=1.5pt, black!35] (3,-4.15)--(1,-4.15);
    \draw [\niceArrow, cyan, dashed] (3,-3.85)--(3.85,-3);

    \draw [red] (3.25,-6) node (p3) {$P_3$};
    \draw [\NiceArrow, line width=1.5pt, red] (3.85,-7)--(3.85,-5);
    \draw [\NiceArrow, line width=1.5pt, black!35] (4.15,-5)--(4.15,-7);
    \draw [\niceArrow, red] (3.85,-5)--(3.85,-3);
    \draw [\NiceArrow, line width=1.5pt, red] (3.85,-3)--(3.85,-1);
    \draw [\NiceArrow, line width=1.5pt, black!35] (4.15,-1)--(4.15,-3);
    \draw [\niceArrow, red, dashed] (3.85,-1)--(3,-0.15);

    \draw [green] (6,-0.75) node (p4) {$P_4$};
    \draw [\NiceArrow, line width=1.5pt, green] (7,-0.15)--(5,-0.15);
    \draw [\NiceArrow, line width=1.5pt, black!35] (5,0.15)--(7,0.15);
    \draw [\niceArrow, green] (5,-0.15)--(3,-0.15);
    \draw [\NiceArrow, line width=1.5pt, green] (3,-0.15)--(1,-0.15);
    \draw [\NiceArrow, line width=1.5pt, black!35] (1,0.15)--(3,0.15);
    \draw [\niceArrow, green, dashed] (1,-0.15)--(0.15, -1);

    \tikzset{VertexStyle/.append style = {minimum size = 20pt,inner sep=0pt, shape=rectangle, fill=blue}}
  \end{tikzpicture}
  \caption{Illustration of a deadlock.
    The squares and the thick back-and-forth arcs represent the switches and the channels of the network, respectively.
    Each of the four colors illustrates the route of a packet.
    The gray channels remain unused.
    Each packet waits to be forwarded along the dashed arrow of its color to a channel blocked by another packet.}\label{fig:deadlock}
\end{figure}
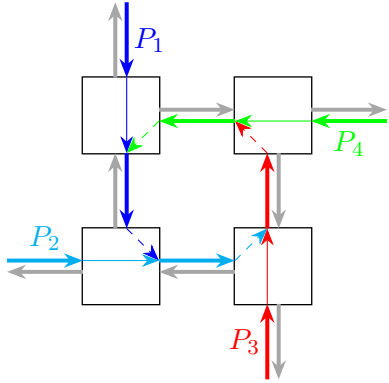
A well-established method for avoiding deadlocks is \emph{up-down routing}~\cite{alfaro2000performance, sancho2000new, schroeder1991autonet, wittorff2009implementation}, which begins by selecting an acyclic orientation of the network graph.
Forwarding is then restricted: subpaths in which two consecutive edges are both directed into the same vertex are disallowed.

This restriction ensures that the resulting \emph{channel dependency graph} is acyclic, which in turn guarantees deadlock-free routing~\cite{dally1987deadlock}.
In this dependency graph, vertices represent channels, and there is a directed arc from channel $c_i$ to $c_j$ if a packet can move from $c_i$ to $c_j$.
A cycle in the channel dependency graph corresponds to a closed walk $W$ in the original network where all consecutive edges are traversable, that is, no two are oriented into the same vertex.
This, in turn, implies that $W$ is oriented cyclically in the chosen orientation, contradicting the assumption of acyclicity.
Therefore, no such $W$ exists, and the routing is deadlock-free.

The number of forbidden subpaths at a vertex depends solely on its indegree.
Thus, choosing an appropriate acyclic orientation is critical for minimizing the number of routing constraints.
For example, a \emph{dec-min} acyclic orientation minimizes the maximum number of forbidden subpaths at any vertex, while a $\min \sum_{v\in V} \binom{\varrho(v)}{2}$ orientation minimizes the total number of such subpaths.
Later, we will see that the optimal solutions to this latter problem coincide with those that minimize $\sum_{v\in V} \varrho(v)^2$.
Note that in real-world implementations, additional constraints --- such as rooted connectivity --- are often imposed to ensure that there exist feasible routes between certain pairs of switches~\cite{alfaro2000performance}.

\paragraph{Related work}
The unconstrained (possibly cyclic) counterparts of Problems~\ref{prob:sumPhiRhoV}-\ref{prob:incMax}, known as egalitarian orientation problems, along with graph orientations in general, have been extensively studied in the literature~\cite{borradaile2017egalitarian,frank1980orientation,frank2022decreasing2,frank2022decreasing1}.
A fundamental fairness objective is to minimize the maximum indegree in an (acyclic) orientation.
Graphs that admit an acyclic orientation with maximum indegree at most $k$ are called \emph{$k$\nobreakdash-degenerate}, a notion first introduced by Lick and White~\cite{lick1970k}.
The smallest such $k$ --- the \emph{degeneracy} of the graph --- is denoted by $\cev{d}_{\min}$.
A simple greedy algorithm can construct in linear time an acyclic orientation in which the indegree of every vertex is at most the degeneracy number~\cite{matula1983smallest}.

The problem of minimizing the maximum indegree has also been studied for orientations without the acyclicity constraint.
In this case, a simple augmenting-path approach yields a polynomial-time algorithm~\cite{asahiro2007graph,de1995regular,venkateswaran2004minimizing}.
However, when a weight function $w : E \to \R_+$ is introduced and the objective is to minimize the maximum \emph{weighted} indegree, the problem becomes NP-hard --- even when each weight is either $1$ or $k$ for some fixed $k \geq 2$~\cite{asahiro2011approximation, asahiro2007graph}.

A stricter notion of fairness is captured by the dec-min orientation and dec-min acyclic orientation problems introduced by Borradaile et al.~\cite{borradaile2017egalitarian}.
These problems seek orientations where the indegree vector sorted in non-increasing order is lexicographically minimal.
Notably, any dec-min acyclic orientation guarantees that the maximum indegree is at most the degeneracy number $\cev{d}_{\min}$.
The dec-min acyclic orientation problem is NP-hard, whereas the unconstrained (possibly cyclic) variant remains solvable in polynomial time via a dipath-reversing method~\cite{borradaile2017egalitarian}.

Frank and Murota~\cite{frank2022decreasing2,frank2022decreasing1} further investigated the dec-min orientation problem without the acyclicity constraint.
They proved that the optimal solutions to the dec-min and inc-max orientation problems coincide.
Moreover, these solutions also minimize the sum $\sum_{v \in V} \varphi(\varrho(v))$ for any discrete strictly convex function $\varphi: \Z_+ \to \R$.
Their framework leverages the theory of M-convex sets, providing a strongly polynomial-time algorithm for computing a dec-min element in such a set, particularly one arising from an integral base polyhedron --- extending well beyond graph orientations.

Special cases of Problem~\ref{prob:sumPhi_VRhoV} were already studied in the literature.
If $\varphi_v$ is the $(f,g)$-lifting of the all-zero function for each $v \in V$, then we obtain a straightforward generalization of the indegree-bounded orientation problem~\cite{hakimi1965degrees}.
The indegree-bounded acyclic orientation problem is known to be NP-hard~\cite{kiraly2018acyclic}, thus, the acyclic version of Problem~\ref{prob:sumPhi_VRhoV} is NP-hard even if $\varphi_v$ is the $(f,g)$-lifting of the all-zero function for each $v\in V$.
If $\varphi_u \equiv \varphi_v$ for every $u, v \in V$, then we obtain the unconstrained (possibly cyclic) version of Problem~\ref{prob:sumPhiRhoV}.

Another line of work on equitable acyclic orientations was proposed by Biedl et al.~\cite{biedl2005balanced}, who introduced the concept of balanced acyclic orientations, where the goal is to minimize the \emph{total imbalance}, defined as the sum of absolute differences between indegree and outdegree over all vertices --- corresponding to Problem~\ref{prob:sumPhi_VRhoV} for $\varphi_v(z)=|2z-d(v)|$.
They showed that computing a balanced acyclic orientation is NP-hard even for graphs with maximum degree at most $6$.
Later work refined this hardness result, showing NP-completeness for graphs with maximum degree $4$ and even for $5$\nobreakdash-regular graphs~\cite{kara2005complexity}.
In contrast, the problem is solvable in polynomial time for graphs with maximum degree at most $3$~\cite{biedl2005balanced}.
Note that the unconstrained (possibly cyclic) orientations minimizing the total imbalance are the Eulerian or almost-Eulerian orientations, and such an orientation can be computed in polynomial time.

\paragraph{Our contribution}
In Section~\ref{sec:convexHullOfIndegrees}, we prove that the corners of the indegree polytope in $\R^V$ are exactly the indegree vectors of acyclic orientations.
This provides a direct equivalence between acyclic orientation problems and the search for optimal corners of this polytope.
Moreover, we define a polyhedron by a polynomial number of inequalities whose projection yields the indegree polytope, which enables a unified treatment of egalitarian and certain non-linear orientation problems.
In particular, we give a simple polynomial-time algorithm for Problem~\ref{prob:sumPhi_VRhoV}.
As special cases, this solves Problems~\ref{prob:sumPhiRhoV}-\ref{prob:rhoTimesDelta} without the acyclicity requirement, including other natural problems, such as $\max \prod_{v \in V} \varrho(v)$ and $\min \sum_{v \in V} \left([\alpha(v) - \varrho(v)]^+ + [\varrho(v) - \beta(v)]^+\right)$ for $\alpha,\beta : V \to \R$.

In Section~\ref{sec:minSumH}, we examine Problem~\ref{prob:sumPhiRhoV}.
Our main result is that this problem is NP-hard for loop-free multigraphs for \emph{every} discrete strictly convex function $\varphi$, although the optimal acyclic orientations themselves do depend on the choice of $\varphi$, see Section~\ref{sec:decMinIncMax}.
This immediately implies that Problems~\ref{prob:sumPhiRhoV}-\ref{prob:incMax} and $\max \prod_{v \in V} \varrho(v)$ are all NP-hard.
In fact, this general hardness result holds for every discrete (not necessarily strictly) convex function $\varphi$ that has a breakpoint at a specified value.
This settles the hardness for minimizing $ \sum_{v \in V} \left([\alpha(v) - \varrho(v)]^+ + [\varrho(v) - \beta(v)]^+\right)$ over acyclic orientations.
Furthermore, we also obtain as a corollary that minimizing $\sum_{v\in V} [\varrho(v) - g(v)]^+$ over acyclic orientations is NP-hard, while one can decide in polynomial time whether the optimal objective value is zero, i.e., whether an acyclic orientation exists with $\varrho(v)\leq g(v)$ for each $v\in V$~\cite{kiraly2018acyclic}.
Complementing our main hardness result, even Problem~\ref{prob:sumPhi_VRhoV} turns out to be solvable in polynomial time under the acyclicity constraint provided that the functions $\varphi_v$ are linear.

In Section~\ref{sec:minSquareSum}, we explore the special case of minimizing the square-sum of indegrees over acyclic orientations.
Here, we prove NP-hardness even for simple graphs and analyze the approximation ratio of a natural greedy algorithm.

Section~\ref{sec:decMinIncMax} addresses the dec-min and inc-max acyclic orientation problems --- both of which turn out to be special cases of Problem~\ref{prob:sumPhiRhoV} --- with and without an upper bound $k$ on the indegrees.
We show that without indegree bound, the optimal solutions to the two problems may differ, but they coincide when the bound is $k=2$.
It was shown in~\cite{borradaile2017egalitarian} that the $k$-indegree-bounded dec-min acyclic orientation problem is NP-hard for every odd $k \geq 5$ and also in the unbounded case, which we strengthen by showing NP-hardness for all $k \geq 2$.
We further establish that finding an inc-max acyclic orientation is NP-hard for all $k \geq 2$ and also in the unbounded case.
We introduce the complementary inc-min and dec-max problems and prove that they are NP-hard for both acyclic and unconstrained orientations.
Furthermore, we investigate the problem of minimizing the maximum (weighted) indegree in acyclic orientations --- as the first step of finding a dec-min solution.
Minimizing the maximum indegree in the unweighted case can be done in polynomial time~\cite{matula1983smallest}, which we generalize to the weighted case.
In contrast, the corresponding problem without the acyclicity constraint is polynomial-time solvable in the unweighted setting~\cite{asahiro2007graph,de1995regular,venkateswaran2004minimizing} but becomes NP-hard once weights are introduced~\cite{asahiro2011approximation,asahiro2007graph}.

Furthermore, we present an exact exponential-time dynamic programming algorithm for minimizing $\sum_{v\in V}\varphi_v(\varrho(v))$ over acyclic orientations, where $\varphi_v: \Z_+ \to \R$ may be arbitrary.
The algorithm runs in $O(2^{|V|}\poly(|V|,|E|))$ time, provided that these functions can be evaluated in polynomial~time.

Finally, Section~\ref{sec:maxSumLeftRight} addresses the objective $\max\sum_{v\in V}\varrho(v)\delta(v)$ over acyclic orientations.
We leverage results from~\cite{biedl2005balanced, kara2005complexity} to show NP-hardness for simple graphs with maximum degree at most $4$, and provide a polynomial-time algorithm for graphs with maximum degree at most $3$.
In addition, we prove that orienting by a random topological ordering achieves an expected approximation ratio of~$3$ on loop-free multigraphs, and we give a deterministic algorithm that attains the same ratio.

\paragraph{Notation}
Throughout this paper, we work with an undirected graph $G = (V, E)$.
Parallel edges are allowed, while loops are excluded unless explicitly stated otherwise.
For a vertex $v \in V$, its degree is denoted by $d_G(v)$, or simply $d(v)$ when the graph is clear from context.
If loops are present, then each loop contributes one to the degree of its incident vertex.
The \emph{maximum degree} of $G$ is denoted by $\Delta(G)$.
For a subset $V' \subseteq V$, we write $G[V']$ for the subgraph induced by $V'$, and $d(v, V')$ for the degree of $v$ within this subgraph.
We denote a directed graph~(\emph{digraph} for short) by $D = (V, A)$.
The \emph{indegree} and \emph{outdegree} of a vertex $v$ in $D$ are denoted by $\varrho_D(v)$ (or simply $\varrho(v)$) and $\delta_D(v)$ (or simply $\delta(v)$), respectively.
An \emph{ordering} of the vertices is written as $\sigma = (\sigma_1, \dots, \sigma_n)$, where $\sigma_i \in V$ is the vertex in position $i$.
The set of all vertex orderings is denoted by $\mathcal{S}_V$.
For a vertex $v = \sigma_i$ in an ordering $\sigma$, the \emph{left-degree} is $\cev{d}_{\sigma}(v) = d(v, \{\sigma_1, \dots, \sigma_i\})$, and the \emph{right-degree} is $\vec{d}_{\sigma}(v) = d(v, \{\sigma_{i+1}, \dots, \sigma_n\})$.
We often omit~$\sigma$ when the ordering is clear and write $\cev{d}(v)$ and $\vec{d}(v)$.
When a weight function $w : E \to \R_+$ is given, we extend these notations with a superscript $w$ for the weighted degrees, for example, $d^w(v) = \sum_{e \in E : v \in e} w(e)$ denotes the \emph{weighted degree} of $v$.
An ordering $\sigma$ is called \emph{$k$\nobreakdash-bounded} if $\cev{d}(v) \leq k$ for all $v \in V$.
The \emph{degeneracy} of $G$, denoted by $\cev{d}_{\min}(G)$ or simply $\cev{d}_{\min}$, is the smallest $k$ for which a $k$\nobreakdash-bounded ordering exists.
A discrete function $\varphi : \Z_+\to \R$ is \emph{convex} if $\varphi(z+1) + \varphi(z-1) \geq 2 \varphi(z)$ holds for every positive integer $z$.
We say that $z$ is a breakpoint of such a function $\varphi$ if the previous condition holds with strict inequality for $z$.
A discrete \emph{strictly convex} function is a discrete convex function that has a breakpoint at every positive integer $z$.
We use the notation $[x]^+ = \max\{x, 0\}$ for $x \in \R$, and we let $\chi_A$ denote the \emph{indicator function} of $A$, which equals $1$ if the statement $A$ holds and $0$ otherwise.

\section{The indegree polytope in $\R^V$}\label{sec:convexHullOfIndegrees}
In this section, we examine the convex hull of the indegree vectors in $\R^V$ corresponding to all orientations of a given graph~$G$.
We denote this polytope by $P_{\varrho}(G)$.
It is straightforward to show that the integer points of $P_{\varrho}(G)$ are precisely the indegree vectors of orientations of~$G$.
Consequently, minimizing $\sum_{v \in V} \varphi(\varrho(v))$ for a discrete strictly convex function~$\varphi$, finding a dec-min orientation, and finding an inc-max orientation is equivalent to optimizing the corresponding objective over the integer points of~$P_{\varrho}(G)$.

Frank and Murota~\cite{frank2022decreasing2,frank2022decreasing1} described $P_{\varrho}(G)$ as a base polyhedron.
Later in this section, we show that $P_{\varrho}(G)$ can also be obtained as the projection of a circulation polytope.
Furthermore, we present a simple algorithm for minimizing $\sum_{v\in V}\varphi_v(\varrho(v))$ over the integer points of $P_{\varrho}(G)$.

First, we turn to the study of the vertices of $P_{\varrho}(G)$.
A key insight, suggested by Andr\'as Frank, characterizes the corners of $P_{\varrho}(G)$, as formalized in the following theorem.
\begin{theorem}\label{thm:VerticesOfBasePolyAreAcyclicIndigrees}
  For every graph $G = (V, E)$, the corners of $P_{\varrho}(G)$ are precisely the indegree vectors of the acyclic orientations of $G$.
\end{theorem}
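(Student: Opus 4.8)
The plan is to use the fact that, by its very definition, $P_{\varrho}(G)$ is the convex hull of the \emph{finite} set of indegree vectors $\{\varrho_D : D \text{ an orientation of } G\}$. Consequently a point is a corner of $P_{\varrho}(G)$ precisely when it is the strict maximizer of some linear functional $c \in \R^V$ over this finite generating set, and the whole argument reduces to understanding which orientations maximize $\sum_{v\in V} c(v)\varrho_D(v)$. The key is a one-line optimization lemma: writing $h_D(e)$ for the head of edge $e$ under $D$, we have $\sum_{v\in V} c(v)\varrho_D(v) = \sum_{e=uv\in E} c(h_D(e)) \le \sum_{e=uv\in E}\max\{c(u),c(v)\}$, with equality iff every edge is directed toward its endpoint of larger $c$-value. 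When $c$ is \emph{injective} (no two coordinates equal), this maximizer is a single well-defined orientation $D_c$, and $D_c$ is acyclic: ordering the vertices by increasing $c$-value is a topological order, since every edge then points from the smaller to the larger value. Thus for injective $c$ the functional $\langle c,\cdot\rangle$ is strictly maximized over all orientations by the acyclic orientation $D_c$.

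First I would prove that every acyclic orientation gives a corner. Given an acyclic $D$, fix a topological order and let $c$ assign strictly increasing values along it; then $D = D_c$, so by the lemma $\varrho_D$ is the \emph{strict} maximizer of $\langle c,\cdot\rangle$ over the finite set of all indegree vectors. A point that strictly maximizes a linear functional over a finite generating set is necessarily an extreme point of the convex hull (any point of $P_{\varrho}(G)$ attaining the maximum is a convex combination of maximizing generators, hence equals $\varrho_D$), so $\varrho_D$ is a corner of $P_{\varrho}(G)$.

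Conversely, I would show every corner is the indegree vector of an acyclic orientation. If $p$ is a corner, there is $c_0 \in \R^V$ with $\langle c_0,p\rangle > \langle c_0,q\rangle$ for every indegree vector $q \neq p$. The set of functionals $c$ satisfying this strict inequality for all such $q$ is a finite intersection of open halfspaces, hence open and nonempty. Since the injective vectors are dense in $\R^V$ (their complement is the finite union of hyperplanes $\{c(u)=c(v)\}$), I can pick an injective $c'$ in this open set. Then $p$ remains the strict maximizer of $\langle c',\cdot\rangle$ over all indegree vectors; but by the lemma that strict maximizer is $\varrho_{D_{c'}}$, and $D_{c'}$ is acyclic. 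Hence $p = \varrho_{D_{c'}}$ is the indegree vector of an acyclic orientation, completing the characterization.

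The routine points are the equivalence ``strict linear maximizer over the finite generating set $\Leftrightarrow$ extreme point'' and the openness/density bookkeeping in the perturbation step. The only mild subtlety I anticipate is that $P_{\varrho}(G)$ is not full-dimensional --- it lies in the hyperplane $\sum_{v\in V}\varrho(v)=|E|$ --- so functionals differing by a multiple of the all-ones vector act identically; this is harmless, since adding a constant to $c'$ preserves both its injectivity and all the strict inequalities, so the perturbation argument is unaffected. I expect the main conceptual step to be isolating the optimization lemma and recognizing that injectivity of $c$ is exactly the condition forcing the maximizing orientation to be simultaneously \emph{unique} and \emph{acyclic}, which is what links corners of the polytope to acyclicity.
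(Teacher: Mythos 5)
Your proof is correct, and it takes a genuinely different route from the paper's. The paper argues both directions combinatorially: for a non-acyclic orientation it picks a directed cycle, reverses each of its $k$ arcs in turn, and writes the indegree vector as the average of the $k$ resulting indegree vectors, so it is not extreme; for an acyclic orientation it proceeds by induction on $|V|$, peeling off a source vertex (indegree $0$) and invoking the inductive hypothesis on the remaining graph. You instead use the exposed-point characterization of vertices of a polytope given as the convex hull of a finite set, together with the optimization identity $\sum_{v} c(v)\varrho_D(v)=\sum_{e=uv\in E}c(h_D(e))\le\sum_{e=uv\in E}\max\{c(u),c(v)\}$: for injective $c$ the unique maximizing orientation directs every edge toward its larger-$c$ endpoint and is therefore acyclic, which handles the forward direction by choosing $c$ increasing along a topological order, and the converse by perturbing a strictly separating functional to an injective one (the perturbation is legitimate since the admissible functionals form a nonempty open set and injective vectors are dense). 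All the steps you flag as routine really are routine, and your remark about the lineality direction $\mathbf{1}$ is correctly identified as harmless. What each approach buys: yours makes explicit the greedy linear-optimization structure over $P_{\varrho}(G)$ (essentially the content of Theorem~\ref{thm:minLinearFunctionAcyclic}, which the paper proves separately later) and generalizes readily to other polytopes defined as convex hulls of combinatorial objects with such a greedy rule; the paper's argument is more elementary in that it avoids the separation theorem and density argument entirely, and its cycle-reversal step exhibits the explicit convex combination witnessing non-extremality, a local-move perspective that recurs in the egalitarian-orientation literature.
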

\begin{proof}
  We first show that the indegree vector of any non-acyclic orientation can be written as a convex combination of other indegree vectors.
  Let $D$ be a non-acyclic orientation of $G$ with indegree vector $x = (\varrho(v_1), \dots, \varrho(v_n))$, where $v_1, \dots, v_n$ are the vertices of $G$.
  Choose a directed cycle $C$ in $D$.
  Without loss of generality, assume that $C$ consists of the arcs $v_1 v_2, \dots, v_{k-1} v_k, v_k v_1$.
  Construct $k$ orientations $D^1, \dots, D^k$, where $D^j$ is obtained from $D$ by reversing the $j^{\text{th}}$ arc of $C$.
  The indegree vector of $D^j$ is $x^j=(\varrho(v_1),\dots,  \varrho(v_{j-1}), \varrho(v_j)+1, \varrho(v_{j+1})-1,\varrho(v_{j+2}),\dots, \varrho(v_n))$ for $j\in\{1,\dots,k-1\}$, and the indegree vector of $D^k$ is $x^k=(\varrho(v_1)-1,\varrho(v_2),\dots,  \varrho(v_{k-1}), \varrho(v_k)+1, \varrho(v_{k+1}),\dots, \varrho(v_n))$.
  Then $x = \frac{1}{k} \sum_{j=1}^k x^j$, and thus $x$ is not a vertex of $P_{\varrho}(G)$.

  Second, we show that the indegree vectors of acyclic orientations are vertices of $P_{\varrho}(G)$.
  We proceed by induction on $|V|$.
  For the base case $|V|=1$, the claim is trivial.
  Suppose the claim holds for graphs with $(|V| - 1)$ vertices.
  Let $D$ be an acyclic orientation of $G$, and denote its indegree vector by $x$.
  Suppose there exist orientations $D^1, \dots, D^k$ with indegree vectors $x^1, \dots, x^k$, and coefficients $\lambda_1, \dots, \lambda_k > 0$ with $\sum_{j=1}^k \lambda_j = 1$ such that $x = \sum_{j=1}^{k} \lambda_j x^j$.
  We show that $x^j=x$ for $j\in \{1,\dots,k\}$.
  Since $D$ is acyclic, some vertex has indegree $0$; without loss of generality, let this vertex be $v_n$.
  Let $\widetilde G = G \setminus \{v_n\}$, and let $\widetilde D, \widetilde D^1, \dots, \widetilde D^k$ denote the orientations $D,D^1, \dots, D^k$ restricted to the vertex set of $\widetilde G$, and let $\tilde x, \tilde x^1, \dots, \tilde x^k$ denote their indegree vectors, respectively.
  Then $\tilde x = \sum_{j=1}^k \lambda_j \tilde x^j$.
  By the inductive hypothesis, $\tilde x^j = \tilde x$ for every $j$.
  Since $v_n$ has indegree $0$ in $D$, the indegree of $v_n$ must be $0$ in $D^j$ for every $j$, implying $x^j = x$ for every $j$.
  Thus, $x$ is a corner of $P_{\varrho}(G)$.
\end{proof}

This characterization further motivates the study of egalitarian \emph{acyclic} orientations, as finding an optimal acyclic orientation for Problems~\ref{prob:sumPhi_VRhoV}-\ref{prob:rhoTimesDelta} is equivalent to finding a corner of $P_{\varrho}(G)$ optimizing the corresponding objective.
For any linear objective function, it follows that there always exists a minimizer among the indegree vectors of acyclic orientations.
Later, in Theorem~\ref{thm:minLinearFunctionAcyclic}, we provide a simple greedy algorithm for finding an acyclic orientation whose indegree vector minimizes a linear objective.

Next, we describe a polyhedron whose projection yields the polytope $P_{\varrho}(G)$.
Consider the following construction: let $G' = (V, E; E')$, where $E'$ contains an edge between $e \in E$ and $v \in V$ if $v$ is an endpoint of $e$ in $G$.
Let $A_{G'}$ denote the incidence matrix of this bipartite graph $G'$.
Extend the rows corresponding to $V$ with a $|V| \times |V|$ negated identity matrix, and the rows corresponding to $E$ with a $|V| \times |E|$ zero matrix.
Let $A$ denote the resulting matrix, which is clearly totally unimodular.
Let $y \in \R_+^{E'}$ be the variables corresponding to columns of $A_{G'}$, and $x \in \R_+^V$ the variables corresponding to the additional columns.
Define $b = (0^V, 1^E)$.
We prove the following.

\begin{theorem}\label{thm:TUMatrixForIndegreeVectors}
  Consider the polytope $P = \{ (y,x) \in \R_+^{E'} \times \R_+^V : A(y,x) = b \}$.
  Then the projection
  $P|_x = \{ x \in \R_+^V : (y,x) \in P \text{ for some } y \in \R_+^{E'} \}$
  is the polytope $P_{\varrho}(G)$ of indegree vectors of orientations of $G = (V, E)$.
\end{theorem}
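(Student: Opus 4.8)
The plan is to read off the combinatorial meaning of the linear system $A(y,x)=b$ and then exploit total unimodularity. Writing out the two blocks of rows, the rows indexed by $E$ assert, for each edge $e=\{u,w\}$, that $y_{(e,u)}+y_{(e,w)}=1$, while the rows indexed by $V$ (which carry the negated identity) assert $x_v=\sum_{e\ni v} y_{(e,v)}$. Thus a point $(y,x)\in P$ is a \emph{fractional orientation}: each edge splits one unit of orientation mass between its two endpoints, and $x_v$ accumulates the mass directed into $v$. In particular, an integral point $(y,x)\in P\cap(\Z^{E'}\times\Z^V)$ forces $\{y_{(e,u)},y_{(e,w)}\}=\{0,1\}$ for every edge, so it encodes a genuine orientation of $G$ with $x_v=\varrho(v)$. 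First I would record the resulting bijection, so that the projection $\pi\colon(y,x)\mapsto x$ maps the integral points of $P$ exactly onto the set of indegree vectors of orientations of $G$.

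Next I would invoke integrality. Since $A$ is totally unimodular and $b=(0^V,1^E)$ is integral, the polytope $P$ is integral, so $P=\operatorname{conv}(P_{\Z})$, where $P_{\Z}:=P\cap(\Z^{E'}\times\Z^V)$. (Boundedness is immediate: the edge constraints together with $y\ge 0$ force $y\in[0,1]^{E'}$, whence $0\le x_v\le d(v)$, so $P$ is indeed a polytope.) This is the only place where the specific structure of $A$ is used, and it is what upgrades the fractional description into a statement about the integer hull.

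Finally, because $\pi$ is linear it commutes with taking convex hulls, $\pi(\operatorname{conv}(S))=\operatorname{conv}(\pi(S))$. Chaining the three facts gives
\[
P|_x=\pi(P)=\pi(\operatorname{conv}(P_{\Z}))=\operatorname{conv}(\pi(P_{\Z}))=\operatorname{conv}\{\varrho\text{-vectors of orientations}\}=P_{\varrho}(G),
\]
which is the claim. I expect the only genuine content, beyond bookkeeping, to be the integrality step: one must be sure that total unimodularity of the \emph{augmented} matrix $A$ — not merely of the bipartite incidence matrix $A_{G'}$ — is what guarantees that $P$ is integral; this holds because appending the unit columns $-e_v$ preserves total unimodularity. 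Everything else is the routine identification of integral feasible $(y,x)$ with orientations and the elementary commutation of projection with convex hull. As a sanity check, one could instead argue the two inclusions directly: $P_{\varrho}(G)\subseteq P|_x$ is immediate, since each orientation yields an integral feasible point and $P|_x$ is convex, while $P|_x\subseteq P_{\varrho}(G)$ again reduces precisely to the integrality of $P$.
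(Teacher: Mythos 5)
Your proposal is correct and follows essentially the same route as the paper: both identify the integral points of $P$ with orientations, invoke total unimodularity of the augmented matrix $A$ (not just of $A_{G'}$) to get integrality of the bounded polytope $P$, and then project onto the $x$-coordinates. The only cosmetic difference is that you obtain the inclusion $P|_x \subseteq P_{\varrho}(G)$ from the general identity $\pi(\operatorname{conv}(S)) = \operatorname{conv}(\pi(S))$ for the linear projection $\pi$, whereas the paper carries out the equivalent corner-by-corner extremality argument by hand.
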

\begin{proof}
  We first show that $P_{\varrho}(G)\subseteq P|_x$.
  Fix an orientation of $G$ and define a vector $(y,x)\in \R_+^{E'}\times\R_+^V$ as follows:
  for each edge $e = uv \in E$, set $y_{eu} = 1$ if $e$ is oriented towards $u$, and otherwise $0$; and for each vertex $v \in V$, set $x_v = \varrho(v)$.
  By definition, $(y,x)$ satisfies all defining constraints of $P$; hence $(y,x) \in P$, and its projection $x$ lies in $P|_x$.
  Because this holds for every orientation of $G$, $P|_x$ contains all indegree vectors of orientations of $G$, proving $P_{\varrho}(G) \subseteq P|_x$.

  Since $P$ and $P|_x$ are bounded polyhedra, they are the convex hulls of their corners.
  We show that every corner $(y,x)$ of $P$ corresponds to an orientation of $G$ with indegree vector $x$, and that every corner $x$ of $P|_x$ arises from such a solution $(y,x)$.

  As $A$ is totally unimodular and $b$ is integral, the corners of $P$ are integral.
  Consider such an integral vector $(y,x) \in P$.
  For each edge $e = uv \in E$, the constraint $y_{eu} + y_{ev} = 1$ implies that exactly one of $y_{eu}, y_{ev}$ is $1$ and the other $0$.
  Orient $e$ towards $u$ if $y_{eu} = 1$, and towards $v$ otherwise.
  Then the indegree of $v$ is $x_v$, as required.

  Now let $x$ be a corner of $P|_x$.
  There exists a $y \in \R_+^{E'}$ so that $(y,x)$ is in $P$.
  Then $(y,x)$ can be written as a convex combination of distinct corners $(y_1, x_1), \dots, (y_k, x_k) \in P$.
  Projecting, this would give $x = \sum_{j=1}^k \lambda_j x_j$, contradicting the extremality of $x$ unless $x_j = x$ for all $j$.
  In that case, $(y_j, x)$ is a desired corner of $P$.
  Therefore, $P|_x$ is the indegree polytope of orientations of $G$, as claimed.
\end{proof}

We show how this simple polyhedral description can be leveraged to solve the unconstrained (possibly cyclic) versions of our problems.
Meyer~\cite{meyer1977class} presented a method for solving non-linear integer programs of the form $\min \sum_{i=1}^n \varphi_i(x_i) \text{ s.t.\ } Ax = b,\ x \in \Z_+^n$, where each $\varphi_i : \Z \to \R$ is a discrete convex function, $A \in \Z^{m \times n}$ is a totally unimodular matrix, and $b \in \Z^m$.
By applying this to our polyhedral description from Theorem~\ref{thm:TUMatrixForIndegreeVectors}, Meyer’s method solves the following orientation problem: given a discrete convex function $\varphi_v: \Z_+ \to \R$ for each vertex $v \in V$, find an unconstrained (possibly cyclic) orientation that minimizes the sum $\sum_{v \in V} \varphi_v(\varrho(v))$.
In contrast, we later prove that the \emph{acyclic} version of our original problem --- minimizing $\sum_{v \in V} \varphi(\varrho(v))$ over acyclic orientations --- is NP-hard; see Corollary~\ref{cor:minSumHLooplessNPC}.
This implies that, for the class of non-linear integer programs considered in~\cite{meyer1977class}, restricting the solution space to corner solutions makes the problem intractable in general.

\medskip
In the rest of this section, we present an alternative, strongly polynomial-time algorithm for solving the problem of finding an orientation of a graph $G$ that minimizes $\sum_{v \in V} \varphi_v(\varrho(v))$, where each $\varphi_v$ is a given discrete (not necessarily strictly) convex function.
Our approach gives a reduction to the minimum-cost integral flow problem, for which a strongly polynomial-time algorithm is known~\cite{tardos1985strongly}.
Following an idea similar to Meyer’s, we linearize the objective by expressing each discrete convex function as a sum of its increments and leveraging that they are non-decreasing.
What distinguishes our approach is the direct exploitation of the connection between orientations and flows, yielding a formulation that is not only more efficient, but also conceptually elegant.
\begin{theorem}\label{thm:nonAcyclicWithFlow}
  Let $G = (V, E)$ be a graph, and let $\varphi_v: \Z_+ \to \R$ be a discrete (not necessarily strictly) convex function for each $v \in V$.
  An orientation that minimizes $\sum_{v \in V} \varphi_v(\varrho(v))$ can be computed in strongly polynomial time, provided that $\varphi_v$ can be evaluated in $O(\poly(|V|, |E|))$ time for each $v \in V$.
\end{theorem}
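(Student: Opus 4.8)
The plan is to reduce the problem to a single minimum-cost integral $s$--$t$ flow computation, using the standard device of encoding a discrete convex cost by a family of unit-capacity parallel arcs whose costs are non-decreasing. First I would build the following (layered, acyclic) network $N$: introduce a source $s$ and a sink $t$, a node $u_e$ for every edge $e \in E$, and retain the vertices $v \in V$. For each $e \in E$ add an arc $s \to u_e$ of capacity $1$ and cost $0$; for each $e = uv \in E$ add arcs $u_e \to u$ and $u_e \to v$, both of capacity $1$ and cost $0$; and for each $v \in V$ add $d(v)$ parallel arcs $v \to t$, where the $i$-th such arc has capacity $1$ and cost $c_v(i) := \varphi_v(i) - \varphi_v(i-1)$. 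I would then ask for a minimum-cost flow of value $|E|$ from $s$ to $t$ (equivalently, a minimum-cost circulation after adding one uncapacitated arc $t \to s$).

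The crux is the correspondence between integral flows and orientations, together with the role played by convexity. Since the total capacity leaving $s$ equals $|E|$, any flow of value $|E|$ saturates every arc $s \to u_e$, so in an integral flow each $u_e$ forwards its single unit to exactly one of its two endpoints; orienting $e$ toward that endpoint yields an orientation $D$ in which the flow entering each $v$ equals $\varrho_D(v)$. Conservation then forces these $\varrho_D(v)$ units through the parallel arcs $v \to t$, which is possible because $\varrho_D(v) \le d(v)$. For a fixed orientation the cheapest way to route $\varrho_D(v)$ units through arcs of costs $c_v(1) \le c_v(2) \le \cdots$ is to use the $\varrho_D(v)$ cheapest ones, contributing $\sum_{i=1}^{\varrho_D(v)} c_v(i) = \varphi_v(\varrho_D(v)) - \varphi_v(0)$; here it is essential that the increments $c_v(i)$ are non-decreasing, which is exactly discrete convexity of $\varphi_v$. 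Hence every integral flow of value $|E|$ has cost at least $\sum_{v \in V}\bigl(\varphi_v(\varrho_D(v)) - \varphi_v(0)\bigr)$, and conversely every orientation is realized by a flow attaining this value. Therefore a minimum-cost flow of value $|E|$ has cost $\min_D \sum_{v \in V}\varphi_v(\varrho_D(v)) - \sum_{v \in V}\varphi_v(0)$, and reading off the orientation encoded by such a flow solves the problem.

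For the running time, the network has $O(|V| + |E|)$ nodes and, since $\sum_{v \in V} d(v) = 2|E|$, only $O(|E|)$ arcs. All increments $c_v(i)$ for $1 \le i \le d(v)$ can be precomputed with $O(|E|)$ evaluations of the functions $\varphi_v$, which is polynomial by the hypothesis that each $\varphi_v$ is evaluable in $O(\poly(|V|,|E|))$ time. Because all capacities are integral, there is an integral minimum-cost flow, and Tardos's strongly polynomial minimum-cost flow algorithm~\cite{tardos1985strongly} computes one using a number of arithmetic operations polynomial in $|V|$ and $|E|$ and independent of the (possibly real) costs $c_v(i)$, giving the claimed strongly polynomial bound. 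The only genuinely delicate point — and the step I expect to require the most care — is the convexity hypothesis: it is precisely what guarantees the increments are sorted, so that the greedy prefix selection at each $v$ is simultaneously optimal for the flow and equal to $\varphi_v(\varrho_D(v)) - \varphi_v(0)$; without it the flow could select cheap increments out of order and would no longer compute the intended objective.
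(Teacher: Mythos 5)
Your proposal is correct and follows essentially the same approach as the paper's proof: a reduction to a single minimum-cost integral flow of value $|E|$ in a unit-capacity network where a discrete convex cost $\varphi_v$ is encoded by $d(v)$ parallel arcs with the telescoping increments $\varphi_v(i)-\varphi_v(i-1)$ as costs, with convexity guaranteeing that an optimal flow uses a prefix of these arcs. The only cosmetic difference is that you place the parallel cost arcs on the sink side ($v \to t$) while the paper places them on the source side ($s \to v'_i$), which is a mirror image of the same construction.
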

\begin{proof}
  We reduce the problem to a minimum-cost integral flow problem.
  Let $G = (V, E)$ denote the graph for which we want to find an optimal orientation, and let $V = \{v_1, \dots, v_n\}$ and $E = \{e_1, \dots, e_m\}$.
  If $\varphi_v(0) \neq 0$, then define $\varphi_v'(z) = \varphi_v(z) - \varphi_v(0)$ for $z \in \Z_+$.
  Since
  \[
    \sum_{v \in V} \varphi_v(\varrho(v)) = \sum_{v \in V} \varphi_v'(\varrho(v)) + \sum_{v \in V} \varphi_v(0),
  \]
  the set of optimal orientations remains unchanged.
  Thus, we may assume without loss of generality that $\varphi_v(0) = 0$ for all $v \in V$.

  Construct a network $D = (V', A)$ as follows.
  The vertex set $V'$ consists of a source $s$, a sink $t$, a vertex $v'_i$ for each $v_i \in V$, and a vertex $e'_j$ for each $e_j \in E$.
  For each $v_i \in V$, add $d_G(v_i)$ parallel arcs from $s$ to $v'_i$.
  Add an arc from $v'_i$ to $e'_j$ if $e_j$ is incident to $v_i$ in $G$.
  Finally, add an arc from each $e'_j$ to $t$.
  Figure~\ref{fig:integralFlow} illustrates this construction.
  \begin{figure}[t]
    \centering
    \begin{tikzpicture}[xscale=1.1,yscale=1.45]
      \tikzset{VertexStyle/.append style = {minimum size = 20pt,inner sep=0pt}}
      \SetVertexMath
      \Vertex[x=-3, y=0, L=s]{s}
      \Vertex[x=0, y=1.5, L=v'_1]{v1}
      \Vertex[x=0, y=0, L=v'_i]{vi}
      \Vertex[x=0, y=-1.5, L=v'_n]{vn}

      \Vertex[x=3, y=1.5, L=e'_1]{e1}
      \Vertex[x=3, y=0, L=e'_j]{ej}
      \Vertex[x=3, y=-1.5, L=e'_m]{em}
      \Vertex[x=6, y=0, L=t]{t}

      \node(d1) at (0,0.85) {$\vdots$};
      \node(d1) at (0,-0.7) {$\vdots$};
      \node(d1) at (3,0.85) {$\vdots$};
      \node(d1) at (3,-0.7) {$\vdots$};

      \draw[\niceArrow] (s) to [out=33,in=203,looseness=1] (v1);
      \draw[\niceArrow] (s) to [out=23,in=213,looseness=1] (v1);

      \draw[\niceArrow] (s) to [out=5,in=175,looseness=1] (vi);
      \draw[\niceArrow] (s) to [out=-5,in=-175,looseness=1] (vi);
      \path[circleAroundEdges={.08}{.20}{.5}] (s)--(v1) node [above=3mm, pos=.45] {$d(v_1)$};
      \path[circleAroundEdges={.08}{.20}{.5}] (s)--(vi) node [above=1.5mm, pos=.55] {$d(v_i)$};
      \path[circleAroundEdges={.08}{.20}{.5}] (s)--(vn) node [below=3mm, pos=.45] {$d(v_n)$};

      \draw[\niceArrow] (s) to [out=-33,in=-203,looseness=1] (vn);
      \draw[\niceArrow] (s) to [out=-23,in=-213,looseness=1] (vn);

      \draw[\niceArrow] (vi)--(ej);
      \node(if1) at (1.5,-0.3) {if $v_i$ and $e_j$};
      \node(if2) at (1.5,-0.6) {are incident};

      \draw[\niceArrow] (e1)--(t);
      \draw[\niceArrow] (ej)--(t);
      \draw[\niceArrow] (em)--(t);
      \draw[\niceArrow] (vi)--(ej);
    \end{tikzpicture}
    \caption{The digraph $D = (V', A)$ constructed in the reduction to the minimum cost flow problem.
      The vertices $v'_1,\dots,v'_n$ correspond to the vertices of $G$ and the vertices $e'_1,\dots,e'_m$ correspond to the edges of $G$.}\label{fig:integralFlow}
  \end{figure}
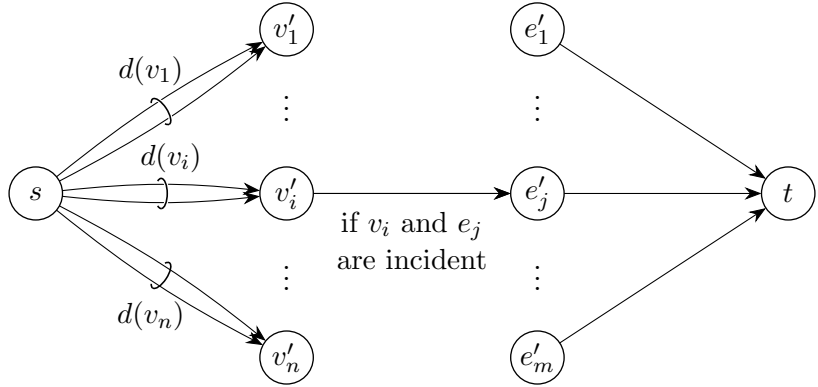
  Let the capacity of each arc be $1$.
  For each $i \in \{1, \dots, n\}$ and $\ell \in \{1, \dots, d(v_i)\}$, set the cost of the arc $a_\ell^i$ to $c(a_\ell^i) = \varphi_{v_i}(\ell) - \varphi_{v_i}(\ell - 1)$.
  Set the cost $c(a)$ of all other arcs to $0$.

  We now prove that the minimum cost of an integral $s$\nobreakdash-$t$ flow of amount $|E|$ is exactly $\sum_{v_i \in V} \varphi_{v_i}(\varrho(v_i))$ for an optimal orientation of $G$.
  First, observe that the total cost of sending $z$ units of flow along the first $z$ parallel arcs from $s$ to $v'_i$, for some $v_i \in V$, is
  \[
    \sum_{\ell=1}^z c(a_\ell^i) = \sum_{\ell=1}^z \big(\varphi_{v_i}(\ell) - \varphi_{v_i}(\ell - 1)\big) = \varphi_{v_i}(z) - \varphi_{v_i}(0) = \varphi_{v_i}(z).
  \]

  On the one hand, consider an optimal orientation, and construct an $s$\nobreakdash-$t$ flow $f : A \to \Z_+$ of amount $|E|$ as follows.
  For each $i \in \{1, \dots, n\}$, set $f(a_\ell^i) = 1$ if $\ell \leq \varrho(v_i)$.
  For each $j \in \{1, \dots, m\}$, set $f(v'_i e'_j) = 1$ if $e_j$ is oriented toward $v_i$ in $G$.
  Set $f(e'_j t) = 1$ for all $j$.
  Set $f(a) = 0$ for all other arcs.
  The flow $f$ is clearly feasible, with cost equal to $\sum_{v_i \in V} \varphi_{v_i}(\varrho(v_i))$, as it uses the first $\varrho(v_i)$ parallel $s v'_i$ arcs for each $v_i$.

  On the other hand, consider a minimum-cost integral $s$\nobreakdash-$t$ flow $f : A \to \Z_+$ of value $|E|$.
  Since $f(e'_j t) = 1$ for all $j$, each $e'_j$ has exactly one incoming arc $v'_i e'_j$ with $f(v'_i e'_j) = 1$.
  We orient $e_j$ toward $v_i$ if $f(v'_i e'_j) = 1$.
  The indegree $\varrho(v_i)$ is thus the number of parallel $sv'_i$ arcs used by $f$.
  Since $\varphi_{v_i}$ is convex, the costs satisfy
  \[
    c(a_\ell^i) = \varphi_{v_i}(\ell) - \varphi_{v_i}(\ell - 1) \leq \varphi_{v_i}(\ell') - \varphi_{v_i}(\ell' - 1) = c(a_{\ell'}^i)
  \]
  whenever $\ell < \ell'$.
  Therefore, $f$ can be assumed to use the first $\varrho(v_i)$ parallel arcs from $s$ to $v'_i$ for each $i$.
  Thus, the cost of the flow is $\sum_{v_i \in V} \varphi_{v_i}(\varrho(v_i))$, matching the objective value of the corresponding orientation.

  This shows that an orientation minimizing $\sum_{v_i \in V} \varphi_{v_i}(\varrho(v_i))$ can be found in polynomial time by solving the minimum-cost integral $s$\nobreakdash-$t$ flow problem in the network above.
  Since the latter is solvable in strongly polynomial-time~\cite{tardos1985strongly}, the theorem follows.
\end{proof}

Theorem~\ref{thm:nonAcyclicWithFlow} clearly solves Problems~\ref{prob:sumPhi_VRhoV}-\ref{prob:rhoTimesDelta} without the acyclicity requirement, as well as $\max \prod_{v \in V} \varrho(v)$ and $\min \sum_{v \in V} \left([\alpha(v) - \varrho(v)]^+ + [\varrho(v) - \beta(v)]^+\right)$ for every $\alpha,\beta : V \to \R$.
As already noted in the introduction, the self-generalizing nature of Problem~\ref{prob:sumPhi_VRhoV} immediately extends Theorem~\ref{thm:nonAcyclicWithFlow} to mixed graphs with lower and upper bounds for the indegrees of the vertices.

\section{Minimizing \texorpdfstring{$\sum_{v\in V} \varphi(\varrho(v))$ over acyclic orientations}{Sum phi(rho(v))}}\label{sec:minSumH}
This section studies the problem of minimizing the objective $\sum_{v\in V} \varphi(\varrho(v))$ for some discrete (strictly) convex function $\varphi: \Z_+ \to \R$ over the vertices of the indegree polytope.
By Theorem~\ref{thm:VerticesOfBasePolyAreAcyclicIndigrees}, this problem is equivalent to finding an \emph{acyclic} orientation of a graph $G = (V, E)$ that minimizes the same objective.

A closely related problem --- where acyclicity is not required --- was previously studied by Frank and Murota~\cite{frank2022decreasing2, frank2022decreasing1}.
They showed that in the unconstrained setting, the optimal orientations do not depend on the particular choice of the strictly convex function $\varphi$; and also that the same orientations are simultaneously optimal for both the decreasing-minimum (dec-min) and increasing-maximum (inc-max) orientation problems.
Furthermore, such orientations can be found in strongly polynomial time~\cite{borradaile2017egalitarian,frank2022decreasing2, frank2022decreasing1}, see also Theorem~\ref{thm:nonAcyclicWithFlow}.

Once acyclicity is imposed, however, this universality disappears: the optimal orientations generally do depend on the specific function $\varphi$, see Section~\ref{sec:decMinIncMax}.
Our first main result establishes hardness in this setting.
We show that for \emph{every} discrete strictly convex function $\varphi$, the problem becomes NP-hard when parallel edges are allowed --- in fact, this general hardness result extends to any discrete convex function that has a breakpoint at a specific value depending on the graph.
We further analyze three special cases: $\varphi(z) = z^2$, which corresponds to finding an acyclic orientation that minimizes the square-sum of indegrees; and the dec-min and inc-max problems.
Then, we propose an exact dynamic programming algorithm for solving a generalization of our original problem.

\begin{remark}
  In Section~\ref{sec:decMinIncMax}, we will see that the dec-min and inc-max acyclic orientation problems can be expressed as special cases of $\min \sum_{v\in V} \varphi(\varrho(v))$, corresponding to the choices $\varphi(z) = |V|^z$ and $\varphi(z) = |V|^{-z}$, respectively.
  We will show an example where the sets of optimal acyclic orientations for the dec-min and inc-max problems are disjoint --- in sharp contrast with the case of unconstrained (possibly cyclic) orientations.
  Beyond the corresponding exponential functions, we also observed divergence for polynomial objectives.
  For instance, the optimal acyclic orientations for $\varphi(z) = z^2$ and for $\varphi(z) = z^3$ do not coincide in general, and we conjecture that for every integer exponent $c \geq 2$, the problems for $\varphi(z) = z^c$ admit distinct sets of optimal solutions.
  Surprisingly, the cubic objective does not simply refine the quadratic one: there exist graphs where an optimal solution for $\varphi(z) = z^3$ fails to be optimal for $\varphi(z) = z^2$.
  This underlines the richness of the family of the problems defined by $\min \sum_{v\in V} \varphi(\varrho(v))$ under acyclicity, and highlights a clear distinction from the universality in the unconstrained (possibly cyclic) setting.~$\bullet$
\end{remark}
In the next section, we prove that minimizing $\sum_{v\in V} \varphi(\varrho(v))$ over acyclic orientations is NP-hard for \emph{every} discrete strictly convex function $\varphi$.
From now on, we mostly adopt the vertex ordering perspective, that is, we seek a topological ordering of the desired orientation, as discussed in Section~\ref{sec:introduction}.

\subsection{Hardness results}\label{sec:minSumHComplexity}\label{sec:minSumHNPH}
The goal of this section is to investigate the computational complexity of finding a vertex order of a graph $G = (V, E)$ that minimizes $\sum_{v \in V} \varphi(\cev{d}(v))$ for various discrete convex functions $\varphi : \Z_+ \to \R$.
Our main result is the following theorem.

\begin{theorem}\label{thm:minSumHLooplessNPCGeneral}
  Let $G = (V, E)$ be a graph and let $q=\left\lceil \frac{|E|}{|V|}\right\rceil$.
  For every discrete convex function $\varphi$ with a breakpoint at $q$, the problem of finding a vertex order of $G$ that minimizes $\sum_{v\in V}\varphi(\cev d(v))$ is NP-hard.
\end{theorem}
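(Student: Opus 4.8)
The plan is to reduce from a suitable NP-complete problem, exploiting the identity $\sum_{v\in V}\cev{d}_\sigma(v)=|E|$, which holds for \emph{every} ordering $\sigma$. Thus the average left-degree is $|E|/|V|$, and $q=\lceil|E|/|V|\rceil$ is exactly the value around which the left-degrees want to cluster: among all nonnegative integer vectors with coordinate sum $|E|$, a convex objective $\sum_v\varphi(x_v)$ is minimized by the \emph{balanced} vector, whose entries all lie in $\{q-1,q\}$. The breakpoint at $q$ is what turns ``being balanced'' into a strict numerical signal that every admissible $\varphi$ can detect. As supporting evidence that such hardness is plausible, note that the balanced acyclic orientation problem of~\cite{biedl2005balanced,kara2005complexity} is precisely our problem for the single admissible function $\varphi(z)=|2z-d(v)|$; the real challenge is to obtain \emph{uniform} hardness across all $\varphi$ with breakpoint at $q$.

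The analytic engine is a \emph{breakpoint-majorization} lemma. First I would show that if an ordering $\sigma$ has all left-degrees in the window $\{q-1,q,q+1\}$, then, writing $n_+(\sigma)=\sum_{v}[\cev{d}_\sigma(v)-q]^+$ for the number of vertices at level $q+1$,
$$\sum_{v\in V}\varphi(\cev{d}_\sigma(v))=C+\big(\varphi(q+1)+\varphi(q-1)-2\varphi(q)\big)\,n_+(\sigma),$$
where $C$ is the cost of the balanced vector. This is because the fixed sum $|E|$ forces the counts at levels $q-1$ and $q$ to be affine functions of $n_+$, so the only second difference of $\varphi$ entering the objective is the one at $q$. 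Since $\varphi$ has a breakpoint at $q$, the coefficient $\varphi(q+1)+\varphi(q-1)-2\varphi(q)$ is strictly positive, so over window-confined orderings minimizing the objective is equivalent to minimizing $n_+$, \emph{independently of $\varphi$}. Second, using the smoothing move $(a,b)\mapsto(a-1,b+1)$ for $a\ge b+2$ — whose cost change is $-(\varphi(a)-\varphi(a-1))+(\varphi(b+1)-\varphi(b))\le 0$ by convexity, and which is strictly negative when it crosses $q$ (i.e.\ $a=q+1$, $b=q-1$) by the breakpoint — I would argue that a cost-minimal ordering of the gadget can be taken window-confined, so the identity applies at the optimum.

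The gadget would be a loop-free multigraph $G$ built from an instance of the source problem so that: (a) parallel edges serve as padding to force $\lceil|E|/|V|\rceil$ to equal the intended critical value $q$; (b) every vertex has degree close to $2q$, so left-degrees sit near $q$ and any ordering admits local modifications smoothing it into $\{q-1,q,q+1\}$ without increasing cost; and (c) the minimum of $n_+$ over orderings encodes the source optimum, with \textsc{Yes}-instances admitting an ordering with $n_+\le t$ and \textsc{No}-instances forcing $n_+\ge t+1$ for a threshold $t\ge 1$ read off from the instance. Combining (a)--(c) with the lemma, the optimum equals $C+(\varphi(q+1)+\varphi(q-1)-2\varphi(q))\cdot n_+^{\min}$, and comparing it against the value at $n_+=t$ decides the source problem. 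Throughout I would work in the vertex-ordering reformulation of Section~\ref{sec:introduction}, so acyclicity is automatic.

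The delicate part is the gadget, not the lemma. I expect three points to require care: (i) pinning $\lceil|E|/|V|\rceil=q$ by parallel-edge padding \emph{without} creating spurious low-cost orderings that bypass the intended structure; (ii) guaranteeing \emph{confinement} — that every cost-optimal ordering keeps all left-degrees in $\{q-1,q,q+1\}$ — which must be proved at the level of orderings, not merely for abstract degree vectors (each smoothing step has to be realizable by a local reordering), since this is exactly where the single breakpoint at $q$ absorbs all the hardness and makes the $\varphi$-independence valid; and (iii) verifying the encoding $n_+^{\min}=(\text{source optimum})+\text{const}$. I would also check that the padding sizes are polynomially bounded, so the reduction is polynomial. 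The strengthening to \emph{every} convex $\varphi$ with a breakpoint at $q$ (not only strictly convex ones) then comes for free, since the analysis uses only the second difference of $\varphi$ at $q$.
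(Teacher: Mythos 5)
Your analytic core is sound and matches the mechanism the paper actually uses: the paper's Claim~\ref{cl:uniqueMinimizer} is precisely a majorization statement showing that, for multisets with fixed cardinality, fixed sum, and excess $\gamma_L=\sum_\ell[\ell-q]^+$ at least $k$, any $\varphi$ with a breakpoint at $q$ is minimized by the multiset with entries in $\{q-1,q,q+1\}$ and exactly $k$ units of excess, so that minimizing the objective collapses to minimizing the excess independently of $\varphi$. Your closed-form identity for window-confined degree sequences is a correct special case of this. However, what you have written is a plan, not a proof: the entire combinatorial content of the theorem lives in the gadget, and items (a)--(c) of your sketch are desiderata rather than a construction. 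The paper reduces from set cover and builds a specific graph (triangle gadgets for each incidence $s_i\in e_j$, a hub vertex $s$ joined to all set- and element-vertices, and loops padding every degree to roughly $q+1$ with $q=2\sum_j|e_j|+1$), then proves via a careful counting argument (Claim~\ref{cl:setCover}) that an ordering of cost at most $\operatorname{opt}_{L^*}$ forces all element-vertices to the right of $s$ and lets the at most $k$ vertices of left-degree $\geq q+1$ be decoded into a set cover; a further two-stage construction eliminates the loops. None of this is supplied or evidently derivable from your outline, so the reduction --- the part that actually establishes NP-hardness --- is missing.

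A second, more technical concern: your step (ii) demands that \emph{every} cost-optimal ordering be window-confined, with each smoothing move $(a,b)\mapsto(a-1,b+1)$ realizable by a local reordering of the vertices. That realizability is exactly the kind of thing that fails in general (moving a vertex changes several left-degrees at once), and the paper deliberately avoids needing it: Claim~\ref{cl:uniqueMinimizer} operates purely on abstract multisets subject to the constraint $\gamma_L\geq k$, and the reduction only uses the one-sided implication that an ordering with cost at most $\operatorname{opt}_{L^*}$ must have $\gamma_L\leq k$ (otherwise its multiset would be a non-minimizer in $\mathcal{L}$). If you pursue your plan as stated, you are likely to get stuck proving confinement at the level of orderings; the fix is to argue, as the paper does, only about which degree multisets are numerically compatible with the optimal cost, and to let the graph structure --- not the smoothing moves --- translate a small excess into a solution of the source problem.
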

\begin{proof}
  We proceed in two steps. First, we establish the analogous hardness result for graphs that may contain loops, then we eliminate the loops.

  \medskip
  \noindent\textbf{Step 1:} hardness for graphs with loops.
  When loops are present, we slightly adapt the definitions of $\cev{d}(v)$ and $d(v)$: each loop incident to a vertex $v$ contributes exactly one both to its left-degree $\cev{d}(v)$ and to its degree $d(v)$.
  Note that, although a loop constitutes a cycle of length one --- thus no acyclic orientation exists --- the vertex-ordering formulation of the problem remains meaningful.

  The proof is by reduction from the set cover problem, which is NP-complete~\cite{karp1972reducibility}.
  Consider an instance of set cover with ground set $S = \{s_1, \dots, s_n\}$ and a collection of subsets $\mathcal{H} = \{e_1, \dots, e_m\} \subseteq 2^S$.
  The task is to decide whether there exist $k$ subsets $e_{j_1}, \dots, e_{j_k} \in \mathcal{H}$ such that $\bigcup_{\ell=1}^{k} e_{j_{\ell}} = S$.
  Without loss of generality, we assume that each element $s_i \in S$ appears in at least two subsets, and that $n > k$ and $m > k$.

  We now construct a graph $G' = (V', E')$ corresponding to this instance.
  For each subset $e_j \in \mathcal{H}$, introduce a vertex $e_j$.
  For each element $s_i \in S$, introduce a vertex $s_i$.
  For every incidence $s_i \in e_j$, introduce two auxiliary vertices $a_{s_i}^{e_j}$ and $b_{s_i}^{e_j}$.
  Connect $a_{s_i}^{e_j}$, $b_{s_i}^{e_j}$, and $e_j$ into a triangle, and add an edge between $b_{s_i}^{e_j}$ and $s_i$.
  Introduce a special vertex $s$, and connect it to all vertices $e_j$ and all vertices $s_i$.
  Finally, for technical reasons, we add loops so that $d'(e_j)=q+1$, $d'(s_i)=q+1$, $d'(a_{s_i}^{e_j})=q+1$, $d'(b_{s_i}^{e_j})=q+2$, and $d'(s)=q+n+k$ hold, where $d'$ is the degree function of $G'$ and $q = 2 \sum_{e_j\in \mathcal{H}} |e_j|+1$.
  Figure~\ref{fig:minSumHLoopsNPC} illustrates the construction (with loops omitted for clarity).

  In Claim~\ref{cl:multisetSecondProperty}, we verify that the particular choice of $q$ above satisfies the requirement $q = \left\lceil \frac{|E'|}{|V'|} \right\rceil$ from the statement of the theorem.
  Since $q$ is polynomial in the size of the set cover instance, so is the size of the constructed graph $G'$.

  \begin{figure}[t]
    \centering
    \begin{tikzpicture}[yscale=1]
      \tikzset{VertexStyle/.append style = {minimum size = 20pt,inner sep=0pt}}
      \SetVertexMath
      \Vertex[x=0, y=3,L=e_1]{e1}
      \draw (1.5,3) node () {$\dots$};
      \Vertex[x=3, y=3, L=e_j]{ej}
      \draw (4.5,3) node () {$\dots$};
      \Vertex[x=6, y=3, L=e_m]{em}

      \Vertex[x=2, y=2.25, L=a_{s_i}^{e_j}]{aij}
      \Vertex[x=3, y=1.5, L=b_{s_i}^{e_j}]{bij}
      \draw (4.25,1.5) node () {if $s_i\in e_j$};

      \Vertex[x=0, y=0,L=s_1]{s1}
      \draw (1.5,0) node () {$\dots$};
      \Vertex[x=3, y=0, L=s_i]{si}
      \draw (4.5,0) node () {$\dots$};
      \Vertex[x=6, y=0, L=s_n]{sn}

      \Vertex[x=-3,y=1.5, L=s]{s}

      \draw (ej)--(aij);
      \draw (ej)--(bij);
      \draw (aij)--(bij);
      \draw (bij)--(si);
      \draw (s) to [out=50,in=180,looseness=0.75] (e1);

      \draw (s) to [out=70,in=140,looseness=0.75] (ej);

      \draw (s) to [out=90,in=130,looseness=0.75] (em);

      \draw (s) to [out=-50,in=-180,looseness=0.75] (s1);

      \draw (s) to [out=-70,in=-140,looseness=0.75] (si);

      \draw (s) to [out=-90,in=-130,looseness=0.75] (sn);
    \end{tikzpicture}
    \caption{Illustration of the graph $G'$ constructed in the first step of the proof of Theorem~\ref{thm:minSumHLooplessNPCGeneral}.}\label{fig:minSumHLoopsNPC}
  \end{figure}
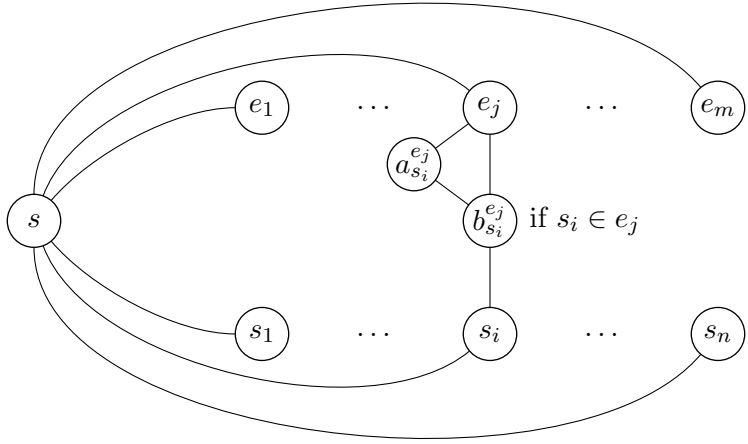

  Before that, we state a key lemma on degree distributions, which requires introducing some notation.
  Define $\mathcal{L}_{n',m',q',k'}$ as the set of multisets $L$ of non-negative integers such that
  \begin{enumerate}[label=\textbf{(\arabic*)}]
  \item $|L| = n'$,\label{item:L1}
  \item $\sum_{\ell \in L} \ell = m'$, where $m' = n'(q'-1) + r$ for some residual $r \in \{2k',\dots,n'-1\}$, and\label{item:L2}
  \item $\gamma_L \geq k'$, where $\gamma_L = \sum_{\ell \in L} [\ell - q']^+$.\label{item:L3}
  \end{enumerate}

  Let $L^* \in \mathcal{L}_{n',m',q',k'}$ be the multiset consisting of $k'$ copies of $(q' + 1)$, $(r - 2k')$ copies of $q'$, and $(n'+k'-r)$ copies of $(q'-1)$, where $r$ is the residual as in~\ref{item:L2}.
  Note that later we show that there exists a set cover of size $k$ if and only if there exists a vertex ordering of $G'$ with $\sum_{v\in V'}\varphi(\cev{d'}(v)) \;\leq\; \sum_{\ell\in L^*}\varphi(\ell)$, where $\cev{d'}(v)$ denotes the left-degree of $v$ in $G'$.

  The following claim establishes that $L^*$ minimizes $\sum_{\ell\in L}\varphi(\ell)$ over all $L \in \mathcal{L}_{n',m',q',k'}$ for any discrete convex function $\varphi$ with a breakpoint at $q'$.
  Moreover, it identifies a structural property that every minimizer must satisfy.
  \begin{claim}\label{cl:uniqueMinimizer}
    Let $\varphi: \Z_+ \to \R$ be a discrete convex function with a breakpoint at $q'$.
    Then for any $L'\in \mathcal{L}_{n',m',q',k'}$ minimizing $\sum_{\ell\in L}\varphi(\ell)$, we have $\gamma_{L'}=k'$.
    Moreover, $L^*$ is a minimizer of $\sum_{\ell\in L}\varphi(\ell)$ over $\mathcal{L}_{n',m',q',k'}$.
  \end{claim}
  \begin{subproof}
    We begin with the first part of the claim.
    Let $L \in \mathcal{L}_{n',m',q',k'}$ be a multiset with $\gamma_{L}>k'$.
    Then $L$ must contain an element which is at least $(q' + 1)$.
    Therefore, it also contains an element which is at most $(q' - 1)$, because of~\ref{item:L2}.
    Modify $L$ by decreasing an element of value at least $(q' + 1)$ by one, and increasing an element of value at most $(q' - 1)$ by one.
    This reduces $\gamma_L$ by exactly one, so condition~\ref{item:L3} still holds and the resulting multiset remains in $\mathcal{L}_{n',m',q',k'}$.
    Moreover, $\sum_{\ell\in L} \varphi(\ell)$ strictly decreases, since $\varphi$ has a breakpoint at $q'$.
    Thus $L$ was not a minimizer.

    \medskip
    Next, we prove that $L^*$ is optimal by showing that any optimal $L' \in \mathcal{L}_{n',m',q',k'} \setminus \{L^*\}$ can be transformed into $L^*$ without increasing the objective value.
    By the first part, we assume $\gamma_{L'} = k'$ throughout the transformation.

    First, we eliminate elements smaller than $(q' - 1)$.
    We show that if $L'$ contains such an element, then $q' \in L'$.
    Suppose for contradiction that $q' \notin L'$.
    Then
    \begin{align*}
      m' - \gamma_{L'}
      &=\sum_{\ell\in L'}(\ell-[\ell-q']^+)=\sum_{\ell\in L'}\min\{\ell,q'\}<\sum_{\ell\in L'}\left(q'-1+\chi_{\ell\geq q'}\right)\\
      &=n'(q'-1)+\#\{\ell\in L':\ell\geq q'\}=n'(q'-1)+\#\{\ell\in L':\ell\geq q'+1\}\\
      &\leq n'(q'-1)+\gamma_{L'},
    \end{align*}
    where the strict inequality holds because $L'$ contains an element smaller than $(q' - 1)$, the last equation uses the indirect assumption that $q' \notin L'$, and all other steps follow by definition and rearrangements.
    Thus $r = m'-n'(q'-1) < 2\gamma_{L'}$ holds.
    Since $r \geq 2k'$ by~\ref{item:L2}, we get that $\gamma_{L'}>k'$, contradicting the optimality of $L'$.
    Hence $q'\in L'$.
    Now decrease an element equal to $q'$ by one, and increase an element smaller than $(q' - 1)$ by one.
    This does not increase $\sum_{\ell \in L'}\varphi(\ell)$ by convexity of $\varphi$, keeps $\gamma_{L'}$ unchanged, and strictly decreases the number of elements of value smaller than $(q' - 1)$.
    Iterating this operation, we obtain a minimizer containing no element smaller than $(q' - 1)$.

    Second, we eliminate elements larger than $(q'+1)$.
    If each element of the minimizer $L'$ is at least $(q' - 1)$ and there exists an element in $L'$ that is larger than $(q'+1)$, then $(q'-1) \in L'$ by~\ref{item:L2}.
    It follows that
    \begin{align*}
      m' &= n'(q'-1)+\gamma_{L'}+\#\{\ell\in L': \ell=q'\}+\#\{\ell\in L': \ell\geq q'+1\}\\
         &< n'(q'-1)+2\gamma_{L'}+\#\{\ell\in L': \ell=q'\}\\
         &=n'(q'-1)+2k'+\#\{\ell\in L': \ell=q'\},
    \end{align*}
    where the inequality is strict because there exists an element in $L'$ that is larger than $(q'+1)$, which implies that $\#\{\ell\in L': \ell\geq q'+1\}<\gamma_{L'}$.
    Hence $r = m' - n'(q'-1) < 2k' + \#\{\ell\in L' : \ell=q'\}$, and since $r \geq 2k'$, it follows that $q' \in L'$.
    Now decrease an element larger than $(q'+1)$ by one, and increase an element equal to $q'$ by one.
    This again keeps $\gamma_{L'}$ unchanged, does not increase the objective value, does not introduce any element smaller than $(q'-1)$, and decreases the number of elements that are larger than $(q' + 1)$.
    Iterating this operation, we obtain a minimizer containing no element below $(q'-1)$ and no element above $(q'+1)$.

    \medskip
    Therefore, by applying the modifications above, we obtain an optimal $L'\in \mathcal{L}_{n',m',q',k'}$ that contains only elements $(q'-1)$, $q'$, and $(q'+1)$.
    This forces $L'$ to contain exactly $k'$ copies of $(q'+1)$.
    Then conditions~\ref{item:L1} and~\ref{item:L2} uniquely determine the remaining multiplicities in $L'$, namely it contains $(r - 2k')$ copies of $q'$ and $(n'+k'-r)$ copies of $(q'-1)$.
    Hence $L'=L^*$, proving that $L^*$ is optimal.
  \end{subproof}

  We remark that if $\varphi$ is strictly convex, then $L^*$ is in fact the unique minimizer of $\sum_{\ell\in L}\varphi(\ell)$ over $\mathcal{L}_{n',m',q',k'}$, which can be proved in the same way as Claim~\ref{cl:uniqueMinimizer}.

  We now continue the proof of Theorem~\ref{thm:minSumHLooplessNPCGeneral}.
  Consider the multisets in $\mathcal{L}_{n',m',q',k'}$ with parameters $n'=|V'|$, $m'=|E'|$, $q'=q$, and $k'=k$, as specified in the construction illustrated by Figure~\ref{fig:minSumHLoopsNPC}.
  The next step is to verify that these parameters satisfy condition~\ref{item:L2} in the definition of $\mathcal{L}_{n',m',q',k'}$, and thus $q=\left\lceil\frac{|E'|}{|V'|}\right\rceil$, as stated in the theorem.

  \begin{claim}\label{cl:multisetSecondProperty}
    For the graph $G' = (V', E')$ defined at the beginning of the proof, shown in Figure~\ref{fig:minSumHLoopsNPC}, we have $|E'| = |V'|(q-1) + r$ for some $r \in \{2k,\dots,|V'|-1\}$, where $q = 2 \sum_{e_j\in \mathcal{H}} |e_j|+1$ as defined above and $k$ is the number of sought covering sets.
  \end{claim}
  \begin{subproof}
    First, observe that
    \begin{equation}\label{eq:loopsV}
      |V'| = 2 \sum_{e_j\in \mathcal{H}} |e_j| + n + m + 1,
    \end{equation}
    and that $E'$ contains $4 \sum_{e_j\in \mathcal{H}} |e_j| + n + m$ non-loop edges.
    Next, we compute $|E'|$ by summing degrees.
    After adding loops, we have, $\sum_{v\in V'} d'(v)=(2q+3) \sum_{e_j\in \mathcal{H}} |e_j|+(q+1)(n+m)+q+n+k$, where each loop is counted once and each non-loop edge is counted twice in the sum.
    Therefore,
    \begin{equation}\label{eq:loopsE}
      |E'|=\sum_{v\in V'}d'(v)- \#\{\text{non-loop edges in } E'\}=(2q-1) \sum_{e_j\in \mathcal{H}} |e_j| + q(n+m+1) + n + k.
    \end{equation}
    The residual $r$ can be expressed using~\eqref{eq:loopsV} and~\eqref{eq:loopsE} as
    \begin{equation}\label{eq:loopsResidual}
      r = |E'| - |V'|(q-1)  = \sum_{e_j\in \mathcal{H}} |e_j| + n + m + 1 + n + k.
    \end{equation}
    Finally, we verify that $r \in \{2k, \dots, |V'|-1\}$.
    Since $m > k$, it follows immediately from~\eqref{eq:loopsResidual} that $r \geq 2k$.
    Moreover, $n + k < 2n \leq \sum_{e_j\in \mathcal{H}} |e_j|$, where the first inequality uses $n> k$ and the second uses that each element $s_i\in S$ appears in at least two subsets in $\mathcal{H}$.
    Substituting this into~\eqref{eq:loopsResidual} implies $r < |V'|$, completing the proof.
  \end{subproof}

  Recall from Claim~\ref{cl:uniqueMinimizer} that for any discrete convex function $\varphi$ with a breakpoint at $q'$, the multiset $L^*$ minimizes $\sum_{\ell\in L}\varphi(\ell)$ over $\mathcal{L}_{n',m',q',k'}$.
  Let $\operatorname{opt}_{L^*}$ denote this minimum value.
  \begin{claim}\label{cl:setCover}
    There exist subsets $e_{j_1},\dots,e_{j_k}\in\mathcal{H}$ covering $S$ if and only if there exists a vertex order of $G'$ such that $\sum_{v\in V'} \varphi(\cev{d'}(v))\leq\operatorname{opt}_{L^*}$.
  \end{claim}
  \begin{subproof}
    First, suppose $e_{j_1},\dots,e_{j_k}\in\mathcal{H}$ form a set cover of $S$.
    We construct a suitable order of the vertices as follows.
    For each $i=1,\dots,k$, place the vertex $e_{j_i}$ to the last available position, remove it from $G'$, and while there exists a vertex of degree at most $q$, place a vertex of minimum degree to the last available position and remove it from $G'$.
    After processing these $k$ vertices, place the vertex $s$ next, and then order the remaining vertices in the same manner: for each $e_j$ with $j\notin\{j_1,\dots,j_k\}$, put $e_j$ to the last free position, remove it from $G'$, and again repeatedly place vertices of minimum degree until none remain.
    In the resulting order, every vertex has left-degree $(q-1)$, $q$, or $(q+1)$, and exactly the vertices $e_{j_1},\dots,e_{j_k}$ have degree $(q+1)$, by construction of $G'$.
    Thus the multiset of left-degrees $\{\cev{d'}(v):v\in V'\}$ coincides with $L^*$, yielding $\sum_{v\in V'} \varphi(\cev{d'}(v))=\operatorname{opt}_{L^*}$, which proves the first direction of the statement.

    Second, suppose there exists a vertex order with $\sum_{v\in V'} \varphi(\cev{d'}(v)) \leq \operatorname{opt}_{L^*}$.
    Let $L=\{\cev{d'}(v) : v \in V'\}$ denote the corresponding multiset of left-degrees.
    Then $\gamma_L \leq k$, since otherwise Claim~\ref{cl:uniqueMinimizer} would be violated.

    Let $\varepsilon_e$ be the number of vertices $e_j$ placed after $s$, and $\varepsilon_s$ the number of vertices $s_i$ placed after $s$.
    With this notation,
    \[
      k
      \geq \gamma_L
      \geq [\cev{d'}(s)-q]^++\sum_{\substack{v\in V':\\s\text{ precedes } v}} [\cev{d'}(v)-q]^+
      \geq (n+k-\varepsilon_e-\varepsilon_s)+\varepsilon_e
      = n+k-\varepsilon_s.
    \]
    Here, the term $[\cev{d'}(s) - q]^+$ equals $(n+k-\varepsilon_e-\varepsilon_s)$, and each $e_j$ appearing after $s$ contributes at least one vertex $v$ (namely, one of $e_j,a_{s_i}^{e_j},b_{s_i}^{e_j}$) with $\cev{d'}(v) \geq q+1$.
    It follows that $\varepsilon_s \geq n$, so all vertices $s_1,\dots,s_n$ must lie to the right of $s$.
    Using the (at most) $k$ vertices with left-degree at least $(q+1)$, we now construct a set cover as follows.
    If $\cev{d'}(e_j)=q+1$ or $\cev{d'}(v)\geq q+1$ for some $v\in\{a_{s_i}^{e_j},b_{s_i}^{e_j}\}$, then include $e_j$ in the cover.
    If $\cev{d'}(s_i)=q+1$, then include any set $e_j\in\mathcal{H}$ containing $s_i$.
    What is left is to show that these subsets cover the whole ground set $S$.
    If $\cev{d'}(s_i)=q+1$, then $s_i$ is clearly covered.
    Otherwise, if $\cev{d'}(s_i) < q+1$, then the rightmost vertex $v$ from $\{e_j, a_{s_{\ell}}^{e_j},b_{s_\ell}^{e_j}: s_i,s_{\ell}\in e_j,s_\ell\in S, e_j\in \mathcal{H}\}$ has $\cev{d'}(v) > q+1$ by the construction of $G'$, hence the subset $e_j\in \mathcal{H}$ corresponding to $v$ covers $s_i$.
    Thus we obtain a valid set cover of size at most $k$, completing the proof.
  \end{subproof}

  Claim~\ref{cl:setCover} completes the first step of the proof of Theorem~\ref{thm:minSumHLooplessNPCGeneral}.
  Now we eliminate loops.

  \medskip
  \noindent\textbf{Step 2:} hardness for graphs without loops.
  We have established NP-hardness of minimizing $\sum_{v\in V}\varphi(\cev{d}(v))$ for graphs that may contain loops.
  Here we construct a loop-free graph whose optimal ordering preserves the structure of an optimal solution for the graph $G' = (V', E')$ from that proof (see Figure~\ref{fig:minSumHLoopsNPC}, except for the loops).
  The construction consists of two steps.

  \medskip
  First, we construct an auxiliary graph $G'' = (V'', E'')$ as follows.
  Take a copy of $G'$, add a new vertex $u$, and for each vertex $v\in V'$, replace each loop on $v$ with an edge between $u$ and $v$.
  Finally, add $(q+n+k)$ loops to $u$.

  We claim that in some optimal order of $G''$, $u$ appears first.
  We prove that from an arbitrary optimal order of $G''$, we can construct another where $u$ appears first.
  Consider an optimal order where some neighbor of $u$ precedes it, and let $v$ be the closest such neighbor.
  Observe that $\cev{d''}(v)\leq q+n+k<q+1+n+k\leq\cev{d''}(u)$, where $d''$ is the degree function of $G''$, which means moving $u$ directly before $v$ does not increase the objective value.
  Repeating this argument shows that $u$ can be placed before all of its neighbors, and hence moved to the first position without loss of optimality.

  Now consider such an optimal order $(u, \sigma)$ of $G''$, where $\sigma$ is an order of the vertices in $V'$.
  For each $v' \in V'$, let $v'' \in V''$ be its copy in $G''$.
  Then $\cev{d''}_{\!\!\!(u, \sigma)}(v'') = \cev{d'}_{\!\!\sigma}(v')$, which implies that the order $(u, \sigma)$ is optimal for $G''$ if and only if $\sigma$ is optimal for $G'$.

  \medskip
  Second, we construct a loop-free graph $G = (V, E)$ as follows.
  Take two disjoint copies $G_1 = (V_1, E_1)$ and $G_2 = (V_2, E_2)$ of $G''$, with $v_1\in V_1$ and $v_2\in V_2$ denoting the copies of $v\in V''$.
  Remove the $(q+n+k)$ loops from $u_1$ and $u_2$, and instead add $(q+n+k)$ parallel edges between $u_1$ and $u_2$, making $G$ loop-free.
  Consider an optimal order for $G$.
  Without loss of generality, assume $u_1$ appears before $u_2$ (otherwise swap the role of $G_1$ and $G_2$).
  Then the restriction of this order to $V_2$ must be an optimal order for $G''$, since otherwise we could improve the order of $G$.
  By the first part of the proof, we can assume that $u_2$ appears first among the vertices of $V_2$.

  Thus, from an optimal order of the loop-free graph $G$, we obtain an optimal order for $G'$ by restricting to $G_2$ and omitting $u_2$.
  This shows that minimizing $\sum_{v\in V}\varphi(\cev{d}(v))$ remains NP-hard even for loop-free graphs.

  \medskip
  Finally, we verify that $q=\left\lceil \frac{|E|}{|V|} \right\rceil$ as claimed.
  \begin{claim}
    For the graph $G = (V, E)$, we have $|E|=|V|(q-1)+r$ for some $r \in \{1, \dots, |V|-1\}$.
  \end{claim}
  \begin{subproof}
    By the construction of $G = (V, E)$, it follows that $|V|=2|V'|+2=4 \sum_{e_j\in \mathcal{H}} |e_j| + 2(n+m+1) + 2$ and
    $|E|=2|E'|+q+n+k=(4q-2) \sum_{e_j\in \mathcal{H}} |e_j| + 2q(n+m+1) + 3(n+k) + q$, where $|E'|$ and $|V'|$ are as expressed in~\eqref{eq:loopsV} and~\eqref{eq:loopsE}, respectively.
    Therefore,
    \[
      r
      = |E| - |V|(q-1)
      = 2 \sum_{e_j\in \mathcal{H}} |e_j| + 2(n+m+1) + 3(n+k) - q + 2.
    \]
    Since $q=2 \sum_{e_j\in \mathcal{H}} |e_j|+1$, as defined at the beginning of the proof of Theorem~\ref{thm:minSumHLooplessNPCGeneral}, it immediately follows that $r \geq 1$.
    Moreover, using the assumptions $n > k$ and that each $s_i\in S$ appears in at least two sets in $\mathcal{H}$, we obtain $3(n+k) - (q-1) < 6n - (q-1) \leq 3 \sum_{e_j\in \mathcal{H}} |e_j| - (q-1) = \sum_{e_j\in \mathcal{H}} |e_j|$.
    Substituting this into the expression for $r$ gives $r < |V|$.
    Hence $r \in \{1, \dots, |V|-1\}$, as required.
  \end{subproof}

  This completes the proof of Theorem~\ref{thm:minSumHLooplessNPCGeneral}.
\end{proof}

Theorem~\ref{thm:minSumHLooplessNPCGeneral} yields the following two natural corollaries concerning important special cases.

\begin{corollary}\label{cor:minSumHLooplessNPC}
  For every discrete strictly convex function $\varphi : \Z_+ \to \R$, finding a vertex order that minimizes $\sum_{v \in V} \varphi(\cev{d}(v))$ is NP-hard.
  \FBOX
\end{corollary}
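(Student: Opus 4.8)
The plan is to obtain this as an immediate consequence of Theorem~\ref{thm:minSumHLooplessNPCGeneral}. The only thing that needs checking is that the hypothesis of that theorem --- the existence of a breakpoint at the specific value $q = \lceil |E|/|V|\rceil$ determined by the reduction graph --- is automatically met by any strictly convex $\varphi$.

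First I would recall the definition from the Notation paragraph: a discrete \emph{strictly convex} function is a discrete convex function that has a breakpoint at \emph{every} positive integer $z$. Now I would fix an arbitrary such $\varphi$. Given a set cover instance, I would invoke the construction carried out in the proof of Theorem~\ref{thm:minSumHLooplessNPCGeneral} to produce the loop-free graph $G = (V, E)$, and set $q = \lceil |E|/|V|\rceil$. Since $q$ is a positive integer, $\varphi$ has a breakpoint at $q$ by strict convexity, so the hypothesis of Theorem~\ref{thm:minSumHLooplessNPCGeneral} holds for this particular $\varphi$ and this $G$. Applying the theorem, minimizing $\sum_{v \in V} \varphi(\cev{d}(v))$ over vertex orders of $G$ is NP-hard, which proves the corollary.

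There is essentially no obstacle here: the reduction has already been performed in full generality in Theorem~\ref{thm:minSumHLooplessNPCGeneral} for every convex $\varphi$ that breaks at $q$, and strict convexity is precisely the condition that guarantees a breakpoint at every integer, in particular at the value $q$ generated by the reduction. The one conceptual point worth emphasizing is that $q$ depends on the constructed instance rather than being fixed in advance; consequently, it is strictness --- breakpoints everywhere --- rather than a breakpoint at a single prescribed location that is exactly what is needed to cover all instances uniformly. This is why the corollary is stated for strictly convex $\varphi$ and follows without any additional work.
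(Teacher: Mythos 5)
Your proposal is correct and matches the paper exactly: the corollary is stated as an immediate consequence of Theorem~\ref{thm:minSumHLooplessNPCGeneral}, precisely because a strictly convex function has a breakpoint at every positive integer and hence at the instance-dependent value $q = \lceil |E|/|V|\rceil$. Your added remark that strictness is needed exactly because $q$ varies with the constructed instance is the right observation and is implicit in the paper's presentation.
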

\noindent
This immediately implies that Problems~\ref{prob:sumPhiRhoV}-\ref{prob:incMax} and $\max \prod_{v \in V} \cev{d}(v)$ are NP-hard.
We also obtain that, within the class of non-linear integer programs considered in~\cite{meyer1977class}, restricting attention to optimal corner solutions --- rather than arbitrary integer points of the polyhedron --- becomes NP-hard, as already discussed in Section~\ref{sec:convexHullOfIndegrees}.

Now we state another corollary of Theorem~\ref{thm:minSumHLooplessNPCGeneral}.
\begin{corollary}\label{cor:oneBreakPointSumHLooplessNPC}
  For every piecewise-linear discrete convex function $\varphi : \Z_+ \to \R$ with a single breakpoint at $q = \left\lceil \frac{|E|}{|V|} \right\rceil$, finding a vertex order that minimizes $\sum_{v \in V} \varphi(\cev{d}(v))$ is NP-hard.
  \FBOX
\end{corollary}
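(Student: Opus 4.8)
The plan is to read off the corollary as a direct specialization of Theorem~\ref{thm:minSumHLooplessNPCGeneral}, whose hypothesis asks only for a discrete convex function with a breakpoint at $q=\lceil |E|/|V|\rceil$. So the first thing I would do is verify that every function named in the corollary actually lies in the class handled by the theorem. A piecewise-linear discrete convex function $\varphi$ with a single breakpoint at $q$ is affine on $\{0,1,\dots,q\}$ with some slope $s_1$ and affine on $\{q,q+1,\dots\}$ with some slope $s_2$; discrete convexity forces $s_2\ge s_1$, and having a (single) breakpoint at $q$ means precisely $s_2>s_1$. I would then check the breakpoint condition in the \emph{discrete} sense rather than via the continuous intuition of a ``slope change'': for every positive integer $z\neq q$ the second difference $\varphi(z+1)+\varphi(z-1)-2\varphi(z)$ vanishes because $\varphi$ is affine on a neighborhood of $z$, whereas at $z=q$ it equals $s_2-s_1>0$. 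Hence $\varphi$ is a discrete convex function with a breakpoint at $q$.

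Second, I would align the breakpoint location with the graph parameter. The corollary fixes the breakpoint at exactly $q=\lceil |E|/|V|\rceil$, which matches the hypothesis of Theorem~\ref{thm:minSumHLooplessNPCGeneral} verbatim, and the set-cover reduction in that proof already produces a graph $G=(V,E)$ whose value $\lceil |E|/|V|\rceil$ coincides with the constructed $q$ (this is exactly what the two residual claims there establish). Therefore no additional construction is needed: applying Theorem~\ref{thm:minSumHLooplessNPCGeneral} to that graph $G$ with the given $\varphi$ immediately yields that minimizing $\sum_{v\in V}\varphi(\cev{d}(v))$ over vertex orderings is NP-hard.

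I expect no genuine obstacle here, since the whole content of the argument is the observation that piecewise-linear, single-breakpoint convex functions form a subfamily of the functions to which Theorem~\ref{thm:minSumHLooplessNPCGeneral} applies. The only point meriting care is confirming the strict second-difference at $q$ (handled in the first step); everything else is inherited from the theorem. It is worth remarking, as motivation, that a representative member of this family is $\varphi(z)=[z-q]^+$ up to an affine shift, so the corollary makes explicit that even the simplest convex penalties --- those charging only for indegrees exceeding $q$ --- already render the problem intractable.
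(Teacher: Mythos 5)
Your proposal is correct and coincides with the paper's own (implicit) argument: the corollary is stated as an immediate consequence of Theorem~\ref{thm:minSumHLooplessNPCGeneral}, since a piecewise-linear discrete convex function with a single breakpoint at $q=\left\lceil |E|/|V|\right\rceil$ is in particular a discrete convex function with a breakpoint at $q$. Your explicit verification of the strict second difference at $q$ and the alignment of the breakpoint with the graph parameter is exactly the (unstated) content behind the paper's \FBOX.
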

\noindent
Thus, minimizing $\sum_{v \in V} \left([\alpha(v) - \cev{d}(v)]^+ + [\cev{d}(v) - \beta(v)]^+\right)$ is NP-hard for $\alpha \equiv \beta \equiv \left\lceil\frac{|E|}{|V|}\right\rceil$.
Furthermore, minimizing $\sum_{v\in V} [\cev{d}(v) - g(v)]^+$ is NP-hard even for $g\equiv \left\lceil\frac{|E|}{|V|}\right\rceil$, while one can decide in polynomial time whether the optimal objective value is zero, i.e., whether an acyclic orientation exists with $\varrho(v)\leq g(v)$ for each $v\in V$~\cite{kiraly2018acyclic}.

\medskip
Complementing Corollary~\ref{cor:oneBreakPointSumHLooplessNPC}, the problem becomes polynomial-time solvable if $\varphi$ is linear.
We prove this in a more general setting in which, instead of a single function $\varphi$, each vertex $v \in V$ has its own linear function $\varphi_v$.
Although this result can also be derived from Theorems~\ref{thm:VerticesOfBasePolyAreAcyclicIndigrees} and~\ref{thm:TUMatrixForIndegreeVectors}, we present here a simpler argument, that also implies an even more efficient algorithm.
\begin{theorem}\label{thm:minLinearFunctionAcyclic}
  Let $G = (V, E)$ be a graph and let $\varphi_v : \Z_+ \to \R$ be a discrete linear function for each $v \in V$.
  Then ordering the vertices in non-increasing order of the slopes of $\varphi_v$ minimizes $\sum_{v \in V} \varphi_v(\cev{d}(v))$.
\end{theorem}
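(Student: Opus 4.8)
The plan is to reduce the objective to a linear functional of the ordering and then decompose it over the edges. Write each discrete linear function as $\varphi_v(z) = a_v z + b_v$, where $a_v$ denotes its slope. Since
\[
  \sum_{v\in V}\varphi_v(\cev{d}(v)) = \sum_{v\in V} a_v\, \cev{d}(v) + \sum_{v\in V} b_v,
\]
and the second sum is a constant independent of the chosen ordering, it suffices to minimize $\sum_{v\in V} a_v\, \cev{d}(v)$ over all orderings $\sigma \in \mathcal{S}_V$.

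The key step is to rewrite this vertex sum as a sum over edges. For a fixed ordering $\sigma$ and an edge $e = \{u,w\} \in E$, let $\ell_\sigma(e)$ denote the later of its two endpoints in $\sigma$. By definition, $\cev{d}(v)$ counts exactly those edges incident to $v$ whose other endpoint precedes $v$, i.e., precisely the edges for which $v$ is the later endpoint. Hence each edge is counted once, at its later endpoint, giving the identity
\[
  \sum_{v\in V} a_v\, \cev{d}(v) = \sum_{e\in E} a_{\ell_\sigma(e)},
\]
which holds verbatim for multigraphs, as parallel edges are summed independently. This turns the global vertex objective into a collection of independent per-edge contributions, each of which we can now optimize separately.

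Finally I would bound each summand from below. For every edge $e=\{u,w\}$ we trivially have $a_{\ell_\sigma(e)} \geq \min\{a_u, a_w\}$, regardless of $\sigma$, so $\sum_{e\in E} a_{\ell_\sigma(e)} \geq \sum_{e=\{u,w\}\in E} \min\{a_u, a_w\}$ for \emph{every} ordering. It remains to verify that a non-increasing slope ordering attains this bound simultaneously on every edge: if $\sigma$ lists the vertices in non-increasing order of their slopes, then for each edge $\{u,w\}$ the endpoint with the larger slope appears earlier (ties broken arbitrarily), so the later endpoint has the smaller slope and $a_{\ell_\sigma(e)} = \min\{a_u, a_w\}$. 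Summing over all edges shows that this ordering achieves the common lower bound and is therefore optimal. The argument is essentially self-contained; I expect no genuine obstacle beyond spotting the edge decomposition, which is what reduces the problem to independent per-edge choices. (As an alternative one could run an adjacent-transposition exchange argument, showing that swapping two consecutive vertices whose slopes are in the ``wrong'' order never increases the objective, but the edge-decomposition route is cleaner and avoids the bookkeeping of repeated swaps.)
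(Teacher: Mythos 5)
Your proposal is correct and follows essentially the same route as the paper: the paper likewise splits off the constant terms $b_v$, charges each edge to its later endpoint (via a potential $\pi_\sigma(uv)$ equal to the slope of that endpoint), lower-bounds each edge's contribution by $\min\{a_u,a_v\}$, and observes that the non-increasing slope ordering attains this bound on every edge simultaneously. There is no gap.
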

\begin{proof}
  Let $a_v, b_v \in \R$ be such that $\varphi_v(z) = a_vz + b_v$ for every $v \in V, z \in \Z_+$.
  For an arbitrary order $\sigma$ of the vertices, define $\pi_\sigma(v) = b_v $ for each $v \in V$, and let
  \[
    \pi_\sigma(uv) =
    \begin{cases}
      a_v & \text{if $u$ precedes $v$ in $\sigma$},\\
      a_u & \text{otherwise}
    \end{cases}
  \]
  for each edge $uv \in E$.
  Then we can express the objective as
  \[
    \sum_{v \in V} \varphi_v(\cev{d}(v))
    = \sum_{v \in V} a_v \cev{d}(v) + b_v
    = \sum_{e \in E} \pi_\sigma(e) + \sum_{v \in V} \pi_\sigma(v).
  \]
  In any vertex order, for each edge $uv$, we clearly have $\pi_\sigma(uv) \geq \min\{a_u, a_v\}$, with equality achieved if the vertex with the larger slope precedes the other.
  Thus, by ordering the vertices in non-increasing order of $a_v$, every edge contributes exactly $\min\{a_u,a_v\}$, yielding an optimal ordering.
\end{proof}

As an immediate corollary, if all vertices share the same linear function $\varphi$, then every vertex order attains the same objective value, and hence all orders are optimal.
This also implies that minimizing $\sum_{v\in V} \varrho(v)^2$ and $\sum_{v\in V} \binom{\varrho(v)}{2}$ are exactly the same problems, as we stated in relation to the routing application in Section~\ref{sec:introduction}.

\subsection{A special case: minimizing \texorpdfstring{$\sum_{v \in V} \varrho(v)^2$ over acyclic orientations}{Sum rho(v)^2}}\label{sec:minSquareSum}
In this section, we study the problem of finding an acyclic orientation that minimizes the square-sum of indegrees, that is, minimizing $\sum_{v \in V} \cev{d}(v)^2$ over all vertex orders.
This is a natural special case of minimizing $\sum_{v \in V} \varphi(\cev{d}(v))$, obtained by setting $\varphi(z) = z^2$.
The NP-hardness of this problem follows immediately from Corollary~\ref{cor:minSumHLooplessNPC}.

Next, we strengthen this result by showing that the problem remains NP-hard for simple graphs.
We then propose a greedy algorithm and analyze its approximation guarantee.

\subsubsection{Hardness for simple graphs}
We now prove that minimizing the square-sum of indegrees remains NP-hard even for simple graphs.

\begin{theorem}\label{thm:squaresumNPForSimple}
  It is NP-hard to find a vertex order that minimizes the sum $\sum_{v \in V} \cev{d}(v)^2$ even for simple graphs.
\end{theorem}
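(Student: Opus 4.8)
The plan is to reduce from the multigraph case, which is already established: applying Corollary~\ref{cor:minSumHLooplessNPC} to $\varphi(z)=z^2$ shows that minimizing $\sum_{v\in V}\cev d(v)^2$ is NP-hard on loop-free multigraphs. The only feature preventing the graph $G$ produced in Theorem~\ref{thm:minSumHLooplessNPCGeneral} from being simple is the presence of parallel edges (the loops are already eliminated there in Step~2). Hence the whole task reduces to replacing each bundle of parallel edges by a \emph{simple} gadget that reproduces, in every optimal ordering, exactly the same contribution to the left-degrees of its two endpoints, while adding only a fixed constant to the objective that is independent of the ordering of the remaining vertices.

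Concretely, I would replace a bundle of $t$ parallel edges between $u$ and $v$ by a simple gadget and then prove a \emph{forcing lemma}: in every optimal ordering the gadget vertices occupy canonical positions, so that the net effect is to charge $t$ to $\cev d$ of whichever of $u,v$ comes later, exactly as the parallel bundle does. The natural first candidate is to subdivide each parallel edge once, producing $t$ internally disjoint paths $u\!-\!w_i\!-\!v$; in the intended configuration each midpoint $w_i$ lies strictly between $u$ and $v$, so $\cev d(w_i)=1$, the later endpoint receives one unit of left-degree, and the $t$ midpoints together contribute the constant $t$ to $\sum_v\cev d(v)^2$. To pin each $w_i$ into this slot I would anchor it with an auxiliary high-degree structure so that any placement of $w_i$ before both or after both of $u,v$ becomes strictly suboptimal.

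The main obstacle is precisely that the square-sum objective is a \emph{soft}, convex penalty rather than a hard constraint: a gadget only constrains optimal solutions if deviating from its canonical configuration strictly increases $\sum_v\cev d(v)^2$ by more than it could possibly save elsewhere. If $w_i$ strays, its own term changes from $1$ to $0$ or $4$ \emph{and} it redistributes left-degree between $u$ and $v$, and whether this is a net gain depends on the current left-degrees of $u$ and $v$. I would therefore have to calibrate the anchors against the global spread of degrees — which in the construction of Theorem~\ref{thm:minSumHLooplessNPCGeneral} all cluster tightly around $q=\lceil|E|/|V|\rceil$ — so that no gadget deviation can ever be profitable. Once this forcing lemma holds, the threshold argument of Claim~\ref{cl:setCover} transfers essentially verbatim: a set cover of size $k$ exists if and only if the simple graph admits an ordering with objective at most $\operatorname{opt}_{L^*}$ plus the total gadget constant.

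As a sanity check and a cleaner route to the forcing argument, I would exploit the equivalence of $\sum_v\cev d(v)^2$ and $\sum_v\binom{\cev d(v)}{2}$ noted after Theorem~\ref{thm:minLinearFunctionAcyclic}: the latter counts exactly the cherries $x\!-\!v\!-\!y$ whose center $v$ is the last of the three vertices in the ordering. A candidate gadget can then be validated combinatorially, cherry by cherry, by verifying that its canonical configuration minimizes the number of its own internal centers placed last. This viewpoint makes the strict-suboptimality of deviations more transparent than manipulating squares directly, and I expect it to be the most efficient way to discharge the delicate calibration step.
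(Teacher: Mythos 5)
Your proposal is a plan rather than a proof, and the step you yourself flag as ``delicate calibration'' is exactly where it currently fails. Take your candidate gadget: replace a bundle of $t$ parallel edges between $u$ and $v$ by $t$ paths $u\!-\!w_i\!-\!v$. Suppose $u$ precedes $v$ and $v$ has left-degree $a$ in the canonical configuration. Moving one $w_i$ to a position after $v$ changes its own term from $1$ to $4$ (cost $+3$) and changes $v$'s term from $a^2$ to $(a-1)^2$ (gain $2a-1$), for a net change of $4-2a$. In the construction of Theorem~\ref{thm:minSumHLooplessNPCGeneral} every left-degree is close to $q=\lceil|E|/|V|\rceil$, which is large, so this deviation is strictly \emph{profitable} and the gadget collapses: all midpoints get dumped after both endpoints and the bundle charges nothing to $u$ or $v$. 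The ``auxiliary high-degree anchors'' that would prevent this are never constructed, and constructing them is the entire content of the theorem; a parallel bundle has order-dependent semantics (charge $t$ to whichever endpoint is later), so the anchor must force each $w_i$ between $u$ and $v$ without itself knowing which of $u,v$ comes later, which is genuinely tricky. Until a concrete gadget with a proven forcing lemma is exhibited, the reduction does not go through.

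For comparison, the paper sidesteps this difficulty by reducing from the \emph{loopy} graph $G'$ of Step~1 of Theorem~\ref{thm:minSumHLooplessNPCGeneral} rather than from the loop-free multigraph of Step~2. A loop has order-\emph{independent} semantics (an unconditional $+1$ to $\cev d(v)$), so the gadget only has to force $x$ designated neighbors of $v$ to precede $v$. This is achieved by attaching a clique $K_M$ with $M=3d(v)$ and joining each of the $x$ new vertices $v_1,\dots,v_x$ to $v$ and to all of $K_M$: the new vertices then have degree $M+1>d(v)$, and an exchange argument (Claim~\ref{cl:minSquareSumGadget}) shows some optimal order places the gadget as $k_1,\dots,k_M,v_1,\dots,v_x,v$, adding an order-independent constant to the objective. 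If you want to salvage your route, the most promising fix is to adopt the same idea --- give each $w_i$ a large private clique so that $d(w_i)$ dominates $d(u)$ and $d(v)$ --- but you would still need to handle the two-endpoint symmetry and redo the exchange argument, so this is not a shortcut over the paper's proof.
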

\begin{proof}
  Let $G' = (V', E')$ be the graph constructed in the first step of the proof of Theorem~\ref{thm:minSumHLooplessNPCGeneral}, which contains loops.
  Without loss of generality, assume that each vertex has at least two loops; this can be ensured by increasing the parameter $q$, which simply adds an additional loop to every vertex.

  We construct a simple graph $G = (V, E)$.
  Start with a copy of $G'$, and for each vertex $v \in V'$, proceed as follows.
  Let $x$ denote the number of loops incident to $v$.
  Remove these $x$ loops and add a gadget $H_v$ defined as follows:
  Take a clique $K_M$ on $M = 3d(v)$ vertices, denoted by $k_1,\dots,k_M$.
  Introduce $x$ new vertices $v_1,\dots,v_x$.
  For each $i \in \{1, \dots, x\}$, connect $v_i$ to $v$ and to every vertex of $K_M$.
  Equivalently, $H_v$ consists of $x$ vertices adjacent both to $v$ and to all of $K_M$, forming a complete bipartite graph between $\{v_1,\dots,v_x\}$ and $K_M$.
  An illustration of the construction is given in Figure~\ref{fig:sqrtSumSimpleNPH}.

  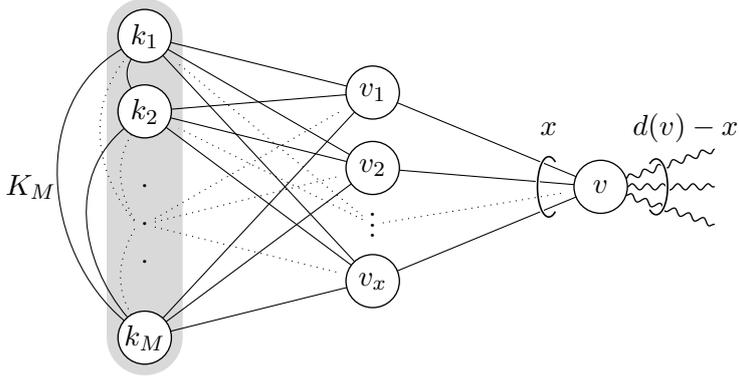
\begin{figure}[t]
    \centering
    \begin{tikzpicture}[yscale=1]
      \tikzset{VertexStyle/.append style = {minimum size = 20pt,inner sep=0pt}}
      \SetVertexMath
      \Vertex[x=0, y=2, L=k_1]{k1}
      \Vertex[x=0, y=1, L=k_2]{k2}
      \Vertex[x=0, y=-2, L=k_M]{kM}
      \Vertex[x=3, y=1.25, L=v_1]{v1}
      \Vertex[x=3, y=0.25, L=v_2]{v2}
      \Vertex[x=3, y=-1.25, L=v_x]{vx}
      \Vertex[x=6, y=0, L=v]{v}

      \begin{pgfonlayer}{background}
        \highlight{10mm}{black!15}{(k1.center) -- (kM.center)}
      \end{pgfonlayer}
      \node(k) at (-1.5,0) {$K_M$};
      \node(s1) at (0,0) {$\cdot$};
      \node(s2) at (0,-0.5) {$\cdot$};
      \node(s3) at (0,-1) {$\cdot$};
      \node(t) at (3,-0.4) {$\vdots$};

      \draw[wavy] (v) to (7.5,0);
      \draw[wavy] (v) to (7.5,0.5);
      \draw[wavy] (v) to (7.5,-0.5);
      \path[circleAroundEdges={.15}{.37}{0.33}] (v)--(7.5,0) node [above=.45cm, pos=2/3] {$d(v) - x$};

      \draw (v)--(v1);
      \draw (v)--(v2);
      \draw (v)--(vx);
      \path[circleAroundEdges={.15}{.4}{0.125}] (v)--(3,0) node [above=.45cm, pos=1/8] {\phantom{(}$x$\phantom{)}};
      \draw (v1)--(k1);
      \draw (v1)--(k2);
      \draw (v1)--(kM);
      \draw (v2)--(k1);
      \draw (v2)--(k2);
      \draw (v2)--(kM);
      \draw (vx)--(k1);
      \draw (vx)--(k2);
      \draw (vx)--(kM);

      \draw (k1) to [out=-120,in=120,looseness=1] (k2);
      \draw[dotted] (k1) to [out=-130,in=130,looseness=1] (-0.1,-0.5);
      \draw (k1) to [out=-150,in=140,looseness=1] (kM);
      \draw[dotted] (k2) to [out=-120,in=120,looseness=1] (-0.1,-0.5);
      \draw (k2) to [out=-140,in=130,looseness=1] (kM);
      \draw[dotted] (kM) to [out=120,in=-120,looseness=1] (-0.1,-0.5);

      \path [draw third paths={draw=none}{dotted}] (3, -0.5)--(v);

      \path [draw third paths={draw=none}{dotted}] (3, -0.5)--(k1);

      \path [draw third paths={draw=none}{dotted}] (3, -0.5)--(k2);

      \path [draw third paths={draw=none}{dotted}] (0, -0.5)--(v1);
      \path [draw third paths={draw=none}{dotted}] (0, -0.5)--(v2);
      \path [draw third paths={draw=none}{dotted}] (0, -0.5)--(vx);

    \end{tikzpicture}
    \caption{The gadget $H_v$ for replacing $x$ loops incident to the vertex $v$ of total degree $d(v)$.
      The wavy edges on the right illustrate the non-gadget edges.}\label{fig:sqrtSumSimpleNPH}
  \end{figure}

  \begin{claim}\label{cl:minSquareSumGadget}
    There exists an order of $G$ that minimizes $\sum_{v \in V} \cev{d}(v)^2$ such that, for each $v\in V'$, the vertices of its gadget $H_v$ appear immediately before $v$ in the order $k_1, \dots, k_M, v_1, \dots, v_x, v$.
  \end{claim}
  \begin{proof}
    Let $\sigma$ be an optimal order that minimizes the number of inversions relative to the order of the gadgets stated in the claim, that is, the number of pairs of vertices $u_1, u_2 \in H_v$ such that $u_1$ precedes $u_2$ in $\sigma$, but $u_2$ precedes $u_1$ in the order $k_1, \dots, k_M, v_1, \dots, v_x, v$ for some $v \in V'$.
    We show that in fact no inversions occur.

    Suppose for contradiction that in some gadget $H_v$, two vertices are reversed in $\sigma$ relative to their intended order $k_1, \dots, k_M, v_1, \dots, v_x, v$.
    Since the left-degree of each vertex in $H_v \setminus \{v\}$ depends only on the relative order within $H_v$, we may assume that the vertices of $H_v$ appear contiguously in $\sigma$.
    Since the vertices $k_1, \dots, k_M$ are symmetric, we can assume they appear in order of increasing index; otherwise, we could swap them without changing the optimality and reduce inversions.
    The same holds for $v_1, \dots, v_x$.
    We focus first on the order within $H_v \setminus \{v_1\}$, and we place $v_1$ correctly at the end of the argument.

    Suppose some $k_j$ appears after $v_2$ in $\sigma$, and let $j$ be the smallest such index.
    Let $u$ denote the vertex immediately before $k_j$ in $\sigma$, which must be either $v$ or $v_i$ for some $i \in \{2, \dots, x\}$.
    If $u = v$, then, since $v$ and $k_j$ are not adjacent, swapping them leaves all left-degrees unchanged, hence preserves optimality but reduces inversions, contradicting the choice of $\sigma$.
    If $u=v_i$ for some $i \in \{2, \dots, x\}$, then $\cev{d}(k_j)\geq j+1 > j \geq \cev{d}(v_i)$, so swapping again preserves optimality but reduces inversions, contradicting the choice of $\sigma$.
    In either case, we contradict the choice of $\sigma$.
    Thus, the vertices of $H_v \setminus \{v\}$ appear in $\sigma$ in the form $k_1, \dots, k_{j-1}, v_1, k_j, \dots, k_M, v_2, \dots, v_x$ for some $j \in \{1, \dots, M+1\}$.

    Next, we prove that $v$ must be the last vertex of the gadget $H_v$ in $\sigma$.
    If $v$ lies between $v_1$ and $v_x$, then we argue that swapping it with its successor $w$ cannot increase the sum.
    If $w \in K_M$, then the left-degrees stay the same, so the order remains optimal and has fewer inversions, contradicting the choice of $\sigma$.
    Otherwise, $w = v_i$ for some $i \in \{2, \dots, x\}$, and then $\cev{d}(v)$ increases by one, $\cev{d}(w)$ decreases by one.
    Since $\cev{d}(v) \leq d(v) - 1$ and $\cev{d}(w) = M + 1$ with $d(v) \leq M$, the sum strictly decreases, contradicting the optimality of $\sigma$.

    Otherwise if $v$ and exactly $\ell$ vertices of $K_M$ precede $v_1$, then moving $v$ after $v_x$ increases $\cev{d}(v)$ from $a$ to $(a + x)$, decreases $\cev{d}(v_1)$ from $(\ell + 1)$ to $\ell$, and reduces each $\cev{d}(v_i)$ from $(M + 1)$ to $M$ for $i = \{2, \dots, x\}$.
    The net decrease $\Delta$ in the sum is
    \begin{align*}
      \Delta
      &= a^2 + (\ell+1)^2 + (x - 1)(M+1)^2 - \bigl((a + x)^2 + \ell^2 + (x - 1)M^2\bigr)\\
      &= -2ax - x^2 + 2\ell + 1 + (x - 1)(2M + 1)\\
      &> -3d(v)x + 2Mx - 2M,
    \end{align*}
    where the inequality follows from $a \leq d(v)$, $x \leq d(v)$, and $\ell\ge 0$.
    Since $M = 3d(v)$ and $x \geq 2$, the value of $\Delta$ is non-negative, so the order remains optimal and has fewer inversions, contradicting the choice of $\sigma$.

    Therefore, $v$ must be the last vertex of $H_v$ in $\sigma$.
    Moreover, the vertices of the gadget $H_v$ appear in $\sigma$ in the order $k_1, \dots, k_{j-1}, v_1, k_j, \dots, k_M, v_2, \dots, v_x, v$ for some $j\in\{1,\dots, M+1\}$.
    If $j \neq M + 1$, then swapping $v_1$ and $k_j$ would not increase the sum, as $\cev{d}(v_1) = j - 1 < j = \cev{d}(k_j)$, but would reduce inversions, another contradiction.
    Hence, the vertices of $H_v$ appear in $\sigma$ in the order $k_1, \dots, k_M, v_1, \dots, v_x, v$, as claimed.
  \end{proof}

  We continue the proof of Theorem~\ref{thm:squaresumNPForSimple}.
  Let $\sigma_{H_v}$ denote the order $k_1, \dots, k_M, v_1, \dots, v_x, v$, and let $\operatorname{opt}_{H_v} = \sum_{u \in H_v \setminus \{v\}} \cev{d}_{\sigma_{H_v}}(u)^2$ denote the minimum contribution of the gadget $H_v$.
  Let $\operatorname{opt}_G$ and $\operatorname{opt}_{G'}$ denote the optimal values for $G$ and $G'$, respectively.
  To complete the proof, it suffices to show that $\operatorname{opt}_G = \operatorname{opt}_{G'} + \sum_{v \in V'} \operatorname{opt}_{H_v}$ and that if $\sigma$ is an optimal order for $G$, then its restriction to $V'$ is an optimal order for $G'$.

  First, consider an optimal order for $G'$.
  By inserting the vertices of each gadget $H_v$ immediately before $v$ in the order $\sigma_{H_v}$, we obtain an order for $G$ of objective value $\operatorname{opt}_{G'} + \sum_{v \in V'} \operatorname{opt}_{H_v}$.
  Thus, $\operatorname{opt}_G \leq \operatorname{opt}_{G'} + \sum_{v \in V'} \operatorname{opt}_{H_v}$.

  Next, let $\sigma$ be an optimal order for $G$, and let $\sigma'$ denote its restriction to $V'$.
  By Claim~\ref{cl:minSquareSumGadget}, we may assume that for each $v \in V'$, the vertices of $H_v$ appear immediately before $v$ in the order $\sigma_{H_v}$, otherwise we could rearrange them, preserving the optimality and the relative order of the vertices in $V'$.
  Thus, for all $u \in V$,
  \[
    \cev{d}_{\sigma}(u) =
    \begin{cases}
      \cev{d}_{\sigma'}(u) & \text{if } u \in V', \\
      \cev{d}_{\sigma_{H_v}}(u) & \text{if } u \in H_v \setminus \{v\} \text{ for some } v \in V'.
    \end{cases}
  \]
  It follows that $\operatorname{opt}_G = \sum_{v \in V'} \cev{d}_{\sigma'}(v)^2 + \sum_{v \in V'} \operatorname{opt}_{H_v} \geq \operatorname{opt}_{G'} + \sum_{v \in V'} \operatorname{opt}_{H_v}$.

  These together show that $\operatorname{opt}_G = \operatorname{opt}_{G'} + \sum_{v \in V'} \operatorname{opt}_{H_v}$ and that $\sigma'$ is an optimal order for $G'$.
  This completes the proof, since finding an optimal order for $G'$ is NP-hard, as shown in the first step of the proof of Theorem~\ref{thm:minSumHLooplessNPCGeneral}.
\end{proof}

\subsubsection{Greedy approximation algorithm}

We analyze the approximation performance of the following greedy algorithm:
at each iteration, the algorithm selects a vertex of minimum degree, assigns it to the last available position in the order, and removes it from the graph.
Observe that when multiple vertices share the minimum degree, the output may depend on which vertex is chosen.
We note that this algorithm is essentially the same as the algorithm for computing the degeneracy $\cev{d}_{\min}$~\cite{matula1983smallest}.

In what follows, we establish two different upper bounds on the approximation ratio of this algorithm for minimizing the square-sum of left-degrees.
The first bound provides a stronger guarantee for sparse graphs, while the second is tighter for dense graphs.

\begin{theorem}\label{thm:degApprox}
  Let $G = (V, E)$ be a graph.
  The greedy algorithm finds a $\cev{d}_{\min}(G)$\nobreakdash-approximate order for minimizing $\sum_{v \in V} \cev{d}(v)^2$.
\end{theorem}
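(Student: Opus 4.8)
The plan is to combine two simple facts about left-degrees: a pointwise bound for the greedy order coming from the degeneracy, and a universal identity that pins down the sum of left-degrees in \emph{any} order. First I would recall the defining property of the greedy (smallest-last) rule, which is exactly the classical degeneracy ordering of~\cite{matula1983smallest}: at the moment the algorithm removes a vertex $v$ and places it in the last free position, the remaining graph is precisely $G[\{\sigma_1,\dots,\sigma_i\}]$ where $v=\sigma_i$, so the degree of $v$ in that remaining subgraph equals its left-degree $\cev{d}(v)$. Since the algorithm always removes a vertex of minimum current degree, and the minimum degree in every subgraph is at most $\cev{d}_{\min}(G)$ by definition of degeneracy, we obtain the pointwise bound $\cev{d}(v)\le \cev{d}_{\min}(G)$ for every $v$ in the greedy order.

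Next I would observe the order-independent identity $\sum_{v\in V}\cev{d}(v)=|E|$, valid for \emph{every} vertex order, because each edge contributes exactly one to the left-degree of its later endpoint. Pairing this with the pointwise bound gives the upper estimate for the greedy order:
\[
  \sum_{v\in V}\cev{d}_{\mathrm{greedy}}(v)^2
  \;\le\; \cev{d}_{\min}(G)\sum_{v\in V}\cev{d}_{\mathrm{greedy}}(v)
  \;=\; \cev{d}_{\min}(G)\,|E|,
\]
where the inequality uses $\cev{d}(v)^2\le \cev{d}_{\min}(G)\,\cev{d}(v)$, which follows from $\cev{d}(v)\le \cev{d}_{\min}(G)$.

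For the matching lower bound on the optimum, I would again invoke the identity together with the elementary fact that $z^2\ge z$ for every non-negative integer $z$. Writing $\sigma^{\mathrm{opt}}$ for an optimal order, this yields
\[
  \operatorname{opt}
  \;=\; \sum_{v\in V}\cev{d}_{\sigma^{\mathrm{opt}}}(v)^2
  \;\ge\; \sum_{v\in V}\cev{d}_{\sigma^{\mathrm{opt}}}(v)
  \;=\; |E|.
\]
Dividing the greedy upper bound by this lower bound gives
\[
  \frac{\sum_{v\in V}\cev{d}_{\mathrm{greedy}}(v)^2}{\operatorname{opt}}
  \;\le\; \frac{\cev{d}_{\min}(G)\,|E|}{|E|}
  \;=\; \cev{d}_{\min}(G),
\]
completing the argument (the degenerate case $|E|=0$ is trivial, as both values are $0$).

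There is no serious obstacle here: the whole proof hinges on recognizing that $\sum_{v\in V}\cev{d}(v)=|E|$ is an invariant, so that the \emph{same} quantity $|E|$ serves simultaneously as the linear surrogate that upper-bounds the greedy cost (via the degeneracy bound) and as a lower bound on the optimal cost (via $z^2\ge z$). The only point requiring care is to state cleanly why the greedy rule realizes the degeneracy bound $\cev{d}(v)\le\cev{d}_{\min}(G)$, which is precisely the correctness of the smallest-last/degeneracy ordering and may be cited from~\cite{matula1983smallest}.
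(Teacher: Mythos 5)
Your proof is correct and follows essentially the same route as the paper: both arguments establish $\operatorname{obj}\le \cev{d}_{\min}(G)\,|E|$ from the pointwise bound $\cev{d}(v)\le\cev{d}_{\min}(G)$ of the smallest-last order, and $\operatorname{opt}\ge |E|$ from the fact that each edge contributes at least one to the square-sum in any order. The paper phrases the upper bound via an edge-charging function $\pi(e)=\cev{d}(v)$ for the later endpoint $v$, whereas you use the identity $\sum_{v}\cev{d}(v)=|E|$ together with $\cev{d}(v)^2\le\cev{d}_{\min}(G)\,\cev{d}(v)$; these are the same computation.
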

\begin{proof}
  Consider the order produced by the greedy algorithm.
  For each edge $e$, let $\pi(e) = \cev{d}(v)$, where $v$ is the endpoint of $e$ that appears later in the order.
  Then, $\sum_{e \in E} \pi(e) = \sum_{v \in V} \cev{d}(v)^2$.

  Since the algorithm is known to compute the degeneracy number by finding a $\cev{d}_{\min}(G)$ bounded order, we have $\cev{d}(v) \leq \cev{d}_{\min}(G)$ for every $v$.
  Therefore, $\pi(e) \leq \cev{d}_{\min}(G)$ for all $e \in E$, and the objective value satisfies $\operatorname{obj} \leq |E| \cdot \cev{d}_{\min}(G)$.
  On the other hand, each edge must contribute at least $1$ to the sum of squared left-degrees in any order, so $\operatorname{opt} \geq |E|$.

  Thus, the approximation ratio is bounded by $\frac{\operatorname{obj}}{\operatorname{opt}} \leq \frac{|E| \cdot \cev{d}_{\min}(G)}{|E|} = \cev{d}_{\min}(G)$.
\end{proof}

We now derive another upper bound on the approximation ratio of the greedy algorithm.

\begin{theorem}
  Let $G = (V, E)$ be a graph.
  The greedy algorithm produces a $4\eta_n$\nobreakdash-approximate order for minimizing $\sum_{v \in V} \cev{d}(v)^2$, where $\eta_n = \sum_{i=1}^n \frac{1}{i}$ is the $n^{\text{th}}$ harmonic number.
\end{theorem}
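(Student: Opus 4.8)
The plan is to compare the greedy objective term by term against a family of lower bounds on $\operatorname{opt}$, one for each subgraph peeled off by the algorithm. Index the removal order as $v_1,\dots,v_n$, where $v_i$ is the $i$-th vertex deleted (hence placed into position $n-i+1$), and let $G_i = G[\{v_i,\dots,v_n\}]$ be the graph remaining just before $v_i$ is removed, with $s_i = n-i+1$ vertices and $m_i = |E(G_i)|$ edges. Since each edge of $G_i$ is counted once, at its earlier-removed endpoint, one has $\sum_{j\ge i}\cev{d}(v_j) = m_i$; in particular $\cev{d}(v_i) = \delta_i$ equals the degree of $v_i$ in $G_i$. Because $v_i$ is chosen to have minimum degree in $G_i$, it satisfies $\delta_i \le 2m_i/s_i$, the average degree of $G_i$. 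The greedy objective is then exactly $\operatorname{obj} = \sum_{i=1}^n \delta_i^2$.

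The heart of the argument is the lower bound $\operatorname{opt} \ge m_i^2/s_i$ for every $i$. To establish it, fix an optimal ordering $\tau$ of $G$ and restrict it to the vertex set of $G_i$. For any $v \in V(G_i)$, its left-degree inside $G_i$ under the restriction is at most its left-degree $\cev{d}_\tau(v)$ in $G$, since the $G_i$-neighbors of $v$ preceding it form a subset of its $G$-neighbors preceding it. Hence $\operatorname{opt} = \sum_{v\in V}\cev{d}_\tau(v)^2 \ge \sum_{v\in V(G_i)}\cev{d}_\tau(v)^2$ dominates the square-sum of the restricted left-degrees, which by the Cauchy--Schwarz inequality is at least $m_i^2/s_i$, as these restricted left-degrees are nonnegative and sum to $m_i$.

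Combining the two bounds finishes the proof. For each $i$,
\[
  \delta_i^2 \;\le\; \left(\frac{2m_i}{s_i}\right)^{2} \;=\; \frac{4}{s_i}\cdot\frac{m_i^2}{s_i} \;\le\; \frac{4}{s_i}\,\operatorname{opt}.
\]
Summing over $i$ and using that $s_i = n-i+1$ runs over $\{1,\dots,n\}$ yields $\operatorname{obj} = \sum_i \delta_i^2 \le 4\operatorname{opt}\sum_{i=1}^n \tfrac{1}{s_i} = 4\eta_n\operatorname{opt}$, as claimed.

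I expect the only genuinely delicate point to be the per-subgraph lower bound $\operatorname{opt} \ge m_i^2/s_i$: the rest (the minimum-degree-at-most-average-degree estimate and the harmonic summation) is routine. In particular, one must verify that restricting an ordering to an induced subgraph cannot increase left-degrees, and check that the argument nowhere assumes $G$ is simple --- it does not, which is precisely why the final ratio depends only on $n$ rather than on $|E|$, and avoids the weaker $H_{|E|}$-type bound a more naive telescoping would give.
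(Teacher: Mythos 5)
Your proof is correct and follows essentially the same route as the paper's: bound the greedy left-degree by the average degree of the remaining subgraph, lower-bound $\operatorname{opt}$ by $m_i^2/s_i$ via restriction of an optimal order plus Jensen/Cauchy--Schwarz, and sum the harmonic series. The only difference is cosmetic (you index by removal order rather than by position), and your explicit verification that restricting an ordering to an induced subgraph cannot increase left-degrees is a point the paper states more tersely.
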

\begin{proof}
  Let $v_1, \dots, v_n$ denote the order produced by the greedy algorithm, and let $\operatorname{obj}$ denote its objective value.
  For $i = 1, \dots, n$, define $V_i = \{ v_1, \dots, v_i \}$, and let $G[V_i] = (V_i, E_i)$ be the subgraph of $G$ induced by $V_i$.
  Let $m_i = |E_i|$ and $\operatorname{opt}(V_i)$ denote the minimum square-sum of left-degrees in $G[V_i]$.

  We first give a lower bound:
  \begin{equation}\label{eq:lowerBoundForSquareSum}
    \operatorname{opt}(V) \geq \operatorname{opt}(V_i) \geq \sum_{v \in V_i} \left( \frac{m_i}{i} \right)^2 = \frac{m_i^2}{i},
  \end{equation}
  where the first inequality follows because any optimal order for $G$ induces an order on $V_i$, and the second by Jensen's inequality and the convexity of the square function.

  For the greedy order, we have
  \[
    \operatorname{obj} = \sum_{i=1}^n \cev{d}(v_i)^2 \leq \sum_{i=1}^n \left( \frac{2 m_i}{i} \right)^2 = 4 \sum_{i=1}^n \frac{1}{i} \frac{m_i^2}{i},
  \]
  where the inequality follows because $v_i$ is a vertex with minimum degree in $G[V_i]$, in which the average degree is $\frac{2m_i}{i}$.

  Applying~\eqref{eq:lowerBoundForSquareSum}, this gives
  \[
    \operatorname{obj} \leq 4 \sum_{i=1}^n \frac{1}{i} \operatorname{opt}(V) = 4 \eta_n \operatorname{opt}(V).
  \]
  \vspace{-8mm}

\end{proof}

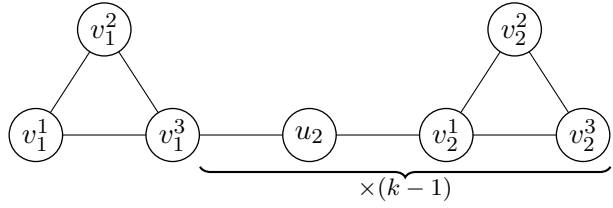
\begin{figure}[t]
  \centering
  \begin{tikzpicture}[yscale=1.4,xscale=0.9]
    \tikzset{VertexStyle/.append style = {minimum size = 20pt,inner sep=0pt}}
    \SetVertexMath
    \Vertex[x=0,y=0, L=v_1^1]{v1}
    \Vertex[x=1,y=1, L=v_1^2]{v2}
    \Vertex[x=2,y=0, L=v_1^3]{v3}
    \Vertex[x=4,y=0, L=u_2]{v4}
    \Vertex[x=6,y=0, L=v_2^1]{v5}
    \Vertex[x=7,y=1, L=v_2^2]{v6}
    \Vertex[x=8,y=0, L=v_2^3]{v7}

    \draw (v1)--(v2);
    \draw (v2)--(v3);
    \draw (v3)--(v1);
    \draw (v5)--(v6);
    \draw (v6)--(v7);
    \draw (v7)--(v5);
    \draw (v3)--(v4);
    \draw (v4)--(v5);

    \draw [thick, decorate,decoration={brace, amplitude=4pt}, xshift=0cm,yshift=-0.3cm]
    (8.4,0) -- (2.4,0) node [black,midway,xshift=0cm,yshift=-0.3cm]
    {\footnotesize $\times (k - 1)$};

  \end{tikzpicture}
  \caption{Illustration of the graph sequence $G_k$ for which the approximation ratio of the greedy algorithm approaches $\frac{9}{7}$.
    Here, $k$ denotes the number of triangles.
    Each $v_i^j$ represents the $i^\text{th}$ vertex of the $j^{\text{th}}$ triangle ($j \in \{1, 2, 3\}$), and $u_i$ connects $v_{i-1}^3$ to $v_i^1$ for $i \in \{2, \dots, k\}$.}\label{fig:squareSum9/7Approx}
\end{figure}
The graph family shown in Figure~\ref{fig:squareSum9/7Approx} provides an example where the worst-case approximation ratio of the greedy algorithm approaches $\frac{9}{7}$, which is the best lower bound we have found so far.
To analyze the worst case, we compare the performance of the greedy algorithm against the optimal order for the graph shown in Figure~\ref{fig:squareSum9/7Approx}, consisting of $k$ triangles and paths of length two between them.
Consider the execution of the greedy algorithm that orders the vertices as follows: it begins with the vertices $v_i^1, v_i^2, v_i^3$ for $i = 1, \dots, k$, followed by the vertices $u_i$ for $i = 2, \dots, k$.
Note that for every $\ell \in \{1, \dots, n\}$, the $\ell^\text{th}$ vertex in this order has minimum degree in the subgraph induced by the first $\ell$ vertices.
Thus, this order is a possible output of the greedy algorithm, and the objective value is $k(0^2 + 1^2 + 2^2) + (k-1) (2^2) = 9k - 4$.
On the other hand, consider the following optimal order: list the vertices $v_1^1$, $v_1^2$, $v_1^3$ first, and then for each $i = 2, \dots, k$, list $u_i$, $v_i^1$, $v_i^2$, $v_i^3$ in this order.
The corresponding objective value is $0^2 + 1^2 + 2^2 + (k-1)(1^2 + 1^2 + 1^2 + 2^2) = 7k - 2$.
Therefore, the approximation ratio is $\frac{9k - 4}{7k - 2}$, which tends to $\frac{9}{7}$ as $k$ goes to infinity.

We tested the approximation ratio of the worst solution found by the greedy algorithm for every simple graph on at most $12$ vertices, and found that the ratio for the graph $G_k$ shown in Figure~\ref{fig:squareSum9/7Approx} serves as an upper bound for every simple graph on at most $(4k - 1)$ vertices.
It remains an open question whether the worst-case approximation ratio of the greedy algorithm for simple graphs is indeed $\frac{9}{7}$.

\subsection{A special case: lexicographically optimal acyclic orientations}\label{sec:decMinIncMax}
This section investigates the computational complexities of Problems~\ref{prob:decMin} and~\ref{prob:incMax}, along with two complementary problems, namely the \emph{increasingly minimal (inc-min)} and \emph{decreasingly maximal (dec-max)} acyclic orientation problems.
As a relaxation of the dec-min acyclic orientation problem, we also consider the problem of minimizing the maximum (weighted) indegree.

We proceed with a simple observation about acyclic orientations that connects the dec-min and inc-max objectives to minimizing $\sum_{v\in V} \varphi(\varrho(v))$ for a discrete strictly convex function~$\varphi$.
\begin{claim}\label{cl:decMinIncMaxAreSpecial}
  The dec-min and inc-max acyclic orientation problems are special cases of minimizing $\sum_{v\in V} \varphi(\varrho(v))$ over acyclic orientations, corresponding to the choices $\varphi(z) =|V|^z$ and $\varphi(z) = |V|^{-z}$, respectively.
\end{claim}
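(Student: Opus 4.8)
The plan is to show that for each of the two objectives, the exponential choice of $\varphi$ assigns overwhelmingly more weight to the ``extreme'' indegree than to any combination of smaller ones, so that lexicographic comparison of sorted indegree vectors is faithfully reproduced by comparison of the scalar sums $\sum_{v\in V}\varphi(\varrho(v))$. Since every indegree satisfies $0 \leq \varrho(v) \leq d(v) < |V|$ (and more crudely $\varrho(v) \le |E| $, but the bound $\varrho(v) < |V|$ or at worst $\varrho(v)\le |V|-1$ suffices after noting indegrees are bounded by the number of vertices minus one in a simple setting; for multigraphs one argues with the number of edges), the key is a separation-of-scales estimate showing the base $|V|$ is large enough.

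First, for the \textbf{dec-min} case with $\varphi(z)=|V|^z$: I would fix two acyclic orientations $D$ and $D'$ and let $\mathbf{x}$ and $\mathbf{x}'$ denote their indegree vectors sorted in non-increasing order. Suppose $\mathbf{x}$ is lexicographically smaller than $\mathbf{x}'$, and let $j$ be the first coordinate where they differ, so $x_j < x_j'$, i.e.\ $x_j \le x_j' - 1$. The goal is to prove $\sum_{v}|V|^{\varrho_D(v)} < \sum_{v}|V|^{\varrho_{D'}(v)}$. The main contribution comes from the single term $|V|^{x_j'}$ in $D'$: since the first $j-1$ sorted coordinates agree, the difference $\sum_v |V|^{x_v'} - \sum_v |V|^{x_v}$ is at least $|V|^{x_j'}$ minus the total mass of all coordinates of $D$ from position $j$ onward, each of which is at most $|V|^{x_j} \le |V|^{x_j'-1}$. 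Bounding that total mass by $|V|\cdot|V|^{x_j'-1} = |V|^{x_j'}$ is not quite enough, so I would instead bound the number of such coordinates by $|V|$ and each term by $|V|^{x_j'-1}$, then exploit that there are at most $|V|-1$ remaining positions (position $j$ of $D'$ is already accounted for), giving total mass at most $(|V|-1)|V|^{x_j'-1} < |V|^{x_j'}$. This strict inequality closes the argument.

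For the \textbf{inc-max} case I would argue symmetrically with $\varphi(z)=|V|^{-z}$: now sort indegrees in non-decreasing order, and note that minimizing $\sum_v |V|^{-\varrho(v)}$ puts the dominant weight on the \emph{smallest} indegree, so a lexicographically larger sorted-ascending vector (larger small indegrees) yields a strictly smaller sum. The same geometric-series separation works because decreasing an indegree by one multiplies its $\varphi$-value by a factor of $|V|$, dwarfing the aggregate effect of the at most $|V|-1$ other terms. One clean way to present both directions at once is to verify the equivalence ``$\mathbf{x}$ lexicographically precedes $\mathbf{x}'$ in the relevant order $\iff \sum_v \varphi(\varrho)$ is strictly smaller,'' which immediately implies that a minimizer of the scalar objective is exactly a dec-min (resp.\ inc-max) orientation.

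The main obstacle is getting the separation constant exactly right: the naive bound ``number of remaining terms $\times$ largest remaining term'' must beat one unit of the dominant term, and this requires the base to strictly exceed the number of competing coordinates. I would therefore be careful to count that after the first difference at sorted position $j$, there are at most $|V|-1$ coordinates left to bound (not $|V|$), so that $(|V|-1)|V|^{x_j'-1} < |V|^{x_j'}$ holds with the correct strict inequality; this is precisely why the base $|V|$ (rather than something smaller) is chosen. A secondary point to handle cleanly is that the comparison of sorted vectors interacts correctly with the permutation-invariance of the sum $\sum_v \varphi(\varrho(v))$, which is immediate since the sum does not depend on vertex labels, only on the multiset of indegrees.
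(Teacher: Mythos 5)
Your overall strategy is the same as the paper's: a separation-of-scales argument showing that with base $|V|$, one unit of the dominant exponential term outweighs the aggregate of all competing terms, so that comparing $\sum_{v}\varphi(\varrho(v))$ faithfully reproduces the lexicographic comparison of the sorted indegree vectors. The paper's dec-min argument is structured exactly this way (it upper-bounds $D$'s sum by $\sum_{i<k} n^{\ell_i}+n^{\ell_k+1}$ and strictly lower-bounds $D'$'s sum by the same quantity), and it likewise dispatches the inc-max case as ``analogous.''

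However, the step where you extract the \emph{strict} inequality does not go through as written. You bound the difference from below by $|V|^{x'_j}$ minus the total mass of $D$'s sorted coordinates from position $j$ onward, and you claim there are at most $|V|-1$ such coordinates because ``position $j$ of $D'$ is already accounted for.'' That justification conflates the two vectors: the term you kept is a coordinate of $D'$, while the mass you subtract consists of coordinates of $D$, of which there are $n-j+1$. When $j=1$ --- i.e., when the maximum indegrees already differ --- there are exactly $|V|$ such coordinates, your bound degenerates to $|V|\cdot|V|^{x'_1-1}=|V|^{x'_1}$, and you obtain only $\geq 0$ rather than $>0$. The conclusion is still true, but the strictness must come from elsewhere: either from the discarded positive terms $\sum_{i>j}|V|^{x'_i}$ on the $D'$ side (this is what the paper does, via the strict inequality $\sum_{i=1}^{n} n^{\ell'_i} > \sum_{i=1}^{k} n^{\ell'_i}$, justified by $k<n$), or from the observation that the two indegree vectors both sum to $|E|$, hence if distinct they differ in at least two sorted positions, so the first disagreement satisfies $j<n$. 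Either repair is one line, but as written the argument fails precisely in the case you flagged as ``the main obstacle.''
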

\begin{proof}
  We present the proof for the dec-min case only, as the other one follows analogously.
  Consider two distinct orientations $D$ and $D'$ of a given graph $G = (V, E)$ with non-increasingly ordered indegree sequences $\ell_1 \geq \dots \geq \ell_n$ and $\ell'_1 \geq \dots \geq \ell'_n$, respectively.
  We first show that if $D$ is better than $D'$ for the dec-min objective, then $\sum_{i=1}^n n^{\ell_i} < \sum_{i=1}^n n^{\ell'_i}$, where $n = |V|$.
  Let $k$ denote the smallest index such that $\ell_i = \ell'_i$ for all $i < k$, and $\ell_k < \ell'_k$.
  Observe that the following upper bound holds for the objective value of $D$:
  \begin{equation}\label{eq:objDecMin}
    \sum_{i=1}^n n^{\ell_i} \leq \sum_{i=1}^{k-1} n^{\ell_i}+(n-k+1)n^{\ell_k} \leq \sum_{i=1}^{k-1} n^{\ell_i}+n^{\ell_k+1}.
  \end{equation}
  The first inequality follows by increasing the last $(n-k+1)$ indegrees to $\ell_k$, and the second inequality by simple rearrangement.
  For the objective value of $D'$, we obtain a complementary lower bound:
  \begin{equation}\label{eq:objDecRandomOrder}
    \sum_{i=1}^n n^{\ell'_i}>\sum_{i=1}^k n^{\ell'_i} = \sum_{i=1}^{k-1} n^{\ell_i}+n^{\ell'_k}\geq \sum_{i=1}^{k-1} n^{\ell_i}+n^{\ell_k+1}.
  \end{equation}
  The strict inequality holds because two distinct indegree sequences always differ in at least two positions, thus $k < n$; and the other two steps follow by the definition of $k$ and by simple rearrangements.
  Combining~\eqref{eq:objDecMin} and~\eqref{eq:objDecRandomOrder}, we obtain that $D$ is also strictly better than $D'$ for the objective $\min\sum_{v\in V} \varphi(\varrho(v))$ for $\varphi(z) = n^z$.
  This immediately implies that comparing two orientations with respect to the dec-min objective and to minimizing $\sum_{v\in V} \varphi(\varrho(v))$ for $\varphi(z) =|V|^z$ yields the same result, which completes the proof.
\end{proof}
Note that this proof also shows that the claim does not only hold over acyclic orientations but also over an arbitrary subset of orientations.

\medskip
Recall that, when orientations need not be acyclic, the dec-min and inc-max orientation problems are equivalent.
This raises the natural question of whether this equivalence continues to hold for acyclic orientations.
We answer this in the negative by exhibiting a counterexample.
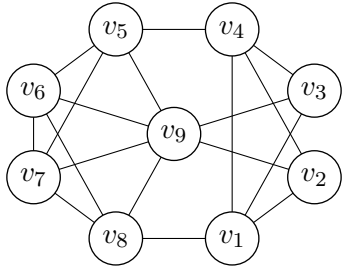
\begin{figure}[t]
  \centering
  \begin{tikzpicture}[yscale=0.75]
    \tikzset{VertexStyle/.append style = {minimum size = 20pt,inner sep=0pt}}
    \SetVertexMath
    \Vertex[x=0, y=0, L=v_9]{v8}
    \SetVertexNoLabel
    \grEmptyCycle[form=1,x=0,y=0,RA=2, rotation=-67.5, prefix=v]{8}
    \AssignVertexLabel{v}{$v_1$, $v_2$, $v_3$, $v_4$, $v_5$, $v_6$, $v_7$, $v_8$}

    \draw (v0)--(v1);
    \draw (v2)--(v3);
    \draw (v3)--(v4);
    \draw (v4)--(v5);
    \draw (v5)--(v6);
    \draw (v6)--(v7);
    \draw (v7)--(v0);
    \draw (v0)--(v2);
    \draw (v0)--(v3);
    \draw (v1)--(v3);
    \draw (v4)--(v6);
    \draw (v5)--(v7);

    \draw (v8)--(v1);
    \draw (v8)--(v2);
    \draw (v8)--(v4);
    \draw (v8)--(v5);
    \draw (v8)--(v6);
    \draw (v8)--(v7);
  \end{tikzpicture}
  \caption{The smallest simple graph for which the acyclic dec-min and inc-max problems are not the same.}\label{fig:decMinIncMaxDifference}
\end{figure}
\begin{claim}\label{cl:decMinIncMaxDifference}
  For the simple graph shown in Figure~\ref{fig:decMinIncMaxDifference}, the sets of optimal acyclic orientations for the dec-min and inc-max problems are disjoint.
\end{claim}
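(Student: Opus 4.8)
The plan is to reduce the statement to a comparison of two indegree multisets. An orientation is dec-min optimal if and only if its indegree sequence sorted non-increasingly equals the (unique) lexicographically minimal achievable such sequence, so all dec-min optimal orientations share a single indegree multiset $M_{\mathrm{dm}}$; likewise all inc-max optima share a multiset $M_{\mathrm{im}}$. Since an orientation has a single indegree multiset, the two optimal sets are disjoint precisely when $M_{\mathrm{dm}}\neq M_{\mathrm{im}}$. I would in fact prove the stronger fact that these multisets have different maximum entry (maximum indegree $3$ versus $4$), which makes disjointness immediate.

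First I would record the elementary constraints in the vertex-ordering formulation. In any ordering $\sum_v \cev d(v)=|E|=18$, the first vertex has left-degree $0$, and the second has left-degree at most $1$; hence every achievable multiset contains a $0$ together with a second entry that is at most $1$. A smallest-last ordering together with a peeling argument shows the degeneracy is $3$, so the minimum possible maximum indegree over acyclic orientations is $3$. I would then exhibit two explicit orderings, one realizing the sorted vector $(3,3,3,3,2,2,1,1,0)$ and one realizing $(0,1,2,2,2,2,2,3,4)$; checking their left-degrees is routine.

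The heart of the argument, and the main obstacle, is an infeasibility lemma: no acyclic orientation has indegree multiset $\{0,1,2,2,2,2,3,3,3\}$, the naive ``balanced'' optimum of maximum indegree $3$ with only three $3$'s. I would prove this using the dense core. Four vertices (in the figure, $v_6,v_7,v_8,v_9$) induce a $K_4$, so in any ordering their within-core left-degrees are $0,1,2,3$; adding $v_5$, which is adjacent to three of them, one checks that the five-vertex subgraph $B=\{v_5,\dots,v_9\}$ admits exactly one within-$B$ left-degree distribution of maximum $3$, namely $\{0,1,2,3,3\}$. Hence any acyclic orientation with maximum indegree at most $3$ has at least two indegree-$3$ vertices inside $B$. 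On the other hand, in such an orientation the last vertex must have degree at most $3$, so it is $v_2$ or $v_3$ (the only degree-$3$ vertices), and the penultimate vertex is then forced to have left-degree $3$ and to lie outside $B$ (a core vertex placed in the last two positions would exceed indegree $3$). This yields two further indegree-$3$ vertices outside $B$, so any maximum-indegree-$3$ orientation has at least four vertices of indegree $3$, ruling out $\{0,1,2,2,2,2,3,3,3\}$.

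Finally I would assemble the pieces. The lemma forces $M_{\mathrm{dm}}$ to have maximum indegree $3$ with at least four $3$'s, and a short counting argument (using the forced $0$ and a second entry at most $1$) pins it to the achievable $(3,3,3,3,2,2,1,1,0)$. For inc-max, the lemma shows that the only maximum-$3$ ascending vector with second-smallest entry $1$ is the infeasible $(0,1,2,2,2,2,3,3,3)$; consequently every feasible maximum-$3$ orientation has third-smallest indegree at most $1$, whereas the explicit orientation above attains third-smallest indegree $2$ with maximum indegree $4$ and is lexicographically optimal among feasible ascending vectors. Thus $M_{\mathrm{im}}$ has maximum indegree $4\neq 3$, so $M_{\mathrm{dm}}\neq M_{\mathrm{im}}$ and the sets of optimal acyclic orientations for the dec-min and inc-max problems are disjoint.
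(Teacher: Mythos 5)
Your proposal is correct and rests on the same two combinatorial facts as the paper's proof: in any $3$\nobreakdash-bounded ordering the block $B=\{v_5,\dots,v_9\}$ forces two vertices of left-degree $3$ and the last two positions (necessarily filled from $\{v_1,v_2,v_3,v_4\}$) force two more, which pins the dec-min sequence to $3,3,3,3,2,2,1,1,0$; and an explicit ordering realizes the ascending sequence $0,1,2,2,2,2,2,3,4$. The paper finishes immediately by lexicographic comparison: every dec-min optimum has ascending sequence $0,1,1,2,2,3,3,3,3$, which the exhibited ordering beats, so no dec-min optimum is inc-max optimal. You instead take the longer route of excluding \emph{every} maximum-$3$ orientation from inc-max optimality so as to conclude that $M_{\mathrm{im}}$ has maximum entry at least $4$; this is more than disjointness requires, and one sentence there is inaccurate as written: it does not follow from your lemma that every feasible maximum-$3$ orientation has third-smallest indegree at most $1$, since the vector $(0,0,2,2,2,3,3,3,3)$ satisfies all of your derived constraints (at least four $3$'s, sum $18$, a $0$, a second entry at most $1$) yet has third-smallest entry $2$. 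The correct dichotomy is that such an orientation has second-smallest entry $0$ or third-smallest entry at most $1$; either way its ascending vector lies lexicographically below $0,1,2,\dots$, so your conclusion survives with a trivial fix. Your full case analysis of $B$ (yielding the exact within-$B$ multiset $\{0,1,2,3,3\}$) is also finer than needed: the paper extracts the same two forced $3$'s simply from the facts that the last vertex of $B$ has within-$B$ left-degree equal to its degree in $B$ and that deleting $v_5$ or $v_8$ from $B$ leaves no vertex of degree below $3$.
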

\begin{proof}
  We adopt the vertex ordering perspective of the dec-min and inc-max problems, and proceed in two steps.
  First, we show that the non-increasing left-degree sequence of dec-min orders is $3,3,3,3,2,2,1,1,0$.
  Second, we prove that there exists an inc-max order whose non-decreasing left-degree sequence is $0,1,2,2,2,2,2,3,4$.
  Since the latter is better for inc-max than the (reverse of the) former, the set of optimal acyclic orientations for the two problems must be~disjoint.

  For the first part, observe that there exist $3$\nobreakdash-bounded orders for this graph; consequently, any dec-min order must be $3$\nobreakdash-bounded.
  Initially, only $v_2$ and $v_3$ have degree at most $3$; after deleting one of them, the only additional vertex that turns to be of degree at most $3$ is either $v_1$ or $v_4$.
  Therefore, the last two vertices in any $3$\nobreakdash-bounded order must belong among $v_1,v_2,v_3,v_4$, and both of these last two vertices have left-degree $3$.
  Next, consider the subgraph induced by the vertices $v_5, \dots, v_9$.
  Observe that each vertex has at least three neighbors within this induced subgraph; only $v_5$ and $v_8$ have exactly three, and deleting one of them does not create a vertex with degree smaller than $3$.
  Therefore, in any $3$\nobreakdash-bounded order, the last two vertices among $v_5,\dots,v_9$ must have left-degree at least $3$.
  Combining these observations with the fact that the first vertex has left-degree $0$, we conclude that the non-increasing left-degree sequence of any dec-min order is lexicographically at least $3,3,3,3,2,2,1,1,0$.
  The order $v_9, v_8, v_7, v_6, v_5, v_4, v_3, v_1, v_2$ attains this, so this left-degree sequence is the optimal dec-min sequence.

  For the second part, one can verify directly that the order $v_1, v_4, v_2, v_3, v_9, v_5, v_6, v_7, v_8$ produces the non-decreasing left-degree sequence $0,1,2,2,2,2,2,3,4$.
  This sequence is lexicographically larger than the non-decreasing sequence of dec-min orders, namely $0,1,1,2,2,3,3,3,3$.
  Therefore, the sets of optimal dec-min and inc-max orientations are disjoint.
\end{proof}

Note that the same example demonstrates that the optimal solutions for $\varphi(z) = z^2$ differ from those for the dec-min and inc-max problems over acyclic orientations.
It is also worth noting that the maximum indegree in every dec-min acyclic orientation is at most the degeneracy number $\cev{d}_{\min}(G)$, but the example in Figure~\ref{fig:decMinIncMaxDifference} demonstrates that this does not necessarily apply for inc-max acyclic orientations.

\medskip
Theorem~\ref{thm:minSumHLooplessNPCGeneral} and Claim~\ref{cl:decMinIncMaxAreSpecial} immediately imply that both the dec-min and inc-max acyclic orientation problems are NP-hard for multigraphs.
In what follows, we give an in-depth complexity analysis of the dec-min and inc-max acyclic orientation problems under indegree bounds, and restricted to simple graphs.

\subsubsection{Complexity}\label{sec:decMinIncMax:complexity}
We call an ordering \emph{$k$\nobreakdash-bounded} if the left-degree of each vertex is at most $k$.
Next, we consider the $k$\nobreakdash-bounded dec-min and inc-max ordering problems.
For $k = 1$, it is easy to see that a graph $G = (V, E)$ has a $1$\nobreakdash-bounded order if and only if it is a forest, moreover, in any $1$\nobreakdash-bounded order there are exactly $|E|$ vertices with left-degree $1$ and $(|V| - |E|)$ vertices with left-degree~$0$.
Therefore, any $1$\nobreakdash-bounded order is both a dec-min and an inc-max solution, meaning that we can solve the problems in polynomial time.
Next, we prove that for $k = 2$, the dec-min and inc-max $k$\nobreakdash-bounded orderings also coincide.
We then analyze the computational complexity of finding a dec-min or inc-max $k$\nobreakdash-bounded order, and show that both problems are NP-hard for any $k \geq 2$ as well as in the unrestricted case (without bound on left-degrees).
Notably, the NP-hardness of finding a dec-min $k$\nobreakdash-bounded order was previously established for all odd $k \geq 5$ and also for the unbounded case~\cite{borradaile2017egalitarian}.
We strengthen this result by proving NP-hardness for all $k \geq 2$.

\begin{lemma}\label{lem:2BoundedDecMinIncMax}
  For every graph $G = (V, E)$, the dec-min and inc-max $2$\nobreakdash-bounded orders coincide.
\end{lemma}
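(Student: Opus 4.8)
The plan is to reduce both the dec-min and the inc-max objectives, restricted to $2$-bounded orders, to a single one-parameter optimization, and then to observe that this optimization has the same set of optimal orders under both criteria.

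First I would record the basic counting. In any $2$-bounded order every left-degree lies in $\{0,1,2\}$, so writing $n_j$ for the number of vertices with $\cev{d}(v)=j$, the identity $\sum_{v\in V}\cev{d}(v)=|E|$ together with $n_0+n_1+n_2=|V|$ gives the two linear relations $n_1+2n_2=|E|$ and $n_0+n_1+n_2=|V|$. Solving these yields $n_1=|E|-2n_2$ and $n_0=|V|-|E|+n_2$, so the entire multiset of left-degrees of a $2$-bounded order is determined by the single parameter $n_2$; as $n_2$ increases by one, $n_1$ decreases by two and $n_0$ increases by one.

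Next I would compare the two lexicographic objectives as functions of $n_2$. For dec-min, the non-increasing left-degree sequence consists of $n_2$ copies of $2$, then $n_1$ copies of $1$, then $n_0$ copies of $0$; comparing the distributions for $n_2$ and $n_2+1$, position $n_2+1$ carries a value at most $1$ in the former but exactly $2$ in the latter, so the smaller value of $n_2$ gives the lexicographically smaller sequence. Hence the dec-min $2$-bounded orders are exactly those minimizing $n_2$. For inc-max, the non-decreasing sequence consists of $n_0$ copies of $0$, then $n_1$ copies of $1$, then $n_2$ copies of $2$; comparing the distributions for $n_0$ and $n_0+1$, position $n_0+1$ carries a value at least $1$ in the former but $0$ in the latter, so the smaller value of $n_0$ gives the lexicographically larger sequence. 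Since $n_0=|V|-|E|+n_2$, minimizing $n_0$ is the same as minimizing $n_2$, so the inc-max $2$-bounded orders are also exactly those minimizing $n_2$.

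Finally I would conclude that, because both objectives depend only on the sorted left-degree multiset, which is determined by $n_2$, and both are optimized precisely by the $2$-bounded orders attaining the minimum feasible value of $n_2$, the two optimal sets coincide. I do not expect a genuine obstacle: the whole content is the bookkeeping that the $2$-bounded left-degree distribution is a one-parameter family and that each lexicographic criterion is monotone in that parameter. The only point requiring a little care is verifying that the two sorted sequences first diverge at the claimed positions when $n_2$ changes by one, which is why I phrase the comparisons as ``value at most $1$'' and ``value at least $1$'' rather than committing to an exact entry.
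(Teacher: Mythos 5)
Your proposal is correct and follows essentially the same route as the paper: both arguments reduce everything to the counting identities $n_1+2n_2=|E|$ and $n_0+n_1+n_2=|V|$, deduce that $n_0-n_2$ is a constant over all $2$-bounded orders, and conclude that minimizing $n_2$ (dec-min) and minimizing $n_0$ (inc-max) select the same orders. The only difference is that you spell out the lexicographic monotonicity in $n_2$ explicitly, which the paper simply asserts.
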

\begin{proof}
  In any $2$\nobreakdash-bounded order, the left-degree of each vertex is $0$, $1$, or $2$, and the sum of the left-degrees is $|E|$.
  A dec-min ordering minimizes the number of vertices with left-degree $2$, while an inc-max ordering minimizes the number of vertices with left-degree $0$.

  Let $\sigma$ be a $2$\nobreakdash-bounded order, and let $n_i^{\sigma}$ denote the number of vertices with left-degree $i$ for $i = 0, 1, 2$.
  Since every vertex has left-degree at most $2$, by definition, $n_1^{\sigma} = |V| - n_0^{\sigma} - n_2^{\sigma}$ and the sum of the left-degrees is $|E| = n_1^{\sigma} + 2 n_2^{\sigma} = |V| - n_0^{\sigma} + n_2^{\sigma}$.
  Now, let $\sigma$ and $\sigma'$ be two $2$\nobreakdash-bounded orders.
  Since both have the same edge count, $|V| - n_0^{\sigma} + n_2^{\sigma} = |V| - n_0^{\sigma'} + n_2^{\sigma'}$, which simplifies to $n_2^{\sigma} - n_2^{\sigma'} = n_0^{\sigma} - n_0^{\sigma'}$.

  Thus, minimizing the number of vertices with left-degree $2$ is equivalent to minimizing the number of those with left-degree $0$.
  Therefore, the dec-min and inc-max $2$\nobreakdash-bounded orders coincide.
\end{proof}

Next, we prove the NP-hardness of the inc-max $k$\nobreakdash-bounded ordering problem for any $k \geq 2$.
\begin{theorem}\label{thm:sortedLexMaxNPC}
  For every integer $k \geq 2$, minimizing the number of vertices with left-degree $0$ in a $k$\nobreakdash-bounded order is NP-hard even for simple graphs.
\end{theorem}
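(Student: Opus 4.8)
The plan is to start from the reformulation implicit in the statement. A vertex has left-degree $0$ in an ordering exactly when it is a source of the corresponding acyclic orientation, so we are asking for a $k$-bounded acyclic orientation (every in-degree at most $k$) that minimizes the number of sources, equivalently one that maximizes the number of vertices receiving an incoming edge. A first useful observation is that the analogous problem \emph{without} acyclicity is polynomial: taking $\varphi_v$ with $\varphi_v(0)=1$, $\varphi_v(z)=0$ for $1\le z\le k$, and $\varphi_v(z)=(z-k)M$ for $z>k$ gives a discrete convex function (its increments $-1,0,\dots,0,M,M,\dots$ are non-decreasing), so minimizing $\sum_v\varphi_v(\varrho(v))$ via Theorem~\ref{thm:nonAcyclicWithFlow} already minimizes the number of sources subject to in-degree at most $k$. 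Hence all of the hardness must be produced by acyclicity, and the reduction has to exploit it essentially.

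I would reduce from a suitable NP-complete constraint problem — concretely a SAT- or covering-type problem, in the spirit of the set-cover reduction behind Theorem~\ref{thm:minSumHLooplessNPCGeneral} — and encode each boolean/selection decision as a local orientation choice inside a gadget. The elementary building block is that in any $k$-bounded order a clique $K_{k+1}$ receives left-degrees $0,1,\dots,k$, so among its vertices exactly one has no earlier clique-neighbor; consequently a $K_{k+1}$ contributes \emph{at most one} global source, located at the clique vertex ordered first. By joining some clique vertices to enough external vertices that placing them early would either violate the $k$-bound or create an avoidable source, one restricts the source to a chosen subset, realizing a genuine binary choice. Edges whose orientation is pinned by acyclicity then transmit these choices between gadgets, and the clause/element gadgets are arranged so that one surplus source appears exactly when the encoded constraint is unmet. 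Setting the threshold $t$ equal to the baseline number of forced sources, an order with at most $t$ sources exists if and only if the original instance is a yes-instance.

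This clique-based construction handles every $k\ge 2$ uniformly: for $k=2$ the block $K_{k+1}$ is just a triangle (giving left-degrees $0,1,2$ and one source), and the combinatorics of the gadgets are identical for larger $k$ with $K_{k+1}$ in place of the triangle, so a single family of reductions covers all bounds at once rather than needing separate padding arguments. The graph produced is simple and of size polynomial in the instance, since each gadget uses $O(k)$ vertices and there are polynomially many of them.

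The main obstacle will be the backward direction together with the exact source accounting. I must show that \emph{every} $k$-bounded order meeting the threshold necessarily respects the intended gadget orientations, ruling out ``cheating'' orders that shave off a source by placing gadget vertices unexpectedly; this typically requires an exchange/case analysis proving that any deviation either breaks the $k$-bound or produces an extra source, so that the global source count decomposes cleanly as the fixed baseline plus the number of violated constraints. Since a clique contributes at most one source globally but its vertices also carry external edges, pinning down which configurations actually attain the baseline — and verifying that the forcing edges overload exactly the right vertices for every $k$ — is the delicate part; the forward direction (exhibiting an order of value $t$ from a satisfying assignment) and the polynomial bound on the construction should then follow routinely.
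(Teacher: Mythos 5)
Your high-level plan is sound and is in fact close in spirit to the paper's proof: the paper also reduces from a covering problem (vertex cover on $3$\nobreakdash-regular graphs), also builds one gadget per vertex of the source instance with inter-gadget edges encoding the edges to be covered, and also uses dense blocks (in the simplification step, complete bipartite graphs $K_{k+1,k+1}$ minus a matching) to force the order within a gadget. Your observation that the non-acyclic variant is polynomial via a convex $\varphi_v$ is correct, and the $K_{k+1}$ observation (the last clique vertex in any $k$\nobreakdash-bounded order has exactly $k$ earlier neighbors, all inside the clique, and exactly one clique vertex has no earlier clique-neighbor) is a legitimate building block.

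However, the proposal stops exactly where the proof begins. You never fix the source problem, never exhibit a gadget, and never prove either direction of the reduction; you explicitly defer ``the backward direction together with the exact source accounting'' and the verification that ``the forcing edges overload exactly the right vertices for every $k$,'' which is the entire mathematical content of the theorem. The paper's gadget is quite delicate: it uses parallel-edge multiplicities $\lfloor k/2\rfloor$, $\lceil k/2\rceil$, $k-1$, and $k$ tuned so that (i) in any $k$\nobreakdash-bounded order the last vertices of the two triangles $t_5t_6t_7$ and $t_8t_9t_{10}$ are forced, these must precede $t_4$, which must precede the interface vertices $t_1,t_2,t_3$; (ii) an interface vertex whose external neighbor precedes it forces some $t_{11},\dots,t_{14}$ to come late, disconnecting the prefix of the gadget and hence producing a \emph{second} source in that gadget; and (iii) every gadget unavoidably contributes one baseline source. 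Establishing that the minimum number of left-degree-$0$ vertices equals (minimum vertex cover) $+\,|V|$ rests entirely on this case analysis, and the subsequent replacement of parallel edges by $K_{k+1,k+1}$ minus a matching needs its own argument in both directions. Without a concrete gadget and these verifications, there is no proof --- only a (reasonable) research plan. As written, the proposal has a genuine gap.
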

\begin{proof}
  We prove the claim for graphs with parallel edges and then deduce the case of simple graphs.
  The proof is by reduction from the vertex cover problem in $3$\nobreakdash-regular simple graphs, which is known to be NP-hard~\cite{alimonti2000some}.
  In the vertex cover problem, the goal is to find a minimum-size subset $C \subseteq V$ such that every edge of $G$ has at least one endpoint in $C$.
  Given a $3$\nobreakdash-regular instance $G = (V, E)$ of the vertex cover problem, construct a graph $G' = (V', E')$ as follows.
  Let $H$ denote the gadget shown in Figure~\ref{fig:sortedLexMaxNPCGadget}, where the number of parallel edges depends on $k$.
  For each vertex $v \in V$, introduce a disjoint copy $H_v$ of $H$; for each edge $uv \in E$, add an edge to $G'$ between one of $t_1$, $t_2$, or $t_3$ in $H_u$ and one of $t_1$, $t_2$, or $t_3$ in $H_v$, ensuring that these vertices have exactly one incident non-gadget edge (illustrated with a wavy line) in every gadget.
  The size of the constructed graph is clearly polynomial in the size of the input to the vertex cover problem.

  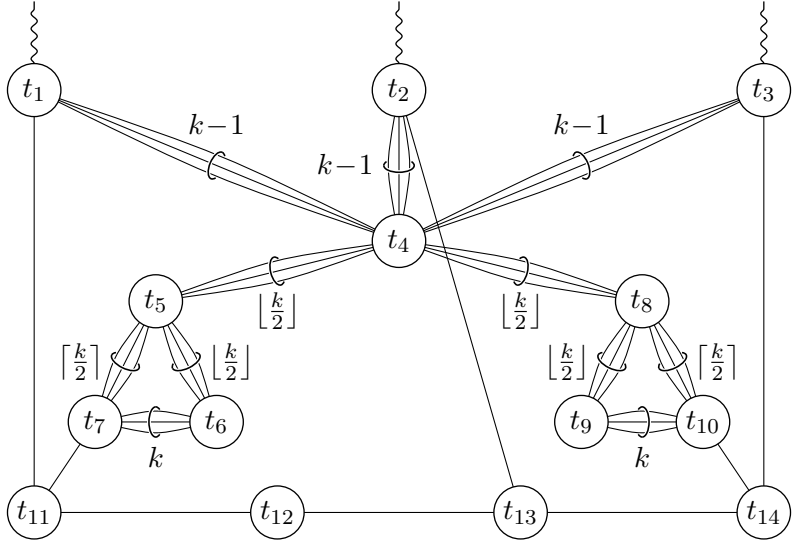
\begin{figure}[t]
    \centering
    \begin{tikzpicture}[scale=0.8]
      \centering
      \tikzset{VertexStyle/.append style = {minimum size = 20pt,inner sep=0pt}}
      \SetVertexMath
      \Vertex[x=-6, y=3.5, L=t_1]{t1}
      \Vertex[x=0, y=3.5, L=t_2]{t2}
      \Vertex[x=6, y=3.5, L=t_3]{t3}
      \Vertex[x=0, y=1, L=t_4]{t4}
      \Vertex[x=-4, y=0, L=t_5]{t5}
      \Vertex[x=4, y=0, L=t_8]{t8}
      \Vertex[x=-5, y=-2, L=t_7]{t7}
      \Vertex[x=-3, y=-2, L=t_6]{t6}
      \Vertex[x=3, y=-2, L=t_9]{t9}
      \Vertex[x=5, y=-2, L=t_{10}]{t10}
      \Vertex[x=-6, y=-3.5, L=t_{11}]{t11}
      \Vertex[x=-2, y=-3.5, L=t_{12}]{t12}
      \Vertex[x=2, y=-3.5, L=t_{13}]{t13}
      \Vertex[x=6, y=-3.5, L=t_{14}]{t14}

      \Vertex[x=-6, y=5, empty]{u}
      \Vertex[x=0, y=5, empty]{v}
      \Vertex[x=6, y=5, empty]{w}

      \draw[wavy] (t1) to (u);
      \draw[wavy] (t2) to (v);
      \draw[wavy] (t3) to (w);

      \draw (t1) to[paralleledge=3] (t4);
      \path[circleAroundEdges={.09}{.19}{0.5}] (t4)--(t1) node [above=.25cm, midway] {$k\!-\!1$};
      \draw (t2) to[paralleledge=3] (t4);
      \path[circleAroundEdges={.09}{.20}{0.5}] (t2)--(t4) node [left=.2cm, midway] {$k\!-\!1$};
      \draw (t3) to[paralleledge=3] (t4);
      \path[circleAroundEdges={.09}{.19}{0.5}] (t4)--(t3) node [above=.25cm, midway] {$k\!-\!1$};

      \draw (t4) to[paralleledge=3] (t5);
      \path[circleAroundEdges={.09}{.19}{0.5}] (t4)--(t5) node [below=.15cm, midway] {$\left\lfloor\frac{k}{2}\right\rfloor$};
      \draw (t4) to[paralleledge=3] (t8);
      \path[circleAroundEdges={.09}{.19}{0.5}] (t4)--(t8) node [below=.15cm, midway] {$\left\lfloor\frac{k}{2}\right\rfloor$};

      \draw (t5) to[paralleledge=3] (t7);
      \path[circleAroundEdges={.09}{.20}{0.5}] (t5)--(t7) node [left=.15cm, midway] {$\left\lceil\frac{k}{2}\right\rceil$};
      \draw (t5) to[paralleledge=3] (t6);
      \path[circleAroundEdges={.09}{.20}{0.5}] (t5)--(t6) node [right=.15cm, midway] {$\left\lfloor\frac{k}{2}\right\rfloor$};
      \draw (t7) to[paralleledge=3] (t6);
      \path[circleAroundEdges={.09}{.20}{0.5}] (t6)--(t7) node [below=.2cm, midway] {$k$};
      \draw (t7) -- (t11);

      \draw (t8) to[paralleledge=3] (t9);
      \path[circleAroundEdges={.09}{.20}{0.5}] (t8)--(t9) node [left=.15cm, midway] {$\left\lfloor\frac{k}{2}\right\rfloor$};
      \draw (t8) to[paralleledge=3] (t10);
      \path[circleAroundEdges={.09}{.20}{0.5}] (t8)--(t10) node [right=.15cm, midway] {$\left\lceil\frac{k}{2}\right\rceil$};
      \draw (t9) to[paralleledge=3] (t10);
      \path[circleAroundEdges={.09}{.20}{0.5}] (t9)--(t10) node [below=.2cm, midway] {$k$};
      \draw (t10) -- (t14);

      \draw (t11) -- (t12);
      \draw (t12) -- (t13);
      \draw (t13) -- (t14);

      \draw (t1) to[paralleledge=1] (t11);
      \draw (t2) to[paralleledge=1] (t13);
      \draw (t3) to[paralleledge=1] (t14);

    \end{tikzpicture}
    \caption{The gadgets corresponding to $v\in V$ for the proof of Theorem~\ref{thm:sortedLexMaxNPC}.}\label{fig:sortedLexMaxNPCGadget}
  \end{figure}

  We show that the minimum number of vertices with left-degree $0$ in a $k$\nobreakdash-bounded order of $G'$ equals the size of a minimum vertex cover in $G$ plus $|V|$.

  First, suppose $X \subseteq V$ is a minimum vertex cover in $G$.
  We construct a $k$\nobreakdash-bounded order for $G'$ with exactly $(|X| + |V|)$ vertices having left-degree $0$.
  For each $v \in X$, order $H_v$ as $t_{12}, t_{11}, t_{13}, t_{14}, t_1, t_2, \dots, t_{10}$, then, for each $u \in V \setminus X$, order $H_u$ as $t_1, t_2, \dots, t_{14}$.
  In this order, the left-degree of every vertex is at most $k$, and exactly two vertices in each $H_v$ for $v \in X$ (namely $t_7$ and $t_{10}$) and one vertex in each $H_u$ for $u \in V\setminus X$ (namely $t_{14}$) have left-degree $0$.

  Conversely, suppose we are given a $k$\nobreakdash-bounded order of $G'$.
  Define $X$ as the set of those vertices $v \in V$ for which $H_v$ contains at least two vertices with left-degree $0$ in the order.
  We prove that $X$ is a vertex cover by showing that if $u'$ precedes $v'$ in the $k$\nobreakdash-bounded order for some edge $uv \in E$, then $v$ must be in $X$, where $u'v'$ is the unique non-gadget (wavy) edge between $H_u$ and $H_v$.
  Here, $u' \in \{t_1, t_2, t_3\} \subset H_u$ and $v' \in \{t_1, t_2, t_3\} \subset H_v$.
  Consider the ordering of vertices in $H_v$.
  In any $k$\nobreakdash-bounded order, the last vertex of the $3$\nobreakdash-cycle $t_5 t_6 t_7$ must be $t_5$, and the last vertex of $t_8 t_9 t_{10}$ must be $t_8$.
  Both must precede $t_4$, which in turn must precede $t_1$, $t_2$, and $t_3$ --- one of which is $v'$.
  If $v'$ is $t_1$, then it must precede $t_{11}$, otherwise, all of its neighbors would precede $v'$, exceeding left-degree $k$.
  Similarly, if $v'$ is $t_2$, then it must precede $t_{13}$; if $v'$ is $t_3$, then it must precede $t_{14}$.
  This means that $t_1$, $t_2$, $t_3$, $t_4$ and at least one of $t_{11}$, $t_{12}$, $t_{13}$, and $t_{14}$ all succeed the $3$\nobreakdash-cycles $t_5t_6t_7$ and $t_8t_9t_{10}$.
  Thus, the first $9$ vertices of the gadget $H_v$ induces a subgraph of $H_v\setminus \{t_1, t_2, t_3, t_4\}$ that excludes at least one of $t_{11}$, $t_{12}$, $t_{13}$ and $t_{14}$, therefore, it is not connected.
  Clearly, the first vertex of each connected component has left-degree $0$, hence $H_v$ contains at least two vertices with left-degree $0$, thus $v\in X$.
  Finally, note that in any $k$\nobreakdash-bounded order, the first vertex of each gadget $H_u$ must have left-degree $0$, because only $t_1$, $t_2$, and $t_3$ have a non-gadget incident edge, and none of these can be first (otherwise, $t_4$ or the last vertex of the $3$\nobreakdash-cycle $t_5t_6t_7$ would exceed left-degree $k$).
  This proves that $X$ is a vertex cover, and its size satisfies the desired bound, which completes the proof of the theorem for graphs with parallel~edges.

  \medskip
  Now we modify the construction so that the resulting graph becomes simple.
  For any $p \in \left\{ \left\lfloor \frac{k}{2} \right\rfloor, \left\lceil \frac{k}{2} \right\rceil, k - 1, k \right\}$ and any pair of distinct vertices $t_i, t_j$ of $H_v$, if the gadget contains $p$ parallel edges between $t_i$ and $t_j$, then we replace these edges as follows.
  We introduce a complete bipartite graph $K_{k+1, k+1} = (L, R; E'')$.
  We connect $t_i$ to $p$ distinct vertices of $L$ and $t_j$ to $p$ distinct vertices of $R$.
  Then, we remove a matching of size $p$ among these $2p$ vertices from the bipartite graph.
  This replacement ensures that the new subgraph is connected, the degrees of $t_i$ and $t_j$ remain unchanged, and every new vertex has degree $(k+1)$.
  Thus, in any $k$\nobreakdash-bounded order, the last vertex among the new vertices and $t_i, t_j$ must be either $t_i$ or $t_j$.
  Furthermore, we can place the new vertices replacing the parallel edges between $t_i$ and $t_j$ in any order of the original vertices of the gadget without increasing the number of vertices with left-degree zero.
  By symmetry, assume $t_i$ precedes $t_j$ in the order.
  Insert the new vertices after $t_i$ and before $t_j$ in the following order.
  First, list the vertices of $L$ adjacent to $t_i$, then the vertices of $R$ not adjacent to $t_j$, followed by the remaining vertices of $L$, and finally the remaining vertices of $R$.
  This shows that from any order of the graph, we can derive a corresponding order of the modified simple graph without increasing the number of vertices with left-degree zero.

  For the reverse direction, consider an order for the modified graph and remove the new vertices.
  We claim that this removal does not increase the number of vertices with left-degree zero.
  Indeed, for any pair $t_i, t_j$, if the degree of $t_i$ drops to zero upon removal, then $t_i$ preceded $t_j$ and at least one of the new vertices replacing the parallel edges between $t_i$ and $t_j$ preceded $t_i$ in the order.
  The first such new vertex must have left-degree zero.
  Therefore, for each original vertex $t_i$ whose left-degree becomes zero, we account for at least one removed new vertex with left-degree zero.

  This establishes that any optimal order for the modified simple graph corresponds to an optimal order for the original graph, completing the proof.
\end{proof}

Since an inc-max $k$\nobreakdash-bounded order minimizes the number of vertices with left-degree $0$, the previous theorem directly yields the following corollary.
\begin{corollary}\label{cor:boundedIncMaxNPC}
  For every integer $k \geq 2$, finding an inc-max $k$\nobreakdash-bounded order is NP-hard even for simple graphs.
  \FBOX
\end{corollary}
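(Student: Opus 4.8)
The plan is to derive the corollary directly from Theorem~\ref{thm:sortedLexMaxNPC} by establishing the single structural fact quoted in the surrounding text: in any $k$-bounded order, an inc-max order is precisely one that minimizes the number of vertices of left-degree $0$. Once this equivalence of objectives is in hand, the NP-hardness transfers verbatim, since Theorem~\ref{thm:sortedLexMaxNPC} already proves that minimizing the count of left-degree-$0$ vertices is NP-hard on simple graphs for every $k \geq 2$.

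First I would fix the comparison framework. For a $k$-bounded order $\sigma$, every left-degree lies in $\{0,1,\dots,k\}$, so for the inc-max objective $\sigma$ is fully described by the counts $n_i^\sigma = \#\{v : \cev{d}_\sigma(v) = i\}$, and its non-decreasingly sorted left-degree sequence is the concatenation of $n_0^\sigma$ copies of $0$, then $n_1^\sigma$ copies of $1$, and so on up to $k$. To compare two $k$-bounded orders $\sigma$ and $\tau$ lexicographically on these sorted sequences, I would let $j$ be the smallest index with $n_j^\sigma \neq n_j^\tau$; a short case check on the first position where the two sorted sequences differ shows that $\sigma$ is strictly better for inc-max exactly when $n_j^\sigma < n_j^\tau$. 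In other words, inc-max minimizes $n_0$ first, then $n_1$, and so forth, in strict lexicographic priority.

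The key step is then immediate. Since the first vertex of any order has left-degree $0$, the value $0$ always occurs, hence $n_0^\sigma \geq 1$ for every order, and the very first lexicographic priority of inc-max is to make $n_0$ as small as possible. Therefore every inc-max $k$-bounded order attains the minimum possible number of vertices with left-degree $0$ among all $k$-bounded orders. Consequently, a polynomial-time algorithm for the inc-max $k$-bounded ordering problem would, by simply counting the left-degree-$0$ vertices of its output, compute this minimum in polynomial time, solving the problem shown NP-hard in Theorem~\ref{thm:sortedLexMaxNPC}; as the graphs produced there are simple and do admit $k$-bounded orders, finding an inc-max $k$-bounded order is NP-hard on simple graphs for every $k \geq 2$.

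I do not expect a genuine obstacle, as the corollary is essentially a restatement of the theorem under a different but order-equivalent objective. The only point requiring care is the lexicographic case analysis establishing that minimizing $n_0$ is the primary objective of inc-max: at the first index $j$ where the counts differ, one must verify that fewer copies of the smaller value $j$ push a strictly larger value into an earlier position of the sorted sequence. Treating the subcase $n_j^\sigma \geq 1$ (where both sorted sequences still begin with some copies of $j$, and the discrepancy surfaces at position $1 + \sum_{i \leq j} \min\{n_i^\sigma, n_i^\tau\}$) separately from the subcase $n_j^\sigma = 0$ (where $\sigma$ already carries a value exceeding $j$ there) keeps this verification routine.
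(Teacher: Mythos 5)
Your proposal is correct and matches the paper's argument: the paper derives the corollary in one line from Theorem~\ref{thm:sortedLexMaxNPC} via exactly the observation you prove, namely that an inc-max $k$\nobreakdash-bounded order minimizes the number of vertices with left-degree $0$. Your lexicographic case analysis simply fills in the routine justification that the paper leaves implicit.
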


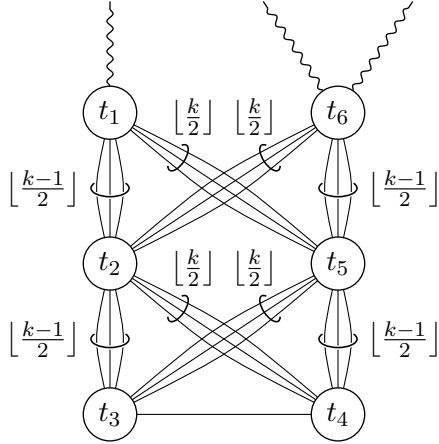
\begin{figure}[t]
  \centering
  \begin{tikzpicture}[scale=1]
    \centering
    \tikzset{VertexStyle/.append style = {minimum size = 20pt,inner sep=0pt}}
    \SetVertexMath
    \Vertex[x=-1.5, y=2, L=t_1]{t1}
    \Vertex[x=-1.5, y=0, L=t_2]{t2}
    \Vertex[x=-1.5, y=-2, L=t_3]{t3}
    \Vertex[x=1.5, y=-2, L=t_4]{t4}
    \Vertex[x=1.5, y=0, L=t_5]{t5}
    \Vertex[x=1.5, y=2, L=t_6]{t6}

    \Vertex[x=-1.5, y=3.5, empty]{t7}
    \Vertex[x=.5, y=3.5, empty]{t8}
    \Vertex[x=2.5, y=3.5, empty]{t9}

    \draw[wavy] (t1) to (t7);
    \draw[wavy] (t6) to (t8);
    \draw[wavy] (t6) to (t9);

    \draw (t3) to[paralleledge=1] (t4);

    \draw (t1) to[paralleledge=3] (t2);
    \path[circleAroundEdges={.1136}{.25}{.5}] (t1)--(t2) node [left=.25cm, midway] {$\left\lfloor\frac{k-1}{2}\right\rfloor$};
    \draw (t2) to[paralleledge=3] (t3);
    \path[circleAroundEdges={.1136}{.25}{.5}] (t2)--(t3) node [left=.25cm, midway] {$\left\lfloor\frac{k-1}{2}\right\rfloor$};
    \draw (t5) to[paralleledge=3] (t4);
    \path[circleAroundEdges={.1136}{.25}{.5}] (t5)--(t4) node [right=.25cm, midway] {$\left\lfloor\frac{k-1}{2}\right\rfloor$};
    \draw (t6) to[paralleledge=3] (t5);
    \path[circleAroundEdges={.1136}{.25}{.5}] (t6)--(t5) node [right=.25cm, midway] {$\left\lfloor\frac{k-1}{2}\right\rfloor$};

    \path[circleAroundEdges={.09}{.2}{1/4}] (t1)--(t5) node [above=.3cm, pos=1/3] {$\left\lfloor\frac{k}{2}\right\rfloor$};
    \draw (t1) to[paralleledge=3] (t5);
    \path[circleAroundEdges={.09}{.2}{1/4}] (t6)--(t2) node [above=.3cm, pos=1/3] {$\left\lfloor\frac{k}{2}\right\rfloor$};
    \draw (t6) to[paralleledge=3] (t2);
    \path[circleAroundEdges={.09}{.2}{1/4}] (t2)--(t4) node [above=.3cm, pos=1/3] {$\left\lfloor\frac{k}{2}\right\rfloor$};
    \draw (t2) to[paralleledge=3] (t4);
    \path[circleAroundEdges={.09}{.2}{1/4}] (t5)--(t3) node [above=.3cm, pos=1/3] {$\left\lfloor\frac{k}{2}\right\rfloor$};
    \draw (t5) to[paralleledge=3] (t3);
  \end{tikzpicture}
  \caption{The gadgets corresponding to $v\in V$ for the proof of Theorem~\ref{thm:sortedLexMinNPC}.}\label{fig:sortedLexMinNPCGadget}
\end{figure}

We now turn to the complexity of the dec-min $k$\nobreakdash-bounded ordering problem.
This problem was previously shown to be NP-hard for all odd $k \geq 5$ in~\cite{borradaile2017egalitarian}, which we strengthen by proving NP-hardness for all integers $k \geq 2$.
\begin{theorem}\label{thm:sortedLexMinNPC}
  For every integer $k \geq 2$, minimizing the number of vertices with left-degree exactly $k$ in a $k$\nobreakdash-bounded order is NP-hard even for simple graphs.
\end{theorem}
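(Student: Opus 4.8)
The plan is to reduce from the vertex cover problem on $3$-regular graphs, which is NP-hard~\cite{alimonti2000some}, following the two-stage template of Theorem~\ref{thm:sortedLexMaxNPC}: first establish hardness allowing parallel edges, then remove them. The case $k=2$ is degenerate, and I would dispose of it separately: by Lemma~\ref{lem:2BoundedDecMinIncMax} the $2$-bounded dec-min and inc-max problems coincide, so minimizing the number of left-degree-$2$ vertices is equivalent to minimizing the number of left-degree-$0$ vertices, which is NP-hard by Theorem~\ref{thm:sortedLexMaxNPC}. The remainder of the argument concerns $k\ge 3$. I would attach to each vertex $v$ of the $3$-regular instance a copy $H_v$ of the gadget in Figure~\ref{fig:sortedLexMinNPCGadget}, whose terminals $t_1$ (one external slot) and $t_6$ (two external slots) account for the three edges at $v$, and join the corresponding free terminals of $H_u,H_v$ by a wavy edge for each edge $uv$ of $G$. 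The construction is $k$-degenerate, so $k$-bounded orders exist, and writing $\tau(G)$ for the minimum vertex-cover size, the goal is the clean correspondence
\[
  \min_{\sigma}\#\{u : \cev d_\sigma(u)=k\} \;=\; \tau(G),
\]
the minimum being over $k$-bounded orders of the constructed graph.

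Everything rests on a single per-gadget lemma: (i) $H_v$ admits an internal order in which every vertex has left-degree at most $k-1$, so a gadget none of whose terminals \emph{receive} an external edge (i.e.\ whose external neighbours all come later) contributes no vertex of left-degree $k$; (ii) if at least one external edge \emph{is} received, then $H_v$ must contain at least one vertex of left-degree exactly $k$; and (iii) a gadget can absorb \emph{all} of its (up to three) incident external edges while still containing only a single vertex of left-degree $k$. Granting this, the reduction closes quickly. For the upper bound, from a cover $C$ I orient every edge towards an endpoint in $C$ (possible since $C$ is a cover), place all gadgets with $v\notin C$ first in the baseline order of~(i) and all gadgets with $v\in C$ afterwards in the absorbing order of~(iii); this produces exactly $|C|=\tau(G)$ vertices of left-degree $k$. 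For the lower bound, any $k$-bounded order induces an orientation of $G$ in which the set $Z$ of gadgets receiving no external edge is \emph{independent}, since each edge is received by at least one of its two endpoint-gadgets; by~(ii) and vertex-disjointness of the gadgets, the number of left-degree-$k$ vertices is then at least $|V|-|Z|\ge |V|-\alpha(G)=\tau(G)$, where $\alpha(G)$ is the independence number and the last equality is the Gallai identity. The two bounds match, giving the displayed equality and hence NP-hardness.

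The hard part is part~(ii) of the lemma. Structurally $H_v$ is a weighted $K_{2,4}$ with hubs $t_2,t_5$ (internal degree $2k-2$) and leaves $t_1,t_3,t_4,t_6$, together with the single edge $t_3t_4$; the multiplicities $\lfloor (k-1)/2\rfloor$ and $\lfloor k/2\rfloor$ are tuned so that each leaf sits at internal left-degree exactly $k-1$ once both hubs precede it, while the later of $t_3,t_4$ additionally collects the edge $t_3t_4$ and reaches $k$. The order $t_3,t_4,t_5,t_2,t_6,t_1$ verifies~(i) (all left-degrees $\le k-1$), and $t_6,t_1,t_2,t_5,t_3,t_4$ verifies~(iii) (only $t_4$ reaches $k$, while for $k\ge 3$ the terminals stay below $k$ even after absorbing their two, respectively one, external edges). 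For~(ii) I would argue by contradiction: assuming no vertex reaches $k$ forces every gadget vertex to internal left-degree $\le k-1$ and each receiving terminal strictly below $k-1$ internally, and a short case analysis on the relative placement of the two hubs (which cannot be placed last, as their degree $2k-2$ exceeds $k$) and on which of $t_3,t_4$ is later then shows that the slack needed to absorb a received edge is incompatible with keeping both hubs and the later of $t_3,t_4$ below $k$. This is precisely the step that collapses at $k=2$, where the vertical bundles vanish and $H_v$ degenerates to a path, which is why that case is routed through Lemma~\ref{lem:2BoundedDecMinIncMax} instead.

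Finally, I would make the graph simple exactly as in Theorem~\ref{thm:sortedLexMaxNPC}, replacing each bundle of $p$ parallel edges between $t_i$ and $t_j$ by a complete bipartite graph $K_{k+1,k+1}$ with a size-$p$ matching deleted and $t_i,t_j$ attached to the two sides; as there, this preserves all terminal degrees, gives every auxiliary vertex degree $k+1$ (so it cannot be the last of $t_i,t_j$ and the auxiliary vertices in a $k$-bounded order), and leaves the number of left-degree-$k$ vertices unchanged. Combining the two regimes completes the proof for all $k\ge 2$. I expect the gadget case analysis in~(ii) — ruling out \emph{every} way of distributing the six vertices that would hide a received edge below the threshold — to be the main obstacle, as the remaining pieces are explicit orders and the standard Gallai/independent-set bookkeeping.
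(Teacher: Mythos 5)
Your core reduction is essentially the paper's: the same dispatch of $k=2$ through Lemma~\ref{lem:2BoundedDecMinIncMax}, the same vertex-cover reduction with the gadget of Figure~\ref{fig:sortedLexMinNPCGadget}, and the same key structural fact (your lemma~(ii)) that a gadget receiving an external edge must contain a left-degree-$k$ vertex --- your sketch of why (the hubs of internal degree $2k-2$ and the vertices $t_3,t_4$ of internal degree $k$ cannot be last, forcing $t_1,t_6$ to be the last two at internal left-degree $k-1$ with no slack for a received edge) is exactly the paper's argument. Your lower-bound bookkeeping via an independent set and the Gallai identity is an equivalent reformulation of the paper's direct verification that the gadgets containing a left-degree-$k$ vertex form a cover. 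The explicit baseline and absorbing orders you give are correct.

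There is, however, a genuine error in the final simplification step. You propose to replace each bundle of $p$ parallel edges ``exactly as in Theorem~\ref{thm:sortedLexMaxNPC}'', i.e.\ by $K_{k+1,k+1}$ minus a $p$-matching, and you note that every auxiliary vertex then has degree $k+1$. But for \emph{this} objective that is fatal: consider the last auxiliary vertex of a bundle in any $k$-bounded order. All $2k+1$ other auxiliary vertices precede it, and among its $k+1$ neighbours at most one (a terminal $t_i$ or $t_j$) can come later, so its left-degree is at least $k$, hence exactly $k$ by $k$-boundedness. Thus every bundle in every gadget forcibly contributes a left-degree-$k$ vertex, the replacement does \emph{not} ``leave the number of left-degree-$k$ vertices unchanged'', and the identity $\min_\sigma \#\{u:\cev d_\sigma(u)=k\}=\tau(G)$ is destroyed (the count is inflated by a quantity that is not obviously a fixed constant per gadget). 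The property you cite as a feature --- that no auxiliary vertex can be last among $\{t_i,t_j\}\cup\text{aux}$ --- was exactly what Theorem~\ref{thm:sortedLexMaxNPC} needed for counting left-degree-$0$ vertices, but here it backfires. The paper instead replaces each bundle by a clique $K_{k+1}$ with a $p$-matching deleted among the $2p$ attachment vertices, so that every auxiliary vertex has degree exactly $k$; then inserting the auxiliaries between $t_i$ and $t_j$ (those adjacent to $t_i$ first, then those adjacent to neither, then those adjacent to $t_j$) keeps all of them strictly below left-degree $k$, and the count is genuinely preserved in both directions. With that substitution your argument goes through.
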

\begin{proof}
  The case $k = 2$ follows from Lemma~\ref{lem:2BoundedDecMinIncMax} and Corollary~\ref{cor:boundedIncMaxNPC}, thus we assume that $k \geq 3$.
  We first prove the theorem for graphs with parallel edges, and then show how to make the graph simple.
  We give a reduction from the vertex cover problem on $3$\nobreakdash-regular simple graphs~\cite{alimonti2000some}, similar to the proof of Theorem~\ref{thm:sortedLexMaxNPC}.

  Given a $3$\nobreakdash-regular instance $G = (V, E)$ of vertex cover, we construct a graph $G' = (V', E')$ as follows.
  For each $v \in V$, introduce a disjoint copy $H_v$ of the gadget shown in Figure~\ref{fig:sortedLexMinNPCGadget}.
  For each edge $uv \in E$, add a non-gadget (wavy) edge between $t_1 \in H_u$ or $t_6 \in H_u$ and $t_1 \in H_v$ or $t_6 \in H_v$, ensuring that within each gadget exactly one and two such edges are incident to $t_1$ and $t_6$, respectively.
  These non-gadget edges correspond to the edges of $G$.
  The size of the constructed graph is clearly polynomial in the size of the input to the vertex cover problem.

  We show that the size of a minimum vertex cover $X \subseteq V$ in $G$ equals the minimum number of vertices with left-degree $k$ in a $k$\nobreakdash-bounded order of $G'$.

  First, suppose $X$ is a minimum vertex cover.
  We construct a $k$\nobreakdash-bounded order of $G'$ where at most $|X|$ vertices have left-degree $k$.
  For each $v \in X$, remove $t_3$ from $H_v$ in $G'$, and recursively remove vertices of degree smaller than $k$.
  When a vertex is removed, place it in the next rightmost position of the order.
  We argue that this process removes all vertices, and thus yields a valid order of $V'$.
  Indeed, if $v \in X$, then removing $t_3$ eliminates $H_v$.
  If $u \in V\setminus X$, then for each edge $uv \in E$, some $v \in X$ ensures that $H_v$ is removed, removing all non-gadget edges incident to $H_u$ and triggering the removal of $H_u$.
  Thus all vertices are removed, forming a $k$\nobreakdash-bounded order.
  The only vertices with left-degree $k$ arise from removing $t_3$ in some $H_v$ with $v \in X$, so at most $|X|$ such vertices appear.

  Conversely, suppose we have a $k$\nobreakdash-bounded order of $G'$.
  We construct a vertex cover whose size matches the number of vertices with left-degree $k$.
  Define $X$ as the set of vertices $v \in V$ for which $H_v$ contains a vertex of left-degree $k$ in the given order.
  We show that $X$ is a vertex cover.
  Suppose for contradiction that some edge $uv \in E$ is not covered by $X$.
  Consider the non-gadget edge $u'v'$ joining $H_u$ and $H_v$.
  Since $u \in V\setminus X$, no vertex in $H_u$ has left-degree $k$.
  By the construction of the gadget, the last two vertices among $t_1, \dots, t_6$ in $H_u$ must be $t_1$ and $t_6$, or else a vertex of $H_u$ would have left-degree at least $k$.
  For $t_1$ and $t_6$ to avoid left-degree $k$, all their non-gadget neighbors (including $v'$) must appear later in the order, so $u'$ precedes $v'$.
  By symmetry, applying the same reasoning to $v \in V\setminus X$ forces $v'$ to precede $u'$, which is a contradiction.
  Therefore, $X$ is a vertex cover, and its size equals the number of vertices with left-degree $k$.
  This completes the proof for graphs with parallel edges.

  \medskip
  Now we modify the construction to obtain a simple graph.
  For any $p \in \left\{ \left\lfloor \frac{k-1}{2} \right\rfloor, \left\lfloor \frac{k}{2} \right\rfloor \right\}$ and any pair of distinct vertices $t_i, t_j \in H_v$, if the gadget in Figure~\ref{fig:sortedLexMinNPCGadget} contains $p$ parallel $t_i t_j$ edges, then we replace these edges with the following graph.
  Take a clique $K_{k+1}$, choose $2p$ distinct vertices, connect $p$ of them to $t_i$, the remaining $p$ to $t_j$, and remove a matching of size $p$ among these $2p$ vertices within the clique.

  The resulting replacement is connected, preserves the degrees of the original vertices of the gadgets, and ensures that every new vertex has degree $k$.

  We show that an optimal order for this modified graph contains the same number of vertices with left-degree $k$ as an optimal order for the original graph containing parallel edges.
  Given an optimal order for the modified graph, removing the new vertices does not increase the number of vertices with left-degree $k$.
  Conversely, starting from an optimal order for the original graph, we can insert the new vertices between the corresponding $t_i$ and $t_j$ as follows (assuming $t_i$ precedes $t_j$; otherwise, swap their roles).
  First insert the vertices adjacent to $t_i$, then those not adjacent to $t_j$, and finally those adjacent to $t_j$.

  In this ordering, no new vertex attains left-degree $k$, and the left-degrees of the original vertices remain unchanged.
  Thus, the problem remains NP-hard even for simple graphs, completing the proof.
\end{proof}

Since dec-min $k$\nobreakdash-bounded orders minimize the number of vertices with left-degree exactly $k$, Theorem~\ref{thm:sortedLexMinNPC} directly implies that finding a dec-min $k$\nobreakdash-bounded order is NP-hard for all $k \geq 2$.
\begin{corollary}\label{cor:decMinNPC}
  For every integer $k \geq 2$, finding a dec-min $k$\nobreakdash-bounded order is NP-hard even for simple graphs.
  \FBOX
\end{corollary}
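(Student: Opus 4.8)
The plan is to obtain the corollary as an immediate consequence of Theorem~\ref{thm:sortedLexMinNPC} by identifying the number of maximum-left-degree vertices with the leading term of the dec-min objective. First I would observe that in any $k$-bounded order every left-degree lies in $\{0,1,\dots,k\}$, so the largest value that can appear in the non-increasingly sorted left-degree sequence is $k$. Consequently, when two $k$-bounded orders are compared under the dec-min (lexicographic) criterion, the first coordinate that can differ is the number of entries equal to $k$: the order with fewer vertices of left-degree exactly $k$ is strictly better, and only when these counts agree do the lower values $(k-1), (k-2), \dots$ come into play. Hence any dec-min $k$-bounded order must, as its highest-priority objective, minimize the number of vertices with left-degree exactly $k$.

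From this I would set up a Turing reduction from the problem shown NP-hard in Theorem~\ref{thm:sortedLexMinNPC} --- minimizing the number of vertices with left-degree exactly $k$ in a $k$-bounded order of a simple graph --- to the dec-min $k$-bounded ordering problem. Given an oracle that returns a dec-min $k$-bounded order $\sigma$ of a simple graph $G$, I would simply count the vertices $v$ with $\cev{d}_{\sigma}(v)=k$. By the observation above, this count is independent of which dec-min order the oracle returns and equals the minimum number of left-degree-$k$ vertices over all $k$-bounded orders of $G$. Since computing this minimum is NP-hard even for simple graphs, finding a dec-min $k$-bounded order is NP-hard as well.

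The only point requiring care is to confirm that minimizing the count of $k$'s is genuinely the top-priority term of the dec-min objective rather than being entangled with the lower-order counts; this is exactly the content of the first paragraph, where one must be precise about the lexicographic comparison of the sorted sequences. Beyond that the argument is routine: the reduction is a single oracle call followed by a count, and the restriction to simple graphs is inherited directly from Theorem~\ref{thm:sortedLexMinNPC}. I therefore expect no substantial obstacle, which is consistent with the statement being phrased as a corollary.
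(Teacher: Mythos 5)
Your proposal is correct and matches the paper's argument: the paper derives the corollary from Theorem~\ref{thm:sortedLexMinNPC} via exactly the observation that a dec-min $k$\nobreakdash-bounded order must, as its top lexicographic priority, minimize the number of vertices with left-degree exactly $k$. You simply spell out in more detail the lexicographic comparison and the one-oracle-call reduction that the paper leaves implicit.
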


Next, we show that finding an inc-max order without any bound on the left-degrees is NP-hard.
Unlike Theorem~\ref{thm:sortedLexMinNPC}, which proves NP-hardness for dec-min orders, Theorem~\ref{thm:sortedLexMaxNPC} does not directly determine the complexity of the inc-max problem in the unbounded case.
The reason is that a graph may admit a $k$\nobreakdash-bounded order, yet the optimal inc-max order may not be $k$\nobreakdash-bounded.
An example illustrating this distinction is provided in Figure~\ref{fig:decMinIncMaxDifference}.
Now we prove the complexity of inc-max ordering.

\begin{theorem}\label{thm:incMaxNPC}
  Finding an inc-max order is NP-hard even for simple graphs.
\end{theorem}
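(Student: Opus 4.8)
The plan is to give a reduction from the vertex cover problem on $3$-regular simple graphs, following the gadget-based strategy of Theorem~\ref{thm:sortedLexMaxNPC}, but redesigned so that it survives the absence of a degree bound. The starting observation is Claim~\ref{cl:decMinIncMaxAreSpecial}: an inc-max order is exactly a minimizer of $\sum_{v\in V}|V|^{-\cev d(v)}$. In this formulation a vertex of left-degree $0$ costs $1$, a vertex of left-degree $1$ costs $|V|^{-1}$, and so on, so the objective hierarchically minimizes first the number of vertices with left-degree $0$, then the number with left-degree $1$, and so on. The difficulty flagged before the statement is visible here: vertices of large left-degree are essentially free, so nothing in the objective itself prevents an ordering from pushing some left-degrees arbitrarily high in order to shrink the count of small-left-degree vertices. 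Moreover, for a connected graph a connected ordering already achieves exactly one vertex of left-degree $0$, so the primary sub-objective is trivial; the vertex-cover signal must therefore be arranged to surface at a higher level of the hierarchy, for instance in the count of vertices of left-degree $1$.

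Accordingly, the first step is to design, for each $v\in V$, a simple gadget $H_v$ that is internally rigid: it should contain a dense core (e.g.\ cliques $K_{k+1}$ and complete bipartite pieces, exactly the devices used to simplify the multigraphs in Theorems~\ref{thm:sortedLexMaxNPC} and~\ref{thm:sortedLexMinNPC}) whose vertices are forced to attain large left-degree in every ordering and are therefore lexicographically irrelevant, together with a small number of ``interface'' vertices whose left-degrees decide the objective. The gadgets are joined, as in Theorem~\ref{thm:sortedLexMaxNPC}, by one wavy edge per edge of $G$ between designated interface vertices. The intended invariant is that the number of interface vertices of the critical small left-degree contributed by $H_v$ takes one of two values according to whether $v$ is placed ``as a cover vertex'' or not, so that the minimum objective corresponds to an order in which the number of such vertices equals $|V|$ plus the size of a minimum vertex cover of $G$, mirroring Theorem~\ref{thm:sortedLexMaxNPC}.

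The core of the argument, and the main obstacle, is a normalization lemma establishing that the freedom to use unbounded left-degrees cannot beat this bound. Concretely, I would prove that any ordering can be transformed, without lexicographically worsening its ascending left-degree sequence, into a canonical order that places the dense core of each gadget first (so that its vertices carry all the ``excess'' degree) and orders the interface vertices in one of the two intended canonical patterns; this reduces the unbounded problem on each gadget to a finite case analysis. The dense cores are what make this possible: a clique $K_{k+1}$ forces its final vertex to have left-degree at least $k$ and hence pins down where the high degrees must concentrate, so shoving an interface vertex to high left-degree cannot remove a small-left-degree vertex elsewhere without violating the rigid structure. Once this exchange argument is in place, the equivalence between an inc-max optimum and a minimum vertex cover follows as in the bounded case, and the simple-graph property is inherited from the clique and complete-bipartite replacements already used for parallel edges. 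The delicate point to get right is quantitative: the gadget parameters (clique sizes, the numbers of parallel-edge replacements, and the threshold at which a left-degree is counted as ``small'') must be tuned so that the two canonical patterns differ at exactly one level of the lexicographic hierarchy and by exactly one vertex, so that minimizing the sorted sequence genuinely solves vertex cover.
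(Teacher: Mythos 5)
Your proposal is a plan rather than a proof: the two things that would constitute the actual argument --- the concrete gadgets $H_v$ and the ``normalization lemma'' showing that unbounded left-degrees cannot beat the intended bound --- are both left unconstructed, and they are precisely where the difficulty lives. More importantly, the plan aims the vertex-cover signal at the wrong place. For a simple graph, the $i^{\text{th}}$ vertex of any order has left-degree at most $i-1$, so the sorted sequence satisfies $\ell_i \leq i-1$ for every $i$; lexicographic maximization therefore amounts to maximizing the largest $\ell$ for which there is \emph{exactly one} vertex of left-degree $i$ for each $i \in \{0,\dots,\ell-1\}$, and achieving this forces the first $\ell$ vertices to form a clique. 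The objective is thus not a two-level hierarchy (``count of left-degree-$0$ vertices, then count of left-degree-$1$ vertices'') that a local gadget can control; it keeps comparing at \emph{every} level $2,3,4,\dots$, so a construction that only tunes the low end of the spectrum cannot certify optimality, and your claim that the two canonical gadget patterns ``differ at exactly one level of the hierarchy'' would have to be verified against the entire sorted sequence of the whole graph, which the exchange argument you sketch does not address.

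The paper takes a different route that is matched to this clique characterization: it reduces from MAX-$3$-SAT-$4$, building a vertex per literal occurrence and joining occurrences from distinct clauses that are not negations of each other, so that the achievable prefix length $\ell$ satisfies $q-4 \leq \ell \leq q$ where $q$ is the maximum number of satisfiable clauses; NP-hardness then follows from the APX-hardness (threshold $\tfrac{1900}{1899}$) of MAX-$3$-SAT-$4$ rather than from an exact correspondence. If you want to salvage a vertex-cover-style reduction, you would first need to prove the clique characterization of inc-max orders for simple graphs and then explain how your gadgets control the full sorted left-degree vector, not just the multiplicities at levels $0$ and $1$; as written, the proposal has a genuine gap.
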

\begin{proof}
  We prove NP-hardness by reduction from the MAX\nobreakdash-$3$\nobreakdash-SAT\nobreakdash-$4$ problem.
  Here the input is a Boolean formula in conjunctive normal form where each variable appears in exactly four clauses and each clause contains exactly three distinct literals.
  The task is to find an assignment that maximizes the number of satisfied clauses.
  This problem is APX-hard with an approximation threshold of $\frac{1900}{1899}$, that is, no polynomial-time algorithm can achieve a better approximation ratio unless $\mathrm{P} = \mathrm{NP}$~\cite{berman2003approximation}.

  An inc-max order of a simple graph maximizes the largest integer $\ell$ such that there exists exactly one vertex with left-degree $i$ for each $i \in \{0, \dots, \ell - 1\}$.
  We show that if such an order could be computed in polynomial time, then it would lead to an approximation ratio better than the threshold for MAX\nobreakdash-$3$\nobreakdash-SAT\nobreakdash-$4$, thereby implying NP-hardness.

  Given an instance of MAX\nobreakdash-$3$\nobreakdash-SAT\nobreakdash-$4$ with $m$ clauses, we construct a graph $G = (V, E)$ as follows.
  For each clause $j \in \{1, \dots, m\}$, and for each literal position $r \in \{1, 2, 3\}$ within the clause, introduce a vertex $c_r^j$ corresponding to the $r^{\text{th}}$ literal of clause $j$.
  We add an edge between $c_{r_1}^{j_1}$ and $c_{r_2}^{j_2}$ whenever $j_1, j_2$ are distinct and the literals $r_1,r_2$ are not negations of each other.
  The size of the constructed graph is clearly polynomial in the size of the input to the MAX\nobreakdash-$3$\nobreakdash-SAT\nobreakdash-$4$ problem.

  Let $q$ denote the maximum number of satisfiable clauses in the formula, and let $\ell$ denote the largest integer such that there exists an order of $V$ where exactly one vertex has left-degree $i$ for each $i \in \{0, \dots, \ell - 1\}$.
  We prove that $q - 4 \leq \ell \leq q$.

  For the upper bound, suppose that we are given an order of the vertices such that there is a unique vertex $v_i$ of left-degree $i$ for each $i\in\{0, \dots, \ell - 1\}$.
  All remaining vertices in $V \setminus \{v_0, \dots, v_{\ell-1}\}$ must have left-degree at least $\ell$, and since $G$ is simple, these vertices must each have at least $\ell$ preceding neighbors.
  Thus, $v_0, \dots, v_{\ell-1}$ are the first $\ell$ vertices in the order and they form a clique, implying that they correspond to literals from distinct clauses that are not negations of each other.
  Assigning these literals to be true satisfies at least $\ell$ clauses, yielding~$\ell \leq q$.

  For the lower bound $q-4 \leq \ell$, consider an optimal truth assignment satisfying exactly $q$ clauses.
  Choose one true literal per satisfied clause, yielding vertices $v_0, \dots, v_{q-1}$.
  These vertices form a clique, as they come from different clauses, and are not negations of one another.
  An arbitrary order beginning with the vertices $v_0,\dots,v_{q-1}$ yields left-degrees $0,\dots, (q-1)$, respectively.
  Any remaining vertex $v \in V \setminus \{v_0, \dots, v_{q-1}\}$ has at least $(q - 4)$ neighbors among $v_0, \dots, v_{q-1}$, since at most one literal from the same clause and three negations can be non-neighbors and hence excluded.
  Consequently, an order exists where exactly one vertex has each left-degree up to $(q - 5)$, ensuring $q - 4 \leq \ell$.

  Consequently, determining $\ell$ gives a $\frac{q}{q - 4}$\nobreakdash-approximation of $q$.
  If $q \leq 7600$, then we can solve MAX\nobreakdash-$3$\nobreakdash-SAT\nobreakdash-$4$ exactly in polynomial time.
  Otherwise, $\frac{q}{q - 4} < \frac{1900}{1899}$, which is below the approximability threshold, thus, finding an inc-max order is NP-hard.
\end{proof}

We have established that both the dec-min and inc-max acyclic orientation problems are NP-hard.
We now introduce two natural counterparts: the \emph{increasingly minimal (inc-min)} and \emph{decreasingly maximal (dec-max)} orientation problems.
The inc-min (acyclic) orientation problem asks for an (acyclic) orientation in which the indegree vector sorted in non-decreasing order is lexicographically minimal.
Similarly, in the dec-max (acyclic) orientation problem, the indegree vector sorted in non-increasing order is to be lexicographically maximized.
We prove that both the inc-min and dec-max problems are NP-hard, regardless of whether the orientation is required to be acyclic.

\begin{theorem}\label{thm:incMinNPC}
  The inc-min orientation and inc-min acyclic orientation problems are NP-hard even for $3$\nobreakdash-regular simple graphs.
\end{theorem}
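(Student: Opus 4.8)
The plan is to reduce from the maximum independent set problem on cubic graphs, equivalently the minimum vertex cover problem on $3$-regular simple graphs, which is NP-hard~\cite{alimonti2000some}. The crucial feature of this reduction is that \emph{no gadget is needed}: given a cubic graph $G$, I would simply ask for an inc-min orientation of $G$ itself, so the produced instance is automatically $3$-regular and simple, and the same input serves both the acyclic and the unconstrained variant.

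The key structural fact I would establish first is that the indegree-$0$ vertices (the \emph{sources}) of any orientation form an independent set: two adjacent sources are impossible, since the edge between them would give one of them positive indegree. Conversely, every independent set $I$ can be realized as the exact set of sources of both a general and an acyclic orientation. Orienting every edge incident to $I$ away from $I$ is consistent precisely because $I$ is independent; the remaining edges lie in $G[V\setminus I]$ and may be oriented arbitrarily in the unconstrained case, or acyclically in the acyclic case. In the latter, the whole orientation stays acyclic because every arc either stays inside $V\setminus I$ or leaves $I$, so no directed cycle can meet $I$. Hence the maximum possible number of sources equals the independence number $\alpha(G)$ for both problems.

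Next I would show that the inc-min objective maximizes exactly this count. Comparing two orientations by their indegree vectors sorted in non-decreasing order, the orientation with more sources is lexicographically smaller: the two sorted sequences agree (both equal to $0$) up to the smaller number of sources, and at the following coordinate the orientation with fewer sources already has a strictly positive entry while the other still has $0$. Consequently every inc-min orientation attains the maximum number of sources, namely $\alpha(G)$, and its indegree-$0$ vertices form a maximum independent set, whose complement is a minimum vertex cover. A polynomial-time algorithm for the inc-min (acyclic) orientation problem would therefore let us read off a maximum independent set of an arbitrary cubic graph by collecting its sources, solving minimum vertex cover on cubic graphs in polynomial time; this yields NP-hardness for both variants at once.

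I do not expect a genuine obstacle in this argument, which is short and gadget-free. The only points requiring care are the two-sided realizability of an independent set as a source set --- in particular verifying that the acyclic completion of $G[V\setminus I]$ introduces no directed cycle through $I$ --- and the elementary lexicographic comparison identifying ``maximize the number of sources'' with the inc-min objective. Both hold verbatim in the acyclic and the unconstrained settings, which is exactly why a single reduction covers Theorem~\ref{thm:incMinNPC} in full.
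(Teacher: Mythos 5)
Your proposal is correct and follows essentially the same route as the paper: a gadget-free reduction from maximum independent set on $3$\nobreakdash-regular simple graphs, using that zero-indegree vertices form an independent set, that any independent set can be made into sources of an acyclic orientation, and that inc-min maximizes the number of sources. The only nitpick is your claim that every independent set is realized as the \emph{exact} set of sources (false for non-maximal $I$, since an acyclic orientation of $G[V\setminus I]$ has a source that may have no neighbor in $I$), but the argument never needs this --- only the two inequalities bounding the maximum source count by $\alpha(G)$ from both sides.
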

\begin{proof}
  We prove the NP-hardness of both problems simultaneously by reduction from the maximum independent set problem, which is known to be NP-hard for $3$\nobreakdash-regular simple graphs~\cite{alimonti2000some}.
  In the maximum independent set problem, the goal is to find a largest possible subset $I \subseteq V$ such that no two vertices in $I$ are adjacent.

  In an optimal inc-min (acyclic) orientation, the number of vertices with indegree zero is maximized.
  Let $n_0$ and $n_0^{\text{DAG}}$ denote the maximum number of zero-indegree vertices achievable in any orientation and in any acyclic orientation, respectively.
  Let $G = (V, E)$ be a given $3$\nobreakdash-regular simple graph, and let $k$ denote the size of a maximum independent set in $G$.
  We prove that $n_0 = n_0^{\text{DAG}} = k$.

  Clearly, $n_0 \geq n_0^{\text{DAG}}$.
  To see that $n_0^{\text{DAG}} \geq k$, take any independent set of size $k$ and place its vertices first in an ordering.
  Orient all edges from left to right, and thus obtain an acyclic orientation in which all members of the independent set have indegree zero.
  Next, observe that $n_0 \leq k$ since any set of zero-indegree vertices must be independent.

  Combining $n_0 \geq n_0^{\text{DAG}} \geq k$ and $n_0 \leq k$ gives $n_0 = n_0^{\text{DAG}} = k$.
  Therefore, solving the inc-min (acyclic) orientation problem would allow us to compute a maximum independent set.
\end{proof}

In the case of $3$\nobreakdash-regular simple graphs, an orientation is an inc-min acyclic orientation if and only if its reversal is a dec-max acyclic orientation.
Reversing an orientation preserves acyclicity and transforms the indegree $\varrho(v)$ into $(3 - \varrho(v))$, thereby leading directly to the following corollary of Theorem~\ref{thm:incMinNPC}.
\begin{corollary}\label{cor:decMaxNPC}
  The dec-max orientation and dec-max acyclic orientation problems are NP-hard even for $3$\nobreakdash-regular simple graphs.
  \FBOX
\end{corollary}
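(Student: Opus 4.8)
The plan is to exploit the arc-reversal involution on orientations, as already foreshadowed in the text, and to transfer the hardness of Theorem~\ref{thm:incMinNPC} to the dec-max problems. Let $G=(V,E)$ be a $3$-regular simple graph, so that $\varrho_D(v)+\delta_D(v)=3$ for every orientation $D$ and every vertex $v$. For an orientation $D$, let $\bar D$ denote the orientation obtained by reversing every arc. Then $\varrho_{\bar D}(v)=\delta_D(v)=3-\varrho_D(v)$; moreover the map $D\mapsto\bar D$ is an involution, hence a bijection on the set of orientations, and it preserves acyclicity, since a directed cycle in $\bar D$ is exactly a directed cycle of $D$ traversed backwards. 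Thus it restricts to a bijection on acyclic orientations, which lets me handle both the acyclic and the unconstrained variants simultaneously.

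First I would record the effect of reversal on the sorted indegree sequences. If $a_1\le\cdots\le a_n$ is the non-decreasing indegree sequence of $D$, then the multiset of indegrees of $\bar D$ is $\{3-a_i\}$, whose non-increasing sorted sequence is precisely $(3-a_1,\dots,3-a_n)$. Next I would verify that the coordinate-wise complement $x\mapsto 3-x$ reverses the lexicographic comparison: for two orientations $D,D'$ with non-decreasing indegree sequences $a$ and $a'$, the first coordinate at which $a$ and $a'$ differ coincides with the first coordinate at which their complements differ, and at that coordinate $a_i<a'_i$ holds if and only if $3-a_i>3-a'_i$. Consequently $a\le_{\mathrm{lex}}a'$ if and only if the non-increasing indegree sequence of $\bar D$ is lexicographically at least that of $\bar{D'}$.

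This equivalence shows that $D$ is an (acyclic) inc-min orientation if and only if $\bar D$ is an (acyclic) dec-max orientation. In particular, a polynomial-time algorithm for the dec-max (acyclic) orientation problem would, after reversing its output, solve the inc-min (acyclic) orientation problem, which is NP-hard for $3$-regular simple graphs by Theorem~\ref{thm:incMinNPC}. The NP-hardness of both dec-max variants on $3$-regular simple graphs follows immediately.

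I do not anticipate a genuine obstacle here, as the statement is a short corollary of Theorem~\ref{thm:incMinNPC}. The only point requiring care is the bookkeeping in the second paragraph: one must confirm that reversal simultaneously (i) turns the non-decreasing sorted vector into the complemented non-increasing sorted vector and (ii) inverts the lexicographic order, so that minimizing for inc-min matches maximizing for dec-max coordinate by coordinate. Everything else is immediate from $3$-regularity and the fact that reversal preserves acyclicity.
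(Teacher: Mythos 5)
Your proposal is correct and follows exactly the paper's argument: reverse all arcs, use $\varrho_{\bar D}(v)=3-\varrho_D(v)$ from $3$-regularity, note that reversal preserves acyclicity, and transfer the hardness from Theorem~\ref{thm:incMinNPC}. The extra bookkeeping you carry out on how complementation interacts with the sorted sequences and the lexicographic order is a sound (and slightly more explicit) version of what the paper leaves implicit.
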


Table~\ref{tab:lexComplexities} summarizes the complexities of the considered orientation and acyclic orientation problems.

\setlength{\tabcolsep}{6pt}
\renewcommand{\arraystretch}{1.5}
\newcolumntype{L}{>{\centering\arraybackslash}m{90pt}}
\newcolumntype{M}{>{\centering\arraybackslash}m{60pt}}
\begin{table}[h]
  \centering
  \begin{tabular}{|L|M|M|M|M|}
    \hline
    & dec-min & inc-max & inc-min & dec-max \\
    \hline
    orientation
    & in P~\cite{borradaile2017egalitarian}
              & in P~\cite{frank2022decreasing2,frank2022decreasing1}
                        & \multirow{3}{*}{\makecell[c]{NP-hard,\\ Thm~\ref{thm:incMinNPC}}}
    & \multirow{3}{*}{\makecell[c]{NP-hard,\\ Cor~\ref{cor:decMaxNPC}}} \\
    \cline{1-3}
    \multirow{2}{*}{acyclic orientation}
    & \multirow{2}{*}{\makecell[c]{NP-hard~\cite{borradaile2017egalitarian},\\ Cor~\ref{cor:decMinNPC}}}
    & \multirow{2}{*}{\makecell[c]{NP-hard,\\ Thm~\ref{thm:incMaxNPC}}}
    &  &  \\
    &  &  &  &  \\
    \hline
  \end{tabular}
  \caption{Complexities of lexicographical orientation problems without indegree bounds.}
  \label{tab:lexComplexities}
\end{table}

\subsubsection{A relaxation: minimizing the maximum (weighted) indegree}\label{sec:decMinIncMax:relax}
The first step toward finding a dec-min orientation is minimizing the maximum indegree.
This section focuses on this problem, which has been studied extensively in the context of unconstrained (possibly cyclic) orientations~\cite{asahiro2007graph, de1995regular, venkateswaran2004minimizing}.

A widely used approach begins with an arbitrary orientation and iteratively reverses directed paths to reduce the indegree of a vertex with highest indegree.
In a natural generalization, we consider edge-weighted graphs: given a graph $G = (V, E)$ with a non-negative weight function $w : E \to \R_+$, our goal is to find an orientation of the edges that minimizes the maximum weighted indegree across all vertices.
Unlike the unweighted case, this problem is weakly NP-hard even for simple bipartite planar graphs~\cite{asahiro2007graph}.
Furthermore, this result was strengthened in~\cite{asahiro2011approximation}, which shows NP-hardness even when the weight function is restricted to values in $\{1, k\}$ for some fixed $k \geq 2$.

We now turn to the problem of minimizing the maximum (weighted) indegree under the additional constraint that the orientation is acyclic.
As discussed in Section~\ref{sec:introduction}, this is equivalent to finding a vertex ordering that minimizes the maximum (weighted) left-degree.

For unweighted graphs, this problem is closely related to the concept of \emph{graph degeneracy}, introduced in~\cite{lick1970k}.
A graph $G = (V, E)$ is called $k$\nobreakdash-degenerate if every induced subgraph $G[V']$ contains a vertex in $V'$ of degree at most $k$.
The degeneracy number $\cev{d}_{\min}(G)$ is the smallest such~$k$.
It is well known that $G$ is $k$\nobreakdash-degenerate if and only if there exists an ordering of its vertices in which each vertex has at most $k$ left-neighbors --- often referred to as a \emph{$k$\nobreakdash-bounded order}.
This directly implies that $G$ admits an acyclic orientation with maximum indegree at most $k$ if and only if it is $k$\nobreakdash-degenerate.
A linear-time algorithm for computing $\cev{d}_{\min}(G)$ was given in~\cite{matula1983smallest}.
It constructs a $\cev{d}_{\min}(G)$\nobreakdash-bounded order by repeatedly removing a vertex of minimum degree from the current subgraph and placing it at the last available position.
Orienting the edges from earlier to later vertices yields an acyclic orientation that minimizes the maximum indegree.

We now extend this approach to the weighted setting.
Given a graph $G = (V, E)$ with a non-negative weight function $w : E \to \R_+$, our goal is to find an acyclic orientation that minimizes the maximum weighted indegree.
Equivalently, we seek a vertex ordering that minimizes the maximum weighted left-degree.
Algorithm~\ref{alg:weigthedMinMaxIndegreeAcyclic} solves this problem in strongly polynomial time.

\begin{algorithm}
  \caption{\hspace{0.5cm}\textsc{Weighted smallest-last ordering}}\label{alg:weigthedMinMaxIndegreeAcyclic}
  \begin{algorithmic}[1]
    \Input A graph $G = (V, E)$ with a weight function $w : E \to \R_+$.
    \Output A vertex order of $G$ that minimizes $\max \{\cev{d}^w(v) : v \in V\}$.
    \State $V' \coloneqq V$; $n \coloneqq |V|$
    \State Let $\sigma_1, \dots, \sigma_n$ denote the desired vertex order.
    \For {$i = n, \dots, 1$}
    \State Choose $\sigma_i \in \argmin \{d^w_{G[V']}(v) : v \in V'\}$.\label{alg:line:select}
    \State $V' \coloneqq V' \setminus \{\sigma_i\}$
    \EndFor
    \State \Return $\sigma_1, \dots, \sigma_n$
  \end{algorithmic}
\end{algorithm}
\FloatBarrier

The algorithm constructs the vertex order from right to left.
At each step, it selects a vertex with the smallest weighted degree in the current subgraph and places it at the last available position.

We now show the correctness of this algorithm.

\begin{theorem}\label{thm:weigthedMinMaxIndegreeAcyclic}
  Given a graph $G = (V,E)$ with a non-negative weight function $w: E \to \R_+$, Algorithm~\ref{alg:weigthedMinMaxIndegreeAcyclic} produces a vertex ordering that minimizes the maximum weighted left-degree.
\end{theorem}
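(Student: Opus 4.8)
The plan is to prove optimality by establishing a lower bound on the maximum weighted left-degree that the greedy ordering automatically matches. The key structural observation is this: for the ordering $\sigma_1,\dots,\sigma_n$ produced by Algorithm~\ref{alg:weigthedMinMaxIndegreeAcyclic}, the weighted left-degree of $\sigma_i$ equals the weighted degree of $\sigma_i$ inside the induced subgraph $G[V_i]$ where $V_i=\{\sigma_1,\dots,\sigma_i\}$; this holds because all neighbors of $\sigma_i$ lying to its left are exactly its neighbors in $V_i$. Since the algorithm chose $\sigma_i$ as a vertex of \emph{minimum} weighted degree in $G[V_i]$ (this is the set $V'$ at the iteration that places $\sigma_i$), we get
\[
\cev{d}^w_{\sigma}(\sigma_i) = d^w_{G[V_i]}(\sigma_i) = \min_{v\in V_i} d^w_{G[V_i]}(v).
\]
Therefore the value produced by the algorithm is $\max_i \min_{v\in V_i} d^w_{G[V_i]}(v)$, and more usefully, each $\cev{d}^w_\sigma(\sigma_i)$ equals the minimum weighted degree of the induced subgraph on $V_i$.

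The core of the argument is a matching lower bound of the classic degeneracy flavor. I would show that for \emph{any} vertex ordering $\tau$ and any nonempty vertex subset $W\subseteq V$, the maximum weighted left-degree under $\tau$ is at least the minimum weighted degree of $G[W]$; indeed, if $u$ is the last vertex of $W$ in the order $\tau$, then every $G[W]$-neighbor of $u$ precedes $u$, so $\cev{d}^w_\tau(u)\ge d^w_{G[W]}(u)\ge \min_{v\in W} d^w_{G[W]}(v)$. Taking the maximum over all orderings on the left and the supremum over all subsets $W$ on the right gives
\[
\min_{\tau\in\mathcal{S}_V}\;\max_{v\in V}\cev{d}^w_\tau(v)\;\ge\;\max_{W\subseteq V,\,W\neq\emptyset}\;\min_{v\in W} d^w_{G[W]}(v).
\]
This quantity on the right is the natural weighted generalization of the degeneracy number, and it is a valid lower bound for the optimum.

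To close the argument I would show that the greedy value \emph{equals} this lower bound. Let $i^*=\argmax_i \cev{d}^w_\sigma(\sigma_i)$ achieve the maximum weighted left-degree in the greedy ordering, and set $W=V_{i^*}$. By the structural observation, $\cev{d}^w_\sigma(\sigma_{i^*})=\min_{v\in W} d^w_{G[W]}(v)$, so the greedy maximum is realized as the minimum weighted degree of the specific induced subgraph $G[V_{i^*}]$, which is at most $\max_{W}\min_{v\in W} d^w_{G[W]}(v)$. Combined with the lower bound of the previous paragraph, every ordering attains maximum weighted left-degree at least this common value, and the greedy ordering attains exactly it; hence the greedy ordering is optimal.

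The main obstacle I anticipate is not the subset lower bound, which is routine, but verifying the exchange/structural claim cleanly in the weighted regime and making sure the greedy choice genuinely realizes the witnessing subgraph $G[V_{i^*}]$. One must be careful that the algorithm builds the order from right to left: the vertex placed last, $\sigma_n$, is the global minimum-weighted-degree vertex in all of $G$, and the claim $\cev{d}^w_\sigma(\sigma_i)=\min_{v\in V_i}d^w_{G[V_i]}(v)$ depends on the fact that at the iteration placing $\sigma_i$, the working set $V'$ equals precisely $V_i$. Confirming this indexing correspondence---that removing vertices from the back of the order corresponds to shrinking $V'$ down through the nested chain $V_n\supseteq V_{n-1}\supseteq\cdots$---is the one place where an off-by-one or orientation-of-time slip could creep in, so I would state it explicitly as the engine of the proof.
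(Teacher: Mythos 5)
Your proposal is correct and is essentially the paper's own argument: the engine in both is to take the prefix set $V_i=\{\sigma_1,\dots,\sigma_i\}$ witnessing the greedy maximum and observe that the \emph{last} vertex of $V_i$ under any competing ordering has all of its $G[V_i]$-neighbors to its left, so its weighted left-degree is at least $\min_{v\in V_i} d^w_{G[V_i]}(v)=\cev{d}^w_\sigma(\sigma_i)$ (using non-negativity of $w$ exactly where you use it). The only difference is presentational --- you route the comparison through the explicit weighted-degeneracy quantity $\max_{W}\min_{v\in W}d^w_{G[W]}(v)$ and thereby also record the min--max identity, whereas the paper compares the two orderings directly --- but the witnessing set and the inequality chain are identical.
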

\begin{proof}
  Let $\sigma = (\sigma_1, \dots, \sigma_n)$ be the order returned by the algorithm, and let $\sigma' = (\sigma'_1, \dots, \sigma'_n)$ be an arbitrary vertex order.
  Take an index $i$ such that $\sigma_i$ is a vertex with maximum weighted left-degree in $\sigma$.
  Let $\sigma'_j$ be the last vertex in $\sigma'$ that appears among ${\sigma_1, \dots, \sigma_i}$.
  Then
  \[
    \cev{d}_{\sigma}^w(\sigma_i) = d^w(\sigma_i, \{\sigma_1, \dots, \sigma_i\}) \leq d^w(\sigma'_j, \{\sigma_1, \dots, \sigma_i\}) \leq d^w(\sigma'_j, \{\sigma'_1, \dots, \sigma'_j\}) = \cev{d}_{\sigma'}^w(\sigma'_j).
  \]
  The first and last equations follow by definition of the weighted left-degree.
  The first inequality holds because $\sigma'_j$ belongs to $\{\sigma_1, \dots, \sigma_i\}$, and $\sigma_i$ was selected as the vertex of minimum weighted degree in this set.
  The second inequality follows from the non-negativity of $w$ and the inclusion $\{\sigma_1, \dots, \sigma_i\} \subseteq \{\sigma'_1, \dots, \sigma'_j\}$.

  This clearly shows that the maximum weighted left-degree in $\sigma$ is no larger than in $\sigma'$, which completes the proof.
\end{proof}

Hence, we can compute an acyclic orientation that minimizes the maximum weighted indegree in strongly polynomial time.
This stands in contrast to the unconstrained (possibly cyclic) case, where the same problem is NP-hard.

The complexity results discussed in this section are summarized in Table~\ref{tab:minMaxIndegreeComplexities}.

\setlength{\tabcolsep}{6pt}
\renewcommand{\arraystretch}{1.5}
\begin{table}[h]
  \centering
  \begin{tabular}{|c|c|c|}
    \hline
    & min max $\varrho$ & min max $\varrho^w$  \\
    \hline
    orientation & in P~\cite{asahiro2007graph, de1995regular, venkateswaran2004minimizing} & NP-hard~\cite{asahiro2011approximation, asahiro2007graph} \\
    \hline
    acyclic orientation & in P~\cite{matula1983smallest} & in P, Thm~\ref{thm:weigthedMinMaxIndegreeAcyclic}\\
    \hline
  \end{tabular}
  \caption{Complexities of finding an orientation that minimizes the maximum (weighted) indegree}\label{tab:minMaxIndegreeComplexities}
\end{table}

It may seem natural to consider the analogous first step of the inc-max problem, that is, finding an orientation which maximizes the minimum (weighted) indegree.
However, this is not meaningful in the acyclic case, since every acyclic orientation necessarily includes at least one source vertex with indegree zero.

\subsection{An exponential dynamic programming algorithm}\label{sec:DinProg}
In Section~\ref{sec:minSumHComplexity}, we proved that it is NP-hard to find a vertex order that minimizes $\sum_{v \in V} \varphi(\cev{d}(v))$ for any discrete strictly convex function $\varphi : \Z_+ \to \R$.
In this section, we provide an exact method for solving the following more general problem.

Let $G = (V, E)$ be a graph, and let $\varphi_v : \Z_+ \to \R$ be a discrete (not necessarily convex) function given for each $v \in V$, which can be evaluated in $O(\poly(|V|, |E|))$ time.
Our goal is to find an order that minimizes $\sum_{v \in V} \varphi_v(\cev{d}(v))$.
The natural brute-force approach is to try all $|V|!$ permutations of the vertices and select one that minimizes the objective value.
Now we present a dynamic programming algorithm that finds an order that minimizes $\sum_{v \in V} \varphi_v(\cev{d}(v))$ in $O(2^{|V|} \poly(|V|, |E|))$ steps.

Let $f(\emptyset) = 0$.
For each non-empty $V' \subseteq V$, in increasing order by $|V'|$, compute and memoize
\begin{equation}\label{eq:defF}
  f(V') = \min_{v \in V'} \left\{ f(V' \setminus \{v\}) + \varphi_v(d(v, V')) \right\},
\end{equation}
and choose
\begin{equation}\label{eq:defG}
  g(V') \in \argmin_{v \in V'} \left\{ f(V' \setminus \{v\}) + \varphi_v(d(v, V')) \right\}.
\end{equation}

Afterwards, construct the optimal order by repeating the following step until no vertices remain: place $g(V')$ to the last free position, where $V'$ is the current vertex set, and remove $g(V')$ from the graph.

We prove correctness by showing that for each $V' \subseteq V$, $f(V')$ gives the optimal value and $g(V')$ can be chosen as the last vertex in an optimal order for $G[V']$.
\begin{theorem}
  For each non-empty subset $V' \subseteq V$, the minimum value of $\sum_{v \in V'} \varphi_v(\cev{d}(v))$ for the graph $G[V']$ is $f(V')$ as defined in~\eqref{eq:defF}, and there exists an optimal order in which the last vertex is $g(V')$ as defined in~\eqref{eq:defG}.
\end{theorem}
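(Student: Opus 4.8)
The plan is to prove this by induction on $|V'|$, establishing both claims simultaneously: that $f(V')$ equals the optimal objective value for $G[V']$, and that $g(V')$ can serve as the last vertex of some optimal order. The base case is $|V'| = 1$: a single vertex $v$ has left-degree $0$, so the only order has value $\varphi_v(0) = f(\{v\} \setminus \{v\}) + \varphi_v(d(v, \{v\})) = \varphi_v(0)$, and trivially $g(\{v\}) = v$ is its last vertex.

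The key observation driving the recursion is that \emph{the last vertex in an ordering sees all of its neighbors as left-neighbors}. Concretely, if $\sigma = (\sigma_1, \dots, \sigma_\ell)$ is any order of $G[V']$ with $\ell = |V'|$, then its final vertex $u = \sigma_\ell$ has left-degree $\cev{d}_\sigma(u) = d(u, V')$, since every neighbor of $u$ in $G[V']$ precedes it. Moreover, the left-degrees of the remaining vertices $\sigma_1, \dots, \sigma_{\ell-1}$ are computed entirely within $G[V' \setminus \{u\}]$ and are unaffected by the presence of $u$ (as $u$ comes last). Hence the total objective decomposes cleanly as
\[
  \sum_{w \in V'} \varphi_w(\cev{d}_\sigma(w)) = \Big(\sum_{w \in V' \setminus \{u\}} \varphi_w(\cev{d}_\sigma(w))\Big) + \varphi_u(d(u, V')).
\]
This is the heart of the argument: fixing the last vertex $u$ reduces the problem to optimally ordering $G[V' \setminus \{u\}]$, with an additive penalty $\varphi_u(d(u, V'))$ that depends only on $u$ and $V'$.

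For the inductive step, I would argue both inequalities. For $f(V') \ge \mathrm{opt}(V')$, take any $u \in V'$; by the induction hypothesis $f(V' \setminus \{u\})$ is the optimal value for $G[V' \setminus \{u\}]$, so appending $u$ to an optimal order of $G[V' \setminus \{u\}]$ yields a valid order of $G[V']$ with value $f(V' \setminus \{u\}) + \varphi_u(d(u, V')) \ge \mathrm{opt}(V')$; minimizing over $u$ gives $f(V') \ge \mathrm{opt}(V')$. Conversely, for $f(V') \le \mathrm{opt}(V')$, take an optimal order of $G[V']$ and let $u$ be its last vertex; by the decomposition above its value is $\big(\text{value of the prefix order on } G[V' \setminus \{u\}]\big) + \varphi_u(d(u, V')) \ge f(V' \setminus \{u\}) + \varphi_u(d(u, V')) \ge f(V')$, where the first inequality uses optimality of $f(V' \setminus \{u\})$ and the second is the definition~\eqref{eq:defF}. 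Combining, $f(V') = \mathrm{opt}(V')$. Finally, the minimizer $g(V')$ chosen in~\eqref{eq:defG} attains $f(V') = f(V' \setminus \{g(V')\}) + \varphi_{g(V')}(d(g(V'), V'))$, so prepending an optimal order of $G[V' \setminus \{g(V')\}]$ (which exists by induction) and placing $g(V')$ last yields an optimal order ending in $g(V')$.

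The argument is essentially routine once the decomposition is in place; there is no serious obstacle, as the convexity of $\varphi_v$ is never needed (the result holds for arbitrary $\varphi_v$). The one point deserving care is the claim that the left-degrees of the non-final vertices are identical whether computed in $G[V']$ or in $G[V' \setminus \{u\}]$ — this is exactly why removing the last vertex is harmless, and it should be stated explicitly to justify gluing the induction hypothesis onto the penalty term. The running time follows immediately: there are $2^{|V|}$ subsets, each recurrence~\eqref{eq:defF} ranges over at most $|V|$ candidates, and each candidate requires one evaluation of $\varphi_v$ together with computing $d(v, V')$, all in $O(\mathrm{poly}(|V|, |E|))$ time.
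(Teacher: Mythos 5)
Your proposal is correct and follows essentially the same route as the paper: induction on $|V'|$, decomposing the objective by fixing the last vertex (whose left-degree is forced to be $d(u,V')$) and gluing the induction hypothesis onto the remaining prefix. The two inequalities you establish are exactly the two directions in the paper's argument, and your explicit remark that the prefix's left-degrees are unaffected by the final vertex is the same observation the paper uses implicitly.
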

\begin{proof}
  We proceed by induction on $|V'|$.
  Suppose the claim holds for all subsets of size smaller than $|V'|$.
  Consider an optimal order $\sigma = v_1, \dots, v_n$ for $G[V']$.
  Then $v_1, \dots, v_{n-1}$ is an optimal order for $G[V' \setminus \{v_n\}]$, so its objective value is $f(V' \setminus \{v_n\})$ by induction.
  Since $v_n$ is last, $\varphi_{v_n}(\cev{d}(v_n)) = \varphi_{v_n}(d(v_n, V'))$.
  Thus the objective value of $\sigma$ is $\left(f(V' \setminus \{v_n\}) + \varphi_{v_n}(d(v_n, V'))\right)$.
  This shows that $f(V')$ is at most the optimal value.
  Conversely, we construct an order for $G$ with objective value $f(V')$.
  Let $v_n \in g(V')$.
  By induction, $f(V' \setminus \{v_n\})$ is the optimal value for $G[V' \setminus \{v_n\}]$.
  Appending $v_n$ to an optimal order for $G[V' \setminus \{v_n\}]$ achieves objective value $f(V')$, completing the proof.
\end{proof}

The running time of this dynamic programming algorithm is $O(2^{|V|} \poly(|V|, |E|))$, since we compute $f(V')$ and $g(V')$ for each of the $2^{|V|}$ subsets, and each computation takes polynomial time in $|V|$ and $|E|$.

\section{Maximizing \texorpdfstring{$\sum_{v\in V} \varrho(v)\delta(v)$ over acyclic orientations}{Sum rho(v)delta(v)}}\label{sec:maxSumLeftRight}

This section is devoted to Problem~\ref{prob:rhoTimesDelta}, another notion of ``equitable'' acyclic orientations, which we define in terms of both the in- and outdegrees, as opposed to Problems~\ref{prob:sumPhiRhoV}-\ref{prob:incMax}, which focus solely on the indegrees.
Our goal is to find an acyclic orientation of a graph $G = (V, E)$ that maximizes $\sum_{v \in V} \varrho(v)\delta(v)$.
The optimal orientations are equitable in the sense that the product $\varrho(v)\delta(v)$ for a vertex $v$ is maximized when its total degree is distributed as evenly as possible between indegree and outdegree.

Without the acyclicity condition, this problem fits into the framework of Problem~\ref{prob:sumPhi_VRhoV}, hence it can be solved in polynomial time by Theorem~\ref{thm:nonAcyclicWithFlow}.
In fact, the optimal orientations are either Eulerian (that is, $\varrho(v) = \delta(v)$ for every $v \in V$) or almost-Eulerian (that is, $|\varrho(v) - \delta(v)| \leq 1$ for every $v \in V$), and such orientations are known to always exists and be computable in polynomial time.

However, the acyclicity condition makes the problem much harder.
Finding an acyclic orientation of $G$ that maximizes $\sum_{v \in V} \varrho(v)\delta(v)$ is equivalent to finding a vertex order that maximizes $\sum_{v \in V} \cev{d}(v)\vec{d}(v)$.
In~\cite{biedl2005balanced}, the authors studied similar ordering problems where the goal was to minimize $\sum_{v \in V} |\cev{d}(v) - \vec{d}(v)|$.
They called an order \emph{perfectly balanced} if $|\cev{d}(v) - \vec{d}(v)| \leq 1$ holds for every vertex $v$, and proved that deciding whether a given graph has a perfectly balanced order is NP-complete even for bipartite simple graphs with maximum degree $6$.
In~\cite{kara2005complexity}, it was further shown that the problem is NP-complete for planar simple graphs with maximum degree $4$ and for $5$\nobreakdash-regular graphs.

For a single vertex $v$, the value of $\cev{d}(v)\vec{d}(v)$ is maximized if and only if $|\cev{d}(v) - \vec{d}(v)| \leq 1$.
Thus, if $G$ has a perfectly balanced order, then any such order maximizes $\sum_{v \in V} \cev{d}(v)\vec{d}(v)$.
By the NP-hardness of finding a perfectly balanced order, we obtain the following result.

\begin{theorem}
  It is NP-hard to find a vertex order that maximizes $\sum_{v \in V} \cev{d}(v)\vec{d}(v)$ even for bipartite simple graphs with maximum degree $6$, planar simple graphs with maximum degree $4$, and for $5$\nobreakdash-regular graphs.
  \FBOX
\end{theorem}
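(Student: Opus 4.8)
The plan is to obtain NP-hardness by a direct reduction from the problem of deciding whether the input graph $G$ admits a \emph{perfectly balanced} order, which is NP-complete on each of the three graph classes in question by~\cite{biedl2005balanced, kara2005complexity}. The entire reduction hinges on the observation, already flagged in the text, that for each fixed vertex the quantity to be maximized is governed solely by how the (order-independent) total degree is split between left- and right-degree.

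First I would record the key numerical fact. For any vertex $v$ and any ordering, the total degree $d(v) = \cev{d}(v) + \vec{d}(v)$ is fixed, so maximizing the term $\cev{d}(v)\vec{d}(v)$ amounts to maximizing the product of two non-negative integers with prescribed sum $d(v)$. This product attains its maximum value $M_v = \lfloor d(v)/2\rfloor\cdot\lceil d(v)/2\rceil$ exactly when $|\cev{d}(v)-\vec{d}(v)|\le 1$: for even $d(v)$ the unique maximizer is the balanced split (difference $0$), and for odd $d(v)$ the maximizers are precisely the splits with difference $1$, a difference of $0$ being impossible by parity. Summing over $v$ then yields the global bound $\sum_{v\in V}\cev{d}(v)\vec{d}(v)\le \sum_{v\in V}M_v$, valid for every ordering, with equality if and only if $|\cev{d}(v)-\vec{d}(v)|\le 1$ holds simultaneously at every vertex, i.e.\ if and only if the ordering is perfectly balanced.

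Next I would assemble the reduction. Because $\sum_{v\in V}M_v$ depends only on the degree sequence, it is computable in polynomial time directly from $G$. Hence any algorithm that computes the maximum of $\sum_{v\in V}\cev{d}(v)\vec{d}(v)$ lets one decide, by a single comparison, whether this maximum equals $\sum_{v\in V}M_v$, which by the previous paragraph is equivalent to deciding whether $G$ possesses a perfectly balanced order. Since that decision problem is NP-complete for bipartite simple graphs of maximum degree $6$, for planar simple graphs of maximum degree $4$, and for $5$-regular graphs, the maximization problem inherits NP-hardness on each of these classes, and the theorem follows.

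The only delicate point I expect is establishing the tightness of the per-vertex characterization, namely that the condition capturing all maximizers of $\cev{d}(v)\vec{d}(v)$ is $|\cev{d}(v)-\vec{d}(v)|\le 1$ rather than the stricter equality, which requires separating the even and odd parity cases as above. Once this short case analysis is in place, the passage to the global statement and the reduction itself are immediate, so no further technical obstacle arises.
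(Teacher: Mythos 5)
Your proposal is correct and is essentially the paper's own argument: the paper likewise observes that $\cev{d}(v)\vec{d}(v)$ is maximized per vertex exactly when $|\cev{d}(v)-\vec{d}(v)|\le 1$, so the global maximum equals the degree-sequence bound $\sum_{v}\lfloor d(v)/2\rfloor\lceil d(v)/2\rceil$ if and only if a perfectly balanced order exists, and NP-hardness then follows from the cited hardness of that decision problem on the three graph classes. Your write-up just makes the comparison step and the parity case analysis more explicit.
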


In the next section, we prove that the problem becomes tractable when the maximum degree is at most $3$.

\subsection{Polynomial-time algorithm for graphs with maximum degree at most $3$}

The problem of finding a vertex ordering that minimizes $\sum_{v \in V} |\cev{d}(v) - \vec{d}(v)|$ can be solved in polynomial time for graphs whose maximum degree is at most~$3$.
This is based on the algorithm from~\cite{biedl2005balanced}, which computes an ordering that minimizes the number of vertices $v$ for which either $\cev{d}(v) = 0$ or $\vec{d}(v) = 0$.
We show that a modification of this algorithm solves our maximization problem, as long as the graph has maximum degree at most~$3$.

Before describing the algorithm, we introduce some necessary definitions.
A vertex order of a graph $G = (V, E)$ is called an \emph{$s$\nobreakdash-$t$ order} if vertices $s$ and $t$ appear first and last, respectively, and for every vertex $v \in V \setminus \{s, t\}$ with degree at least $2$, we have both $\cev{d}(v) \geq 1$ and $\vec{d}(v) \geq 1$.
It is known that every biconnected graph admits an $s$\nobreakdash-$t$ order for any pair of distinct vertices $s, t \in V$, and such an order can be computed in polynomial time~\cite{even1976computing}.
Clearly, for biconnected $3$\nobreakdash-regular graphs, any $s$\nobreakdash-$t$ order maximizes the expression $\sum_{v \in V} \cev{d}(v) \vec{d}(v)$.
We extend this result to all connected graphs with maximum degree at most~$3$.
Let $B_1, \dots, B_r$ denote the biconnected components of $G$.
These components form a tree structure $T$, where each vertex represents either a component or a cut vertex of the graph, and there is an edge between a component and a cut vertex if the component contains the cut vertex.
A component is called an \emph{end-component} if it contains exactly one cut vertex --- that is, end-components correspond to the leaves of $T$.

The following algorithm is a modified version of the one in~\cite{biedl2005balanced}, adapted to maximize the sum $\sum_{v\in V}\cev{d}(v)\vec{d}(v)$ for graphs with maximum degree at most~$3$.

\begin{algorithm}[H]
  \caption{\hspace{0.5cm}\textsc{Combine $s$\nobreakdash-$t$ orderings}}\label{alg:combineST}
  \begin{algorithmic}[1]
    \Input A connected graph $G = (V, E)$ with maximum degree at most $3$ and $|V| \geq 2$.
    \Output A vertex order of $G$ that maximizes $\sum_{v \in V} \cev{d}(v) \vec{d}(v)$.
    \State Compute the tree $T$ of biconnected components of $G$.
    \State Choose an arbitrary end-component $B_1$ of $T$.
    \State Perform a depth-first traversal of $T$ starting from $B_1$, and label the components in traversal order as $B_1, \dots, B_r$.\label{alg:line:depthFirst}
    \State Let $t_1 \in B_1$ be the unique cut vertex of $B_1$.
    \State Choose $s_1 \in B_1 \setminus \{t_1\}$ such that $d_G(s_1)$ is minimized.\label{alg:line:chooseS_1}
    \State Compute an $s_1$\nobreakdash-$t_1$ order of $B_1$, denoted by $\sigma^1$.
    \For{$i = 2,\dots, r$}
      \State Let $s_i$ be a cut vertex shared with a previously processed block $B_j$ for some $j < i$.
      \If{$B_i$ is an end-component}
      \State Choose $t_i \in B_i \setminus \{s_i\}$ such that $d_G(t_i)$ is minimized.\label{alg:line:chooseT_i}
      \Else
        \State Let $t_i$ be a cut vertex shared with a later component $B_j$ for some $j > i$.
      \EndIf
      \State Compute an $s_i$\nobreakdash-$t_i$ order $v_1^i, \dots, v_{n_i}^i$ of $B_i$.
      \State Append $v_2^i, \dots, v_{n_i}^i$ to the end of the current order $\sigma^{i-1}$ to obtain $\sigma^i$.
    \EndFor
    \State \Return $\sigma^r$
  \end{algorithmic}
\end{algorithm}

Algorithm~\ref{alg:combineST} computes $s$\nobreakdash-$t$ orders for the biconnected components of $G$ and combines them into a single vertex ordering.

\begin{theorem}
  For a connected graph $G = (V, E)$ with maximum degree at most~$3$, Algorithm~\ref{alg:combineST} computes, in polynomial time, a vertex order that maximizes the sum $\sum_{v \in V} \cev{d}(v)\vec{d}(v)$.
\end{theorem}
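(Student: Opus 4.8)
The plan is to recast the objective through a per-vertex loss. For a vertex $v$ with $d(v)\le 3$ one has $\cev d(v)\vec d(v)\le \mu(v)$, where $\mu(v):=\lfloor d(v)/2\rfloor\lceil d(v)/2\rceil$ equals $0,0,1,2$ for $d(v)=0,1,2,3$; when $d(v)\ge 2$ equality holds exactly when $v$ is neither a source ($\cev d(v)=0$) nor a sink ($\vec d(v)=0$), and when $d(v)\le 1$ the product is always $0$. Hence, with $\mathrm{UB}=\sum_{v}\mu(v)$, every order $\sigma$ satisfies $\sum_v\cev d(v)\vec d(v)=\mathrm{UB}-\mathrm{loss}(\sigma)$, where $\mathrm{loss}(\sigma)=\sum\{\mu(v): v\text{ a source or sink of }\sigma\}$, so maximizing the objective is the same as minimizing $\mathrm{loss}$, and only degree-$2$ and degree-$3$ endpoints contribute. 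For the running time I would note that the block-cut tree is linear, the component $s$-$t$ orders are polynomial by~\cite{even1976computing}, and the concatenation is linear; I would also check the DFS labeling keeps each choice legal, using that a non-leaf block of the block-cut tree carries at least two cut vertices, so every non-end-component $B_i$ owns a forward cut vertex to serve as $t_i$. (If $G$ is biconnected there are no cut vertices and $r=1$; this is the base case, settled directly by an $s$-$t$ order with minimum-degree endpoints, so I assume $G$ has a cut vertex.)

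Next I would pin down the sources and sinks of the output $\sigma^r$, claiming the only source is $s_1$ and the only sinks are the vertices $t_i$ of the end-components $B_i$ with $i\ge 2$. A non-cut vertex lies in a unique block $B_i$; unless it is $s_i$, or is the local sink of an end-component, the $s$-$t$ property supplies it with both a left- and a right-neighbour inside $B_i$, so it is balanced. A cut vertex $c$ is balanced too: in the first block containing it (in DFS order) it is placed either internally or as that block's $t$, giving a left-neighbour, while it is the entry vertex $s$ of a later child block, giving a right-neighbour. Therefore each end-component $B$ contributes exactly one source or sink, namely the degree-minimizing vertex the algorithm picks in $B\setminus\{c_B\}$, and summing yields $\mathrm{loss}(\sigma^r)=\sum_{B\text{ end-comp}}\min_{u\in B\setminus\{c_B\}}\mu(u)$.

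The decisive step is a matching lower bound: every order $\sigma$ satisfies $\mathrm{loss}(\sigma)\ge\sum_{B\text{ end-comp}}\min_{u\in B\setminus\{c_B\}}\mu(u)$. Fixing an end-component $B$ with cut vertex $c_B$, the restriction of $\sigma$ to $B$ is a linear order, so $B$ has a local source $p$ and a local sink $q$ with $p\ne q$; at least one of them is not $c_B$, and that vertex, being non-cut, has all its $G$-neighbours in $B$ and is therefore a genuine source or sink of $\sigma$, contributing at least $\min_{u\in B\setminus\{c_B\}}\mu(u)$ to $\mathrm{loss}(\sigma)$. Since the charged vertices are non-cut, they are pairwise distinct across end-components, so these contributions add without overcounting. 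Combined with the previous step this gives $\mathrm{loss}(\sigma^r)\le\mathrm{loss}(\sigma)$ for all $\sigma$, i.e.\ optimality.

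The main obstacle I expect is exactly this charging argument — establishing that each end-component forces a global source or sink among its non-cut vertices, and that the charges stay disjoint — together with the bookkeeping of the preceding step ensuring that concatenating the blocks in DFS order creates no stray source or sink at a shared cut vertex. The cases I would treat most carefully are $K_2$ pendant components, where the minimum $\mu$ is $0$ and the bound is vacuous, and cut vertices of degree $3$ lying in several blocks; here the maximum-degree-$3$ hypothesis is precisely what forces such a cut vertex to have degree exactly $2$ inside any large biconnected component, keeping the block-cut structure simple enough for the balancedness claims to hold.
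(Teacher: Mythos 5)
Your proposal is correct and follows essentially the same route as the paper's proof: the same reformulation of the objective as $\mathrm{UB}$ minus a per-vertex imbalance (your $\mu(v)$ equals the paper's $d(v)-1$ on sources and sinks of degree at least one), the same upper bound on the algorithm's loss via the structure of the concatenated $s$\nobreakdash-$t$ orders, and the same lower bound obtained by charging each end-component a forced source or sink among its non-cut vertices. Your explicit handling of the biconnected case and of pendant $K_2$ blocks is a welcome extra bit of care, but the argument is the paper's.
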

\begin{proof}
  The algorithm clearly runs in polynomial time.

  To prove correctness, we define the \emph{imbalance} of a vertex $v$ in an ordering $\sigma$ as
  \[
    \mathcal{I}_\sigma(v) = \left\lfloor \frac{d(v)}{2} \right\rfloor \left\lceil \frac{d(v)}{2} \right\rceil - \cev{d}_\sigma(v) \vec{d}_\sigma(v).
  \]
  This quantity is always non-negative, as the product $\cev{d}_\sigma(v)\vec{d}_\sigma(v)$ is maximized when the indegree and outdegree are as balanced as possible.
  The term $\left\lfloor \frac{d(v)}{2} \right\rfloor \left\lceil \frac{d(v)}{2} \right\rceil$ is an upper bound on this product.
  Therefore, maximizing $\sum_{v \in V} \cev{d}(v)\vec{d}(v)$ is equivalent to minimizing $\sum_{v \in V} \mathcal{I}_\sigma(v)$.
  We prove that the ordering $\sigma^r$ returned by the algorithm achieves this minimum.

  For any vertex $v$ and order $\sigma$, the imbalance is given by
  \[
    \mathcal{I}_\sigma(v) =
    \begin{cases}
      2 & \text{if } d(v) = 3 \text{ and } \cev{d}_\sigma(v) = 0 \text{ or } \vec{d}_\sigma(v) = 0, \\
      1 & \text{if } d(v) = 2 \text{ and } \cev{d}_\sigma(v) = 0 \text{ or } \vec{d}_\sigma(v) = 0, \\
      0 & \text{otherwise.}
    \end{cases}
  \]
  That is, $\mathcal{I}_\sigma(v) = d(v) - 1$ when all edges of $v$ are directed either forward or backward, and $0$ otherwise.

  Let $T$ denote the tree of biconnected components of $G$, and let $B_1, \dots, B_r$ denote the components in the depth-first traversal order used in Line~\ref{alg:line:depthFirst} of the algorithm.
  Let $B_{i_1}, \dots, B_{i_\ell}$ be the end-components of $T$, where $B_{i_1} = B_1$.
  For each $j \in \{1, \dots, \ell\}$, let $q_{i_j}$ denote the unique cut vertex of $B_{i_j}$.
  By construction, using the notation of Algorithm~\ref{alg:combineST}, we have $q_{i_1} = t_{i_1}$ and $q_{i_j} = s_{i_j}$ for $j \geq 2$.

  In the final ordering $\sigma^r$ produced by the algorithm, every vertex $v \in V \setminus \{s_{i_1}, t_{i_2}, \dots, t_{i_\ell}\}$ satisfies $\cev{d}_{\sigma^r}(v) \geq 1$ and $\vec{d}_{\sigma^r}(v) \geq 1$, and thus $\mathcal{I}_{\sigma^r}(v) = 0$.
  It follows that
  \begin{equation}\label{eq:imbalanceAlg}
    \sum_{v \in V} \mathcal{I}_{\sigma^r}(v) = \mathcal{I}_{\sigma^r}(s_{i_1}) + \sum_{j = 2}^\ell \mathcal{I}_{\sigma^r}(t_{i_j}) \leq (d(s_{i_1}) - 1) + \sum_{j = 2}^\ell (d(t_{i_j}) - 1).
  \end{equation}

  Now consider an arbitrary vertex ordering $\sigma$.
  In each end-component $B_{i_j}$, the only vertex connected to the rest of the graph is $q_{i_j}$.
  Therefore, the first or last vertex in $B_{i_j}$ must lie in $B_{i_j} \setminus \{q_{i_j}\}$, and hence there exists some $v_{i_j} \in B_{i_j} \setminus \{q_{i_j}\}$ such that $\cev{d}_\sigma(v_{i_j}) = 0$ or $\vec{d}_\sigma(v_{i_j}) = 0$.

  Thus,
  \[
    \mathcal{I}_\sigma(v_{i_j}) = d(v_{i_j}) - 1,
  \]
  and so we obtain the lower bound
  \begin{equation}\label{eq:ImbalanceRandom}
    \sum_{v \in V} \mathcal{I}_\sigma(v) \geq \sum_{j = 1}^\ell \mathcal{I}_\sigma(v_{i_j}) = \sum_{j = 1}^\ell (d(v_{i_j}) - 1).
  \end{equation}

  From the choice of $s_{i_1}$ and $t_{i_j}$ in Lines~\ref{alg:line:chooseS_1} and~\ref{alg:line:chooseT_i}, we have $d(s_{i_1}) \leq d(v_{i_1})$ and $d(t_{i_j}) \leq d(v_{i_j})$ for each $j \in \{ 1, \dots, \ell \}$.
  Combining these inequalities with~\eqref{eq:imbalanceAlg} and~\eqref{eq:ImbalanceRandom}, we obtain
  \[
    \sum_{v \in V} \mathcal{I}_{\sigma^r}(v) \leq \sum_{v \in V} \mathcal{I}_\sigma(v),
  \]
  which proves that $\sigma^r$ minimizes the total imbalance and thus maximizes $\sum_{v \in V} \cev{d}(v)\vec{d}(v)$.
\end{proof}

So far, we have shown that the problem of maximizing $\sum_{v \in V} \cev{d}(v)\vec{d}(v)$ can be solved in polynomial time for graphs with maximum degree at most $3$.
However, the problem becomes NP-hard when restricted to simple graphs with maximum degree at most $4$.

We now turn to approximation algorithms for the general case of maximizing $\sum_{v \in V} \cev{d}(v)\vec{d}(v)$.
It is worth noting that Biedl et al.~\cite{biedl2005balanced} presented a $\frac{13}{8}$\nobreakdash-approximation algorithm for the related problem of minimizing $\sum_{v \in V} \max\{\cev{d}(v), \vec{d}(v)\}$.
Their algorithm incrementally constructs an ordering by inserting vertices one at a time, always placing each vertex at the position that minimizes the imbalance among its already inserted neighbors --- typically, by placing it in the middle of its neighbors.
While this approach appears promising for our objective as well, it turns out that it performs poorly in the worst case.
In fact, one can construct a family of graphs for which the approximation ratio of this strategy for maximizing $\sum_{v \in V} \cev{d}(v)\vec{d}(v)$ tends to infinity.

In what follows, we analyze the expected approximation ratio achieved by a uniformly random vertex order.
We then present a deterministic algorithm that guarantees the same approximation ratio.

\subsection{Approximation ratio of orienting by a random permutation}

Next, we analyze the expected approximation ratio achieved by a uniformly random ordering of the vertices for the problem of maximizing $\sum_{v \in V} \cev{d}(v)\vec{d}(v)$.

\begin{theorem}\label{thm:random3Approx}
  A uniformly random permutation of the vertices yields a $3$\nobreakdash-approximate solution in expectation for maximizing $\sum_{v \in V} \cev{d}(v)\vec{d}(v)$.
\end{theorem}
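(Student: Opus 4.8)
The plan is to compare, edge by edge, the expected objective value of a uniformly random ordering against a global upper bound on the optimum. First I would fix a uniformly random permutation $\sigma$ and compute $\mathbb{E}\!\left[\sum_{v\in V}\cev{d}(v)\vec{d}(v)\right]$ by linearity of expectation. The key observation is that $\cev{d}(v)\vec{d}(v)$ counts the number of ordered pairs of edges $(e,f)$ incident to $v$ such that $e$ lies to the left of $v$ and $f$ lies to the right of $v$ in $\sigma$ — equivalently, the number of \emph{paths of length two} centered at $v$ whose two endpoints fall on opposite sides of $v$. So I would reorganize the sum as a sum over the set $P$ of length-two paths (``cherries'') $u\!-\!v\!-\!w$ in $G$, and for each such cherry compute the probability that $v$ lies between $u$ and $w$ in the random order. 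For three distinct vertices this probability is exactly $\tfrac13$, since each of the three relative orderings of $\{u,v,w\}$ placing a given one in the middle is equally likely. Hence $\mathbb{E}\!\left[\sum_{v\in V}\cev{d}(v)\vec{d}(v)\right]=\tfrac13\,|P|$, where $|P|=\sum_{v\in V}\binom{d(v)}{2}$ counts all cherries.

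Next I would produce the matching upper bound on the optimum. For every vertex $v$ and every ordering, $\cev{d}(v)\vec{d}(v)\le\left\lfloor\frac{d(v)}{2}\right\rfloor\left\lceil\frac{d(v)}{2}\right\rceil$, using the same balancing bound already exploited in the maximum-degree-$3$ analysis above. Summing over $v$ gives
\[
\operatorname{opt}\;\le\;\sum_{v\in V}\left\lfloor\tfrac{d(v)}{2}\right\rfloor\left\lceil\tfrac{d(v)}{2}\right\rceil.
\]
The approximation ratio will then follow if I can show, vertex by vertex, that
\[
\left\lfloor\tfrac{d(v)}{2}\right\rfloor\left\lceil\tfrac{d(v)}{2}\right\rceil\;\le\;3\cdot\tfrac13\binom{d(v)}{2}=\binom{d(v)}{2},
\]
i.e. that $\left\lfloor\frac{d}{2}\right\rfloor\left\lceil\frac{d}{2}\right\rceil\le\binom{d}{2}=\frac{d(d-1)}{2}$ for every non-negative integer $d$. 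This is an elementary inequality: writing $d=2t$ gives $t^2\le t(2t-1)$ and $d=2t+1$ gives $t(t+1)\le\frac{(2t+1)2t}{2}=t(2t+1)$, both of which hold. Combining the per-vertex bounds yields $\operatorname{opt}\le\sum_v\binom{d(v)}{2}=|P|=3\,\mathbb{E}[\text{random objective}]$, which is exactly the claimed factor-$3$ guarantee.

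The only subtlety I would be careful about is the treatment of parallel edges, since the theorem is stated for loop-free multigraphs. When two cherry-endpoints coincide or when parallel edges produce a ``cherry'' $u\!-\!v\!-\!u$ on only two distinct vertices, the probability that $v$ separates the two incident edges is not $\tfrac13$ but $\tfrac12$, and the identity $\cev{d}(v)\vec{d}(v)=\sum_{\text{cherries at }v}\chi_{v\text{ in middle}}$ must be interpreted with edge-multiplicities rather than vertex-sets. I expect this multigraph bookkeeping to be the main (though still routine) obstacle: one must verify that counting cherries with multiplicity and using $\Pr\ge\tfrac13$ in every case still gives $\mathbb{E}[\text{objective}]\ge\tfrac13\sum_v\binom{d(v)}{2}$, so that the upper bound on $\operatorname{opt}$ divided by the lower bound on the expectation stays at most $3$. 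The degenerate two-vertex cherries only \emph{help} (probability $\tfrac12>\tfrac13$), so the bound is safe, but this needs to be stated explicitly rather than glossed over.
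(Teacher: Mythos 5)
Your core argument --- decomposing $\cev{d}(v)\vec{d}(v)$ into length-two paths (cherries) centered at $v$ and observing that a uniformly random order realizes each such cherry with probability $\tfrac13$ --- is exactly the paper's argument, and for \emph{simple} graphs your proof is correct (the detour through $\bigl\lfloor \tfrac{d}{2}\bigr\rfloor\bigl\lceil \tfrac{d}{2}\bigr\rceil\le\binom{d}{2}$ is valid, if unnecessary: a contributing left/right pair of edges at $v$ is in particular a pair of edges at $v$, so $\cev{d}(v)\vec{d}(v)\le\binom{d(v)}{2}$ holds directly).

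The gap is in your multigraph bookkeeping, precisely the point you flag as the main obstacle. A degenerate cherry formed by two parallel edges between $u$ and $v$ is split by $v$ with probability $0$, not $\tfrac12$: both edges have the same far endpoint $u$, so either $u$ precedes $v$ and both are left edges of $v$, or $u$ follows $v$ and both are right edges. Consequently such pairs do not ``only help''; they pull $\E(v)$ strictly below $\tfrac13\binom{d(v)}{2}$, and your identity $\E[\text{objective}]=\tfrac13\sum_{v}\binom{d(v)}{2}$ is false (two vertices joined by two parallel edges: the left side is $0$, the right side is $\tfrac23$). The chain $\operatorname{opt}\le\sum_v\binom{d(v)}{2}=3\,\E[\text{objective}]$ therefore does not close. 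The repair is the one the paper uses: a pair of parallel edges at $v$ also contributes nothing to $\cev{d}(v)\vec{d}(v)$ in \emph{any} ordering, since a left edge and a right edge of $v$ necessarily have distinct far endpoints. Hence both the optimal product at $v$ and $\E(v)$ should be measured against the number of edge pairs at $v$ with distinct far endpoints, i.e.\ the number of genuine length-two paths centered at $v$: the former is at most this count, the latter is exactly one third of it, which gives the factor $3$ vertex by vertex. This also forces you to abandon the $\bigl\lfloor \tfrac{d}{2}\bigr\rfloor\bigl\lceil \tfrac{d}{2}\bigr\rceil$ upper bound, which can exceed the number of non-parallel pairs (again two parallel edges: $1>0$).
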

\begin{proof}
  Let $n$ denote the number of vertices, and let $\mathcal{S}_V$ denote the set of all permutations of $V$.
  For each $v \in V$, define $\E(v)$ as the expected value of $\cev{d}_\sigma(v)\vec{d}_\sigma(v)$ when $\sigma$ is drawn uniformly at random from $\mathcal{S}_V$, that is,
  \[
    \E(v)=\frac{1}{n!}\sum_{\sigma\in \mathcal{S}_V}\cev{d}_{\sigma}(v)\vec{d}_{\sigma}(v).
  \]
  Then the expected objective value of a random vertex order is
  \[
    \frac{\sum_{\sigma\in \mathcal{S}_V}\sum_{v\in V}\cev{d}_{\sigma}(v)\vec{d}_{\sigma}(v)}{n!}=\sum_{v\in V}\frac{\sum_{\sigma\in \mathcal{S}_V}\cev{d}_{\sigma}(v)\vec{d}_{\sigma}(v)}{n!}=\sum_{v\in V}\E(v).
  \]

  Thus, to prove the theorem, it suffices to show that $\E(v) \geq \frac{1}{3} \cev{d}_\sigma(v)\vec{d}_\sigma(v)$ for each vertex $v \in V$ for every optimal vertex order $\sigma$.

  Observe that for any vertex order $\sigma$ and any vertex $v \in V$, the value $\cev{d}_\sigma(v)\vec{d}_\sigma(v)$ equals the number of length-two paths $uvw$ such that $u$ precedes $v$ and $w$ follows $v$ in $\sigma$.
  Therefore, for each vertex $v\in V$, the product $\cev{d}(v)\vec{d}(v)$ in an optimal vertex order is at most the maximum number of such paths, while $\E(v)$ is their expected number in a random vertex order.
  Since there are six possible orderings of the vertices $u$, $v$, and $w$, and $v$ appears between $u$ and $w$ in exactly two of them, the probability that $v$ lies between $u$ and $w$ in a random vertex order is $\frac{1}{3}$.
  Hence, $\E(v)$ is the third of the maximum number of length-two paths with inner vertex $v$.
  Summing over all vertices yields that a random vertex order achieves, in expectation, an approximation ratio~$3$.
\end{proof}

\subsection{Derandomized approximation algorithm}
Now we present a deterministic polynomial-time $3$\nobreakdash-approximation algorithm for maximizing the objective function $\sum_{v\in V} \cev{d}(v)\vec{d}(v)$.

Let $G = (V, E)$ be a graph, and let $n = |V|$.
For $0 \leq i \leq n$ and a sequence of distinct vertices $v_1, \dots, v_i \in V$, let $\E_{v_1,\dots,v_i}$ denote the expected value of the objective function over all permutations of the vertex set $V$ in which the first $i$ positions are fixed to $v_1, \dots, v_i$ in this order.
Formally, we define
\[
  \E_{v_1,\dots,v_i} = \frac{1}{(n-i)!}\sum_{\substack{\sigma\in \mathcal{S}_V, \\\sigma_j=v_j\ \forall j\leq i}}\sum_{v\in V} \cev{d}_{\sigma}(v)\vec{d}_{\sigma}(v)
\]
for every sequence of distinct vertices $v_1, \dots, v_i \in V$.

We now describe the deterministic algorithm, which is a derandomized version of the randomized algorithm from the previous section, using the method of conditional expectations.
The algorithm fixes the vertices in the order one by one from left to right.

At the $i^{\text{th}}$ step, vertices $v_1, \dots, v_{i-1}$ have already been fixed in positions $1$ through $(i - 1)$.
For each remaining vertex $v \in V \setminus \{v_1, \dots, v_{i-1}\}$, we compute $\E_{v_1,\dots,v_{i-1},v}$, and choose a vertex $v$ that maximizes this expected value to be placed in position $i$.

To establish that the algorithm runs in polynomial time, it suffices to show that the expected value $\E_{v_1,\dots,v_i}$ can be computed in polynomial time for any fixed sequence of distinct vertices $v_1,\dots,v_i \in V$.
We proceed to prove this next.
\begin{claim}
For every sequence of distinct vertices $v_1,\dots,v_i \in V$, the expected value $\E_{v_1,\dots,v_i}$ can be computed in polynomial time.
\end{claim}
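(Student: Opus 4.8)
The plan is to compute $\E_{v_1,\dots,v_i}$ via linearity of expectation, after decomposing the objective into a sum of indicator variables. Write $F = \{v_1,\dots,v_i\}$ for the fixed prefix, whose members occupy positions $1,\dots,i$ (with $v_j$ at position $j$), while the free vertices $V\setminus F$ occupy the positions $i+1,\dots,n$ through a uniformly random bijection. For every vertex $v\in V$ and every ordering $\sigma$ respecting the prefix, the product $\cev{d}_{\sigma}(v)\vec{d}_{\sigma}(v)$ counts the ordered pairs $(e,f)$ of edges incident to $v$ for which the endpoint of $e$ other than $v$ precedes $v$ and the endpoint of $f$ other than $v$ follows $v$ in $\sigma$. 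Hence, denoting by $u_e$ and $u_f$ these respective endpoints,
\[
  \E_{v_1,\dots,v_i}
  = \sum_{v\in V}\ \sum_{\substack{e,f\ \text{incident to } v \\ e\neq f}}
  \Pr\!\left[\,u_e \text{ precedes } v \text{ and } u_f \text{ follows } v\,\right],
\]
where the probability is taken over the conditional distribution described above. The number of summands is at most $\sum_{v\in V} d(v)^2 \leq 2|E|\cdot\Delta(G)$, which is polynomial in the input size, so it suffices to evaluate each probability in polynomial time.

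First I would reduce each probability to elementary counting by a case analysis on which of the three vertices $u_e$, $v$, $u_f$ lie in the fixed set $F$. If $u_e = u_f$ (possible only through parallel edges), the event requires the same vertex to lie both before and after $v$, so the probability is $0$. Otherwise $u_e$, $v$, $u_f$ are three distinct vertices, and the conditional probability of the event $\{u_e \text{ before } v\}\cap\{u_f \text{ after } v\}$ depends only on how many of them are free and on the number of free positions lying before and after the relevant fixed positions. For instance, if all three are fixed the probability is $\chi_A\in\{0,1\}$ for the deterministic event $A$; if $v$ is fixed at position $p$ while $u_e,u_f$ are free, it equals $\frac{a}{n-i}\cdot\frac{b}{\,n-i-1\,}$, where $a$ and $b$ are the numbers of free positions before and after $p$; and if all three are free, symmetry of the uniform bijection makes every relative order of three specified free vertices equally likely, giving probability $\tfrac16$. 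The remaining mixed cases are handled identically, each yielding a closed-form expression computable in $O(1)$ time once the counts of free positions (which depend only on the prefix length $i$) are precomputed.

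The only real work is the bookkeeping: there is a constant number of cases according to the membership pattern of $(u_e,v,u_f)$ in $F$, and for each one must correctly account for the conditional distribution induced by the uniform random bijection of the free vertices onto the free positions. No case is individually difficult, and the parallel-edge subtlety is absorbed automatically by the $u_e=u_f$ rule. Summing the polynomially many closed-form contributions computed above therefore yields $\E_{v_1,\dots,v_i}$ in polynomial time, as claimed.
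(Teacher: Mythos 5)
Your proof is correct, but it takes a genuinely different route from the paper's. The paper splits the computation by vertex: for a fixed vertex $v_j$ the term is deterministic, and for each free vertex $v$ it runs a dynamic program $f(j,k,\ell)$ over the free neighbors of $v$ to obtain the full distribution of the right-degree $\ell$, then evaluates the weighted average of $(d(v)-\ell)\ell$. You instead expand $\cev{d}_\sigma(v)\vec{d}_\sigma(v)$ as a sum of indicators over ordered pairs of edges incident to $v$, and compute each probability $\Pr[u_e \text{ before } v,\ u_f \text{ after } v]$ in closed form by a case analysis on which of $u_e,v,u_f$ lie in the fixed prefix; parallel edges are absorbed by the $u_e=u_f$ rule, exactly as in the paper's DP where the multiplicity $d(v,\{w_j\})$ plays the same role. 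Your decomposition is valid (the $e=f$ and $u_e=u_f$ terms vanish, and the two-free-vertex joint probability $\frac{ab}{(n-i)(n-i-1)}$ correctly accounts for the dependence), the number of summands $\sum_v d(v)^2\le 2|E|\Delta(G)$ is polynomial, and each case reduces to an $O(1)$ formula --- several of which degenerate to $0$ or $1$ since every free position follows every fixed position. What your approach buys is the elimination of the dynamic program entirely: it is a single linearity-of-expectation computation that, in effect, generalizes to multigraphs the closed-form expression the paper derives only for simple graphs in its subsequent remark. What the paper's DP buys is the full right-degree distribution of each vertex, which is more information than needed here but is a reusable object. Both arguments are sound.
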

\begin{proof}
We call a permutation \emph{relevant} if it begins with the vertices $v_1,\dots,v_i$ in this order.

For each vertex $v \in V$, let $\E_{v_1,\dots,v_i}(v)$ denote the expected value of the term $\cev{d}(v)\vec{d}(v)$ under a uniformly random relevant permutation of $V$.
By linearity of expectation and simple rearrangement, we have
\[
  \E_{v_1,\dots,v_i}
  = \frac{1}{(n-i)!}\sum_{\substack{\sigma\in \mathcal{S}_V,\\ \sigma_j=v_j\ \forall j\leq i}}\sum_{v\in V} \cev{d}_{\sigma}(v)\vec{d}_{\sigma}(v)
  = \sum_{v\in V}\frac{1}{(n-i)!}\sum_{\substack{\sigma\in \mathcal{S}_V,\\ \sigma_j=v_j\ \forall j\leq i}}\!\!\!\!\!\cev{d}_{\sigma}(v)\vec{d}_{\sigma}(v)
  = \sum_{v\in V}\E_{v_1,\dots,v_i}(v).
\]

Thus, it suffices to show that each term $\E_{v_1,\dots,v_i}(v)$ can be computed in polynomial time.
Let $V_j = \{v_1, \dots, v_j\}$ for $j\in\{1,\dots,i\}$.

First, suppose that $v = v_j$ for some $j \leq i$.
In this case, $\E_{v_1,\dots,v_i}(v_j) = d(v_j, V_{j-1}) d(v_j, V \setminus V_j)$, because the left and right-degrees of the vertex $v_j$ remain unchanged across all relevant permutations.

Second, suppose that $v \in V \setminus V_i$.
We present a dynamic programming method for computing $\E_{v_1,\dots,v_i}(v)$, exploiting the fact that only the vertices in $V \setminus V_i$ may appear to the right of $v$ in any relevant permutation.
Let us introduce the following notations.
Denote the neighbors of $v$ within $V \setminus V_i$ by $w_1, \dots, w_p$.
Define $f(j, k, \ell)$ to be the number of permutations of the set $\{v, w_1, \dots, w_j\}$ in which $v$ has exactly $k$ succeeding vertices, and the right-degree of $v$ is exactly $\ell$.
Here, $j, k \in \{0, \dots, p\}$ and $\ell \in \{0, \dots, d(v, V \setminus V_i)\}$.
We set $f(j, k, \ell) = 0$ for all values outside these ranges.
As the base case, observe that
\[
  f(0, k, \ell) =
  \begin{cases}
    1 & \text{if } k = \ell = 0, \\
    0 & \text{otherwise}.
  \end{cases}
\]

\begin{claim}
The function $f(j,k,\ell)$ satisfies the following recurrence
\[
  f(j,k,\ell) = k \cdot f(j-1, k-1, \ell - d(v, \{w_j\})) + (j - k) \cdot f(j-1, k, \ell).
\]
\end{claim}
\begin{proof}
  Let $\sigma$ be a permutation counted by $f(j,k,\ell)$.
  There exists a unique permutation $\sigma'$ such that $\sigma$ is obtained from $\sigma'$ by inserting $w_j$ into an appropriate position --- we say that $\sigma$ \emph{corresponds} to $\sigma'$.
  Depending on the relative position of $w_j$ with respect to our vertex $v$, this $\sigma'$ is counted by either $f(j-1, k-1, \ell - d(v,\{w_j\}))$ or $f(j-1, k, \ell)$.

  Now, consider a permutation $\sigma'$ counted by $f(j-1, k-1, \ell - d(v,\{w_j\}))$.
  Since there are exactly $k$ possible positions to insert $w_j$ after $v$, there are precisely $k$ permutations $\sigma$ corresponding to this $\sigma'$.
  Similarly, if $\sigma'$ is counted by $f(j-1, k, \ell)$, then there are exactly $(j - k)$ positions to insert $w_j$ before $v$, resulting in $(j - k)$ corresponding permutations $\sigma$.
  This establishes the validity of the recurrence.
\end{proof}

Observe that if we compute the values $f(j,k,\ell)$ in increasing order by $j$ and memoize the results, then all values on the right-hand side of the recurrence are readily available at each step.
Consequently, we can compute $f(j,k,\ell)$ for all $j,k \in \{0,\dots,p\}$ and $\ell \in \{0,\dots,d(v, V \setminus V_i)\}$ in $O(|V|^2|E|)$ time.

Note that each value $f(j,k,\ell) \leq |V|!$, so storing them requires at most $O(|V| \log |V|)$ bits.
Together, these observations imply that the dynamic programming algorithm runs in polynomial time.

We now show how to compute $\E_{v_1,\dots,v_i}(v)$ using the precomputed values $f(p,k,\ell)$, where $p$ is the number of neighbors of $v$ in $V \setminus V_i$.
By definition, every relevant permutation begins with $v_1,\dots,v_i$ in the first $i$ positions, followed by a permutation of the remaining vertices in $V \setminus V_i$.
Thus, there are $|\mathcal{S}_{V \setminus V_i}|$ relevant permutations in total.
In exactly $\sum_{k=0}^p f(p,k,\ell)$ of these permutations, vertex $v$ has right-degree $\ell$.
Therefore, the value $\E_{v_1,\dots,v_i}(v)$ is the weighted average of the expressions $(d(v) - \ell)\ell$, where each weight is $\sum_{k=0}^p f(p,k,\ell)$, that is,
\[
  \E_{v_1,\dots,v_i}(v)=\frac{1}{|\mathcal{S}_{V\setminus V_i}|}\sum_{\ell=0}^{d(v, V\setminus V_i)}\sum_{k=0}^p f(p,k,\ell) (d(v)-\ell) \ell
\]
Since all terms involved can be computed in polynomial time, it follows that the derandomized algorithm also runs in polynomial time.
\end{proof}

\begin{remark}
In the case of simple graphs, the vertex $v$ is succeeded by exactly $\ell$ of its neighbors in precisely
\[
  \frac{|\mathcal{S}_{V \setminus V_i}|}{d(v, V \setminus V_i) + 1}
\]
of the relevant permutations for each $\ell \in \{0, \dots, d(v, V \setminus V_i)\}$.
Therefore, we have
\begin{align*}
  \E_{v_1,\dots,v_i}(v) &= \frac{1}{d(v,V\setminus V_i)+1}\sum_{\ell=0}^{d(v,V\setminus V_i)}(d(v)-\ell)\ell\\
  % =\frac{d(v)\sum_{\ell=0}^{d(v,V\setminus V_i)}\ell}{d(v,V\setminus V_i)+1}-\frac{\sum_{\ell=0}^{d(v,V\setminus V_i)}\ell^2}{d(v,V\setminus V_i)+1}\\
  &=\frac{d(v)}{d(v,V\setminus V_i)+1} \sum_{\ell=0}^{d(v,V\setminus V_i)}\ell - \frac{1}{d(v,V\setminus V_i)+1} \sum_{\ell=0}^{d(v,V\setminus V_i)}\ell^2\\
  &=\frac{d(v)d(v,V\setminus V_i)}{2}-\frac{d(v,V\setminus V_i)(2d(v,V\setminus V_i)+1)}{6}\\
  &=\frac{d(v,V\setminus V_i)(3d(v)-2d(v,V\setminus V_i)-1)}{6}.
\end{align*}
Hence, for simple graphs, the value $\E_{v_1,\dots,v_i}(v)$ can be computed directly using this closed-form expression, avoiding the need for dynamic programming.
$\bullet$
\end{remark}

In the rest of this section, we prove that our algorithm produces a $3$\nobreakdash-approximate order.

\begin{theorem}
The derandomized algorithm yields a $3$\nobreakdash-approximation for $\max \sum_{v \in V} \cev{d}(v)\vec{d}(v)$.
\end{theorem}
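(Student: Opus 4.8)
The plan is to use the method of conditional expectations together with the analysis already established in Theorem~\ref{thm:random3Approx}. The key observation is that the derandomized algorithm always maintains a prefix $v_1,\dots,v_i$ whose conditional expectation $\E_{v_1,\dots,v_i}$ is at least the unconditional expectation $\sum_{v\in V}\E(v)$ of a uniformly random permutation. Since the latter is, by Theorem~\ref{thm:random3Approx}, at least $\frac{1}{3}\operatorname{opt}$, where $\operatorname{opt}$ denotes the optimal objective value, this will immediately yield the desired approximation guarantee once the prefix is complete.

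First I would record the base case: with $i=0$ (the empty prefix), $\E_{\emptyset}$ is exactly the expected objective value of a uniformly random permutation, which equals $\sum_{v\in V}\E(v)$. By the proof of Theorem~\ref{thm:random3Approx}, this value is at least $\frac{1}{3}\operatorname{opt}$. Next I would verify the crucial monotonicity step. At the $i^{\text{th}}$ step, the algorithm selects a vertex $v$ maximizing $\E_{v_1,\dots,v_{i-1},v}$ among all remaining vertices. The key identity is that $\E_{v_1,\dots,v_{i-1}}$ is the average of the values $\E_{v_1,\dots,v_{i-1},v}$ over all $v \in V \setminus \{v_1,\dots,v_{i-1}\}$, since conditioning on the prefix $v_1,\dots,v_{i-1}$ and then choosing the vertex in position $i$ uniformly at random reproduces the uniform distribution over relevant permutations. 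Because the maximum of a finite set is at least its average, the chosen vertex satisfies $\E_{v_1,\dots,v_{i-1},v} \geq \E_{v_1,\dots,v_{i-1}}$. Hence the sequence of conditional expectations is non-decreasing along the execution of the algorithm.

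Finally I would close the argument by induction: starting from $\E_{\emptyset} \geq \frac{1}{3}\operatorname{opt}$ and applying the monotonicity step $n$ times, we obtain $\E_{v_1,\dots,v_n} \geq \E_{\emptyset} \geq \frac{1}{3}\operatorname{opt}$. But once all $n$ vertices are fixed, there is exactly one relevant permutation, so $\E_{v_1,\dots,v_n}$ equals the actual objective value $\sum_{v\in V}\cev{d}(v)\vec{d}(v)$ of the order produced by the algorithm. Combining these gives $\sum_{v\in V}\cev{d}(v)\vec{d}(v) \geq \frac{1}{3}\operatorname{opt}$, which is precisely the claimed $3$-approximation. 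I do not anticipate a genuine obstacle here, as the structure is a textbook conditional-expectations derandomization; the only point requiring care is the averaging identity, where one must confirm that the number of relevant permutations extending $v_1,\dots,v_{i-1}$ factors correctly as $(n-i+1)$ times the number extending each $v_1,\dots,v_{i-1},v$, so that the arithmetic mean over candidate vertices indeed recovers $\E_{v_1,\dots,v_{i-1}}$.
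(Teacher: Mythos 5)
Your proposal is correct and follows essentially the same route as the paper: the base case via Theorem~\ref{thm:random3Approx}, the averaging identity $\E_{v_1,\dots,v_i} = \frac{1}{n-i}\sum_{u \in V \setminus V_i}\E_{v_1,\dots,v_i,u}$ (with the same factorial bookkeeping you flag as the point requiring care), and the max-at-least-average step chained by induction. The only cosmetic difference is that you explicitly note that $\E_{v_1,\dots,v_n}$ equals the algorithm's actual objective value, which the paper leaves implicit.
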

\begin{proof}
Let $\operatorname{opt}$ denote the value of an optimal solution.
Consider the $i^{\text{th}}$ step of the algorithm, where the first $i$ vertices $v_1, \dots, v_i$ have already been fixed.
Let $V_i = \{v_1, \dots, v_i\}$.
We prove by induction on $i$ that $\E_{v_1, \dots, v_i} \geq \frac{1}{3} \operatorname{opt}$ for all $i \in \{0, 1, \dots, n\}$.
That is, fixing the first $i$ vertices and randomly permuting the remaining ones gives, in expectation, a $3$\nobreakdash-approximation of the~optimum.

The base case $i = 0$ holds by Theorem~\ref{thm:random3Approx}, which states that a uniformly random permutation yields a $3$\nobreakdash-approximation in expectation.

Assume the claim holds for some $i < n$, i.e., $\E_{v_1, \dots, v_i} \geq \frac{1}{3} \operatorname{opt}$.
We show it also holds for $(i+1)$.
From the definition of $\E_{v_1,\dots,v_i}$ and simple rearrangements, we have
\begin{align*}
\E_{v_1,\dots,v_i}
&= \frac{1}{(n - i)!} \sum_{\substack{\sigma \in \mathcal{S}_V \\ \sigma_j = v_j\ \forall j \leq i}} \sum_{v \in V} \cev{d}_\sigma(v)\vec{d}_\sigma(v) \\
&= \frac{1}{(n - i)!} \sum_{u \in V \setminus V_i} \sum_{\substack{\sigma \in \mathcal{S}_V \\ \sigma_j = v_j\ \forall j \leq i,\\ \sigma_{i+1} = u}} \sum_{v \in V} \cev{d}_\sigma(v)\vec{d}_\sigma(v) \\
&= \frac{1}{(n - i)!} \sum_{u \in V \setminus V_i} (n - i - 1)! \cdot \E_{v_1,\dots,v_i,u} \\
&= \frac{1}{n - i} \sum_{u \in V \setminus V_i} \E_{v_1,\dots,v_i,u}.
\end{align*}
Since the algorithm selects $v_{i+1} \in V \setminus V_i$ to maximize $\E_{v_1,\dots,v_i,u}$, it follows that $\E_{v_1,\dots,v_i,v_{i+1}} \geq \E_{v_1,\dots,v_i}$.
By the inductive hypothesis, we then have $\E_{v_1,\dots,v_{i+1}} \geq \E_{v_1,\dots,v_i} \geq \frac{1}{3} \operatorname{opt}$.
This completes the induction and proves the theorem.
\end{proof}

\section{Open questions}
One of our main results, established in Section~\ref{sec:minSumH}, states that minimizing $\sum_{v \in V} \varphi(\cev{d}(v))$ is NP-hard for every discrete strictly convex function $\varphi: \Z_+ \to \R$ when parallel edges are allowed.
The complexity of this problem in the case of \emph{simple} graphs, however, remains open.

In Section~\ref{sec:decMinIncMax}, we presented a detailed analysis of the computational complexity of various lexicographically optimal vertex ordering problems.
We proved that both the dec-min and inc-max $k$\nobreakdash-bounded ordering problems are NP-hard for every $k \geq 2$.

This result also implies the NP-hardness of the dec-min problem for $k$\nobreakdash-degenerate graphs with $k \geq 2$.
However, the complexity of the inc-max problem for $k$\nobreakdash-degenerate graphs remains unresolved.

For the special case $\varphi(z) = z^2$, we extended the NP-hardness proof to simple graphs.
Moreover, we analyzed a natural greedy algorithm for this setting.
It is still an open question whether this greedy algorithm achieves a constant-factor approximation.
Among the graph instances we examined, the worst-case approximation ratio tends to $\frac{9}{7}$.

\section*{Acknowledgment}
We are grateful to Andr\'as Frank for insightful discussions and for drawing our attention to relevant literature.

\medskip
This research has been implemented with the support provided by the Ministry of Innovation and Technology of Hungary from the National Research, Development and Innovation Fund, financed under the ELTE TKP 2021-NKTA-62 funding scheme, and by the Ministry of Innovation and Technology NRDI Office within the framework of the Artificial Intelligence National Laboratory Program, by the Lend\"ulet Programme of the Hungarian Academy of Sciences --- grant number LP2021-1/2021.
The first author was supported by the Ministry of Innovation and Technology of Hungary from the National Research, Development and Innovation Fund --- grant number ADVANCED 150556.
The second author was supported by the EK\" OP-24 University Excellence Scholarship Program of the Ministry for Culture and Innovation from the source of the National Research, Development and Innovation Fund.

\bibliographystyle{plain}
\bibliography{bibliography}
\end{document}